\def\wl{\par \vspace{\baselineskip}}
\def\dfrac#1#2{\lower0.15ex\hbox{\large$\frac{#1}{#2}$}}
\let\originalleft\left
\let\originalright\right
\renewcommand{\left}{\mathopen{}\mathclose\bgroup\originalleft}
\renewcommand{\right}{\aftergroup\egroup\originalright}
  \newtheorem{theorem}{Theorem}[section]
  \newtheorem{lemma}[theorem]{Lemma}
  \newtheorem{cor}[theorem]{Corollary}
  \theoremstyle{definition}
  \newtheorem{problem}[theorem]{Problem}
\begin{document}

\title{Eulerian circuits and path decompositions \\in quartic planar graphs}
\author{Jane Tan\footnote{Supported by the Australian Research Council, Discovery Project DP140101519.}\\
 \small Mathematical Sciences Institute, Australian National University\\[-0.6ex]
 \small Canberra, ACT 2601, Australia\\[-0.6ex]
 \small \texttt{jane.tan@maths.ox.ac.uk}
}
\date{}
\maketitle

\begin{abstract}
A subcycle of an Eulerian circuit is a sequence of edges that are consecutive in the circuit and form a cycle. We characterise the quartic planar graphs that admit Eulerian circuits avoiding 3-cycles and 4-cycles. From this, it follows that a quartic planar graph of order $n$ can be decomposed into $k_1+k_2+k_3+k_4$ many paths with $k_i$ copies of $P_{i+1}$, the path with $i$ edges, if and only if $k_1+2k_2+3k_3+4k_4 = 2n$. In particular, every connected quartic planar graph of even order admits a $P_5$-decomposition.
\end{abstract}

\section{Introduction}
The graphs in this paper are finite and simple. An \emph{edge-decomposition} of a graph $G$ is a collection of edge-disjoint subgraphs whose union is $G$. If each subgraph is isomorphic to $P_k$, the path on $k$ vertices, then we call this a $P_k$-decomposition. Every graph has a $P_2$-decomposition, and it is a classical result that a graph $G$ has a $P_3$-decomposition if and only if $G$ has an even number of edges. Kotzig~\cite{kotzigeven} showed that a cubic graph admits a $P_4$-decomposition if and only if it has a perfect matching (see also \cite{BF83}), and went on to ask for necessary and sufficient conditions for a $d$-regular simple graph to admit a $P_{d+1}$-decomposition when $d>3$ is odd (see~\cite{FGK10}). Bondy~\cite{bondypath} later posed a more general question: 

\begin{problem}\label{prpath}
Which simple graphs admit a $P_k$-decomposition?
\end{problem}

There is no complete answer for any $k\geq 4$. For Eulerian graphs, one way to prove the existence of a $P_k$-decomposition is to consider a strictly stronger problem which is also of independent interest. Let $C= x_1x_2\ldots x_n$ be an Eulerian circuit in a simple graph. A \emph{subcycle} of $C$ is a subpath $x_ix_{i+1}\ldots x_j$ of $C$ such that $x_i=x_j$ and the vertices $x_i, x_{i+1},\ldots, x_{j-1}$ are all distinct. We call an Eulerian circuit or trail \emph{$k$-locally self-avoiding} for some positive integer $k$ if it contains no subcycles of length at most $k$. When $k=3$, the term \emph{triangle-free} is commonly used instead. The corresponding question is then:
\begin{problem}\label{prcircuit}
Which simple graphs admit a $k$-locally self-avoiding Eulerian circuit?
\end{problem}

Adelgren~\cite{Ade95} and Heinrich, Liu and Yu~\cite[Theorem 3.1]{HLY99} independently characterised the Eulerian graphs of maximum degree 4 that admit triangle-free Eulerian circuits. Using this characterisation, Heinrich, Liu and Yu prove that a connected quartic graph on $3n$ vertices admits a $P_4$-decomposition if and only if it has a triangle-free Eulerian circuit.  In addition, Oksimets~\cite{oksphd} showed that every Eulerian graph with minimum degree at least 6 admits a triangle-free Eulerian circuit. For results on Eulerian circuits avoiding longer subcycles in concrete classes of graphs, the literature is limited to complete and complete bipartite graphs. This includes the work of Jimbo~\cite{Jim2}, Ram\'irez-Alfons\'in~\cite{ramirez}, and Oksimets~\cite{Oks05} who also gave the precise $k$ for which $P_k$-decompositions exist in these cases. 

In this paper, we characterise the connected quartic planar graphs that admit 4-locally self-avoiding Eulerian circuits, and hence show that a connected quartic planar graph has a $P_5$-decomposition if and only if it has even order. The main theorems together with proof outlines are presented in Section~\ref{sec:overview}. The induction arguments used by Adelgren and Heinrich, Liu, and Yu to obtain their characterisations do not extend directly to 4-locally self-avoiding Eulerian circuits. The main barrier is that they quickly lead to an impractical amount of casework, even when planarity is assumed. We also use induction, but it is made feasible by applying a recursive generation theorem to simplify and organise the cases.

From the edge-decomposition perspective, the existence of $P_5$-decompositions has previously been shown for all graphs with $4e$ edges and edge-connectivity at least $10^{10^{10^{14}}}$ by Thomassen~\cite{thomassen08}, and for 8-regular graphs by Botler and Talon~\cite{BT17}. For longer paths, there are further results for graphs that are already free of certain short subcycles or have high degrees such as those in~\cite{BMW15, BMOW17, FGK10}. 

Problem~\ref{prpath} is also related to the Bar\'at-Thomassen conjecture~\cite{BT06}, which was recently proved by Bensmail, Harutyunyan, Le, Merker, and Thomass\'e~\cite{BHLMT16}. The path-case of this conjecture, first verified by Botler, Mota, Oshiro, and Wakabayashi~\cite{BMOW17_B}, says that for each $k$, there is some $c_k$ such that every $c_k$-edge-connected graph $G$ for which $k$ divides $|E(G)|$ admits a $P_{k+1}$-decomposition. This is illustrated, for example, by Thomassen's theorem mentioned above. The known edge-connectivity is challenging to decrease (see~\cite{thomassen13}). However, Bensmail, Harutyunyan, Le, and Thomass\'e~\cite{BHLT17} showed that there is some trade-off possible between high edge-connectivity and high minimum degree.

There is a similar conjecture for locally self-avoiding Eulerian circuits. H\"aggkvist and Kriesell independently conjectured that for every positive integer $k$, there is an integer $d_k$ such that every Eulerian graph with minimum degree at least $d_k$ admits a $k$-locally self-avoiding Eulerian circuit. This was recently confirmed by Le~\cite{Le19}. The minimum such values $d_k$ are unknown except for $d_3$, which is 6 by an aforementioned theorem of Oksimets~\cite{oksphd}.

In terms of progress on Problem~\ref{prpath} and Problem~\ref{prcircuit}, relatively little is known for graphs that are allowed to have modest degrees and low connectivity. We are not aware of any previous results concerning $k$-locally self-avoiding Eulerian circuits in graph classes of this description beyond $k=3$.

\section{Overview and notation}\label{sec:overview}
A \emph{trail} of length $\ell$ in a simple graph $G$ is a sequence of vertices $v_0v_1\ldots v_\ell$ such that $v_{i-1}v_{i}\in E(G)$ for each $1\leq i\leq \ell$ and there are no repeated edges. A trail is \emph{closed} if $v_\ell = v_0$ in which case it is called a $\emph{circuit}$, and \emph{Eulerian} if it traverses every edge of the graph. A \emph{path} is a trail in which no vertices are repeated. As a shorthand, we call an Eulerian trail or circuit \emph{good} if it is $4$-locally self-avoiding, and define a \emph{short} cycle to be one of length 3 or 4. For other terminology, we follow~\cite{bondymurty}.

Although our graphs are undirected, it is notationally convenient to pick a direction of traversal for trails. To this end, we write a trail $X=v_0v_1\ldots v_\ell$ as $v_0X'v_\ell$ where $X'=v_1\ldots v_{\ell-1}$ to indicate that we are starting at $v_0$ and ending at $v_\ell$. The reverse path $v_\ell v_{\ell-1}\ldots v_0$ is denoted by $X^{-1} = v_\ell {X'}^{-1}v_0$. For a vertex $v_i$ in $X$, we can \emph{split} $X$ at $v_i$ into segments $A$ and $B$ by writing $X=v_0Av_iBv_\ell$ so that $A=v_1\ldots v_{i-1}$ and $B=v_{i+1}\ldots v_\ell$. Although formally segments are just subtrails, it is useful to think of them as including the edges on either end. In this way, it is possible for a segment to have no vertices. For instance, if $X=v_0v_1$, then we could still write $X=v_0Av_1$ and in this case call $A$ an edge. We use $x_{-1}$ and $x_{-2}$ to denote the last and second last vertices in $X$ respectively. 

The main result of this paper is:
\begin{theorem}\label{thm:connected}
A quartic planar graph has a 4-locally self-avoiding Eulerian circuit if and only if it is connected and does not contain the graph $F_6$ (the complement of $P_2 \cup P_4$, shown in Figure~\ref{fig:obstructions}) as a subgraph.
\end{theorem}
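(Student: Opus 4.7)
Connectedness is immediate, since no graph with more than one non-trivial component admits an Eulerian circuit. For the forbidden subgraph condition, suppose $G$ contains $F_6$ with vertex set $\{a,b,c,d,e,f\}$ labelled so that the non-edges among them are $ab$, $cd$, $de$, and $ef$. Then $a,b,c,f$ each have degree $4$ in $F_6$, and since $G$ is quartic, all of their incident edges in $G$ lie inside $F_6$. Any Eulerian circuit of $G$ determines a transition system, that is, a pairing of the four half-edges at each vertex into two transitions; I would enumerate the three possible pairings at each of $a,b,c,f$ and verify, by finite case analysis exploiting the dense triangle structure of $F_6$ (for instance the triangles $ace$, $acf$, $adf$, $bcf$), that every combination of local transitions forces a subcycle of length at most $4$ to be confined entirely to $F_6$.

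\textbf{Sufficiency via recursive generation.} For the converse direction I would apply a recursive generation theorem for the class of connected quartic planar graphs. Such a theorem expresses every such $G$ as obtainable from a small list of base graphs through a short list of local expansion operations: for example, introducing a new degree-$4$ vertex across two selected edges of a face, or replacing a face by a gadget preserving $4$-regularity and planarity. The argument then proceeds by induction on $|V(G)|$, with the base cases handled by exhibiting a good Eulerian circuit directly in each base graph. In each inductive step I would begin with a good Eulerian circuit of the smaller graph and reroute it locally through the newly inserted vertex so that the resulting circuit in $G$ is again good. Correspondingly, the $F_6$-free hypothesis must be tracked along reductions: either each reduction preserves $F_6$-freeness, or the exceptional cases where an $F_6$ appears only in the reduced graph are handled separately.

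\textbf{Main obstacle.} The delicate point is the surgery in the inductive case. When a new degree-$4$ vertex $w$ is introduced at an expansion site, there are three possible transition pairings at $w$; combined with optional swap operations that reverse a segment of the Eulerian circuit between two transitions elsewhere, this gives a finite toolbox of local modifications. The proof must show that some combination of these modifications produces no triangle or $4$-cycle as a subcycle of the new circuit. The worst configurations are those in which the local neighbourhood of the expansion site contains many short cycles and already resembles $F_6$, so that only a very restricted set of transitions is safe. The crucial point, which is where the $F_6$-free hypothesis earns its keep, is that the only configuration in which every admissible modification introduces a short subcycle is precisely when $G$ itself contains $F_6$. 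Planarity keeps this local casework bounded and finite; I expect the most painful subcases to be those in which the expansion happens inside a dense cluster of triangles, so that a purely local retransition at $w$ is insufficient and a non-local swap along a longer segment of the circuit is required to eliminate the offending short subcycle.
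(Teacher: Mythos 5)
Your necessity argument is essentially the same as the paper's: the paper dismisses it as routine, and your sketch of a finite case analysis on transition systems at the degree-4 vertices of $F_6$ is the right (and entirely adequate) idea.

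The sufficiency direction, however, has a genuine gap. You propose to apply a recursive generation theorem for the class of \emph{connected} quartic planar graphs and induct directly. No such theorem is cited, and I am not aware that a clean one exists; the generation theorems the paper relies on (due to Brinkmann et al.\ and Broersma--Duivestijn--G\"obel) apply only to the \emph{3-connected} quartic planar graphs. The paper proves the 3-connected case by recursive generation, and there indeed the work is exactly the local-rerouting casework you describe, aided by the unique embedding on the sphere that 3-connectedness buys. But once you drop 3-connectedness you lose that unique embedding, the class of expansion operations would change, and the local casework at an expansion site could no longer be controlled in the same way. The paper instead uses a ladder of reductions through connectivity levels: first 3-connected (recursive generation), then 3-edge-connected (induction on the number of type-(a) 2-vertex-cuts), then 2-vertex-connected (induction on 2-edge-cuts), and finally arbitrary connected (induction on cutvertices), each step built on cutting the graph along a small cut, completing each side into a smaller quartic planar graph, applying the previous level's theorem, and stitching the resulting trails back together via Lemma~\ref{lengthlemma}.

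Relatedly, your claim that ``the only configuration in which every admissible modification introduces a short subcycle is precisely when $G$ itself contains $F_6$'' does not reflect where $F_6$ actually bites. In the paper, the only place $F_6$ matters is the 2-edge-cut reduction: the auxiliary graph formed from one side of a 2-edge-cut might acquire a copy of $F_6$ even when $G$ is $F_6$-free, and this happens precisely when a side contains a maximal 3-edge-connected subgraph from a small list $\mathcal{C}$ of ``near-misses'' ($F_6-e$, $G_7$, $G_7-e$, $K_4$). The paper handles these by exhibiting explicit good trail pairs through each member of $\mathcal{C}$. Your sketch omits this entirely, and it is not something that falls out of a straight recursive-generation induction: it is a side effect of how the reduction from 2-connected to 3-edge-connected is engineered. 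To repair the proposal you would need either to supply a generation theorem for connected quartic planar graphs and then rebuild this $F_6$-tracking from scratch inside the rerouting casework, or (as the paper does) accept the layered connectivity reduction and prove the 3-connected case by generation.
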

\begin{figure}[ht]
	\small
	\centering
	\unitlength=1cm
	 \scalebox{0.9}{
\begin{picture}(9,3.6)(0,0)
  \put(0,0.7){\includegraphics[scale=0.5]{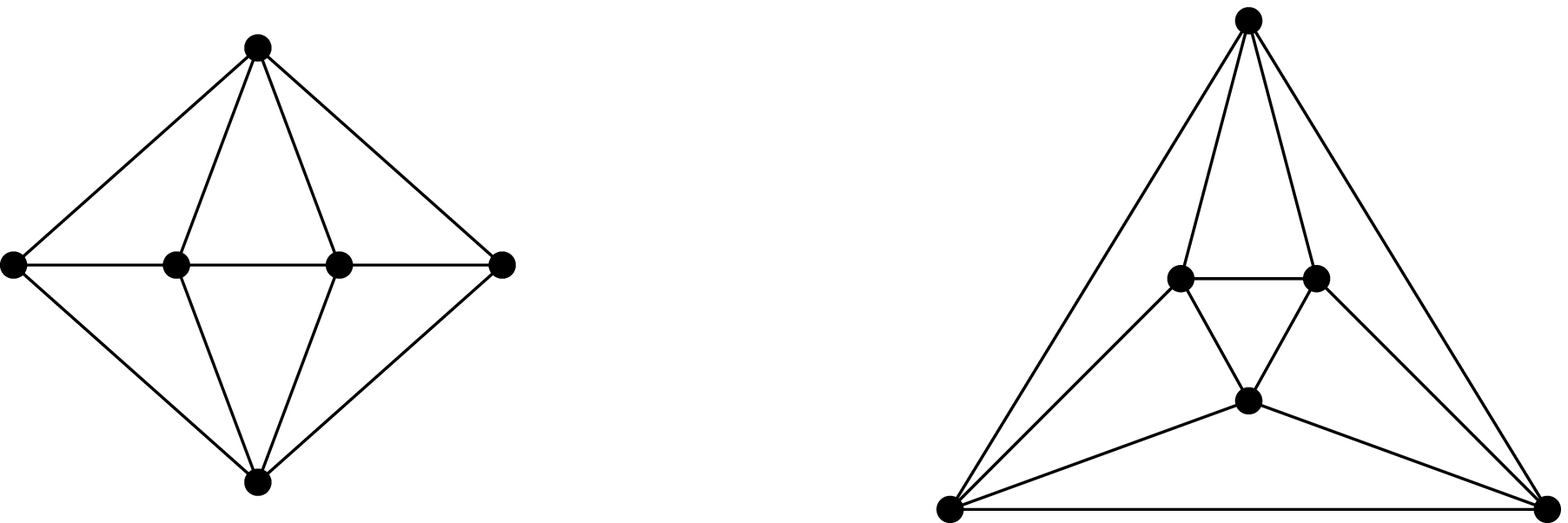}}
  \put(6.35,0){Octahedron}
  \put(7.2,3.85){$a$}
  \put(5.25,0.5){$x$}
  \put(9.1,0.5){$y$}
  \put(6.6,2.15){$b$}
  \put(7.8,2.15){$d$}
  \put(7.2,1.0){$c$}
  \put(1.3,0){$F_6$}
  \put(-0.35,2.15){$x$}
  \put(1.45,3.7){$a$}
  \put(0.8,2.35){$b$}
  \put(1.45,0.55){$c$}
  \put(2.1,2.35){$d$}
  \put(3.15,2.15){$y$}
\end{picture}}
\caption{The graph $F_6$ is an obstruction to 4-locally self-avoiding Eulerian circuits. The octahedron is the only quartic graph which contains $F_6$ as a non-induced subgraph.}
\label{fig:obstructions}
\end{figure}

The proof will be presented over the next three sections. We first show that all 3-connected quartic planar graphs except the octahedron admit 4-locally self-avoiding Eulerian circuits (Theorem~\ref{thm:no34_original}) in Section~\ref{sec:3con}. The proof of Theorem~\ref{thm:no34_original} is by induction, using a recursive graph generation theorem. Broadly, such theorems give a method of constructing every graph in a particular class from a set of starting graphs by a sequence of local expansion operations. The content of the proof is to show that each expansion operation preserves the property of admitting a good Eulerian circuit.

Section~\ref{sec:weaken} is devoted to removing the 3-connectedness condition. For this, it is helpful to classify small minimal cuts in quartic planar graphs based on the number of edges going from the cut into each ``side". Note that \emph{cut} should be read as vertex-cut by default, and is used in contexts when edge-cuts are not considered. More precisely, given a minimal vertex-cut $C$ in a quartic graph $G$, the \emph{sides associated with $C$} are two subgraphs of $G$, say $A$ and $B$, such that $V(A) \cap V(B) = C$, $A\cup B = G$, and every edge in $G$ either has both ends in $A$ or both ends in $B$. Similarly, if $C$ is a minimal edge-cut, then its sides are subgraphs $A$ and $B$ such that $\{E(A), E(B), C\}$ is a partition of $E(G)$, and every edge not in $C$ either has both ends in $A$ or both ends in $B$. 

For parity reasons, the sides of a cutvertex must each contain two neighbours, and similarly there are four possibilities for minimal 2-vertex-cuts. These are all shown in Figure~\ref{fig:allcuts}, in which only the vertices in the cut and their incident half-edges are drawn. We say that a vertex-cut $C$ is of type (a) through (e) if its sides contain the number of half-edges incident to each vertex of $C$ as the representatives shown in the figure. The sides of a cut are not necessarily unique, so it is possible for a cut to be of more than one type although this is not a concern in our proofs.

\begin{figure}[ht]
	\small
	\centering
	\unitlength=1cm
	 \scalebox{0.9}{
\begin{picture}(13,2.5)(0,0)
  \put(0,0.5){\includegraphics[scale=0.5]{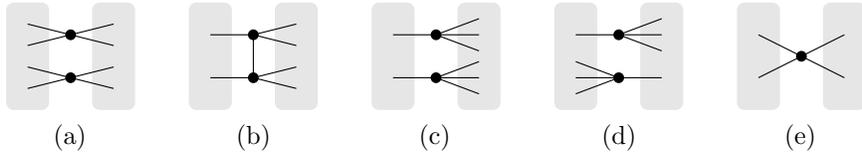}}
  \put(0.7,0){(a)}
  \put(3.4,0){(b)}
  \put(6.1,0){(c)}
  \put(8.8,0){(d)}
  \put(11.5,0){(e)}
\end{picture}}
\caption{All 1-vertex-cut and 2-vertex-cut types in quartic planar graphs. Shaded regions represent the sides excluding the cut itself.}
\label{fig:allcuts}
\end{figure}

To extend Theorem~\ref{thm:no34_original}, the first step is to relax 3-connectedness to 3-edge-connectedness (Section~\ref{sec:3edgecon}). Given 3-edge-connectedness by inspecting Figure~\ref{fig:allcuts} we observe that the only type of $2$-vertex-cut that does not imply the existence of a 2-edge-cut is type (a). Thus, we use induction on the number of 2-vertex-cuts of type (a). We then move from 3-edge-connectedness to 2-vertex-connectedness (Section~\ref{sec:2con}) by introducing the remaining types of 2-vertex-cuts, namely (b), (c) and (d). We have already observed that each of these implies the existence of a 2-edge-cut, and equivalently this can be phrased as induction on the number of 2-edge-cuts. Any graph that contains $F_6$ as an induced subgraph must have a 2-edge-cut, so it is in this step that we detect the obstruction. Finally, the main theorem is proved by induction on the number of cutvertices, which are all of type (e) (Section~\ref{sec:1con}).

The corresponding path-decomposition corollary is derived in Section~\ref{sec:cutitup}. Namely,
\begin{theorem}\label{thm:p5decomp}
A connected quartic planar graph has a $P_5$-decomposition if and only if it has even order.
\end{theorem}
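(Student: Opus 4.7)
The plan is to prove necessity by a divisibility argument and sufficiency by a case split around Theorem~\ref{thm:connected}. For necessity, each $P_5$ has four edges and $G$ has $2n$ edges, so any $P_5$-decomposition comprises $n/2$ paths, forcing $n$ to be even.

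For sufficiency, let $G$ be a connected quartic planar graph of even order $n$, and first suppose $G$ does not contain $F_6$ as a subgraph. Then Theorem~\ref{thm:connected} supplies a 4-locally self-avoiding Eulerian circuit $C = v_0 v_1 \cdots v_{2n}$ with $v_{2n} = v_0$. Since $n$ is even, $2n \equiv 0 \pmod{4}$, and I slice $C$ into $n/2$ consecutive segments $Q_i = v_{4i} v_{4i+1} v_{4i+2} v_{4i+3} v_{4i+4}$ for $0 \leq i < n/2$. A coincidence $v_{4i+k} = v_{4i+k'}$ with $0 \leq k < k' \leq 4$ would produce a subcycle of length $k' - k \in \{2,3,4\}$, which is forbidden, so each $Q_i$ is a genuine $P_5$.

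Now suppose instead $G$ contains $F_6$ as a subgraph. By the remark accompanying Figure~\ref{fig:obstructions}, either $G$ is the octahedron or $F_6$ is an induced subgraph of $G$. For the octahedron I would exhibit an explicit $P_5$-decomposition using the labelling in Figure~\ref{fig:obstructions}; for instance the three paths $abxdy$, $axcyb$, $yadcb$ cover all twelve edges. When $F_6$ is induced, let $H$ denote the induced copy, with degree-$3$ vertices $b, c$; the only edges leaving $V(H)$ are $bb'$ and $cc'$ for some external neighbours $b', c'$, and together they form a 2-edge-cut. I would pass to the reduced graph $G^\ast = (G - V(H)) + b'c'$ in the generic case $b' \neq c'$, which is smaller, connected, quartic, planar, and of even order $n - 6$; apply induction on $n$ to obtain a $P_5$-decomposition $\mathcal{D}^\ast$ of $G^\ast$; and then perform local surgery on the single $P_5$ of $\mathcal{D}^\ast$ containing $b'c'$, replacing it by four $P_5$'s of $G$ that cover the eleven edges of $H$, the cross edges $bb', cc'$, and the three surviving edges of the surgered path. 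The scaffold for the surgery is any Eulerian trail in $H$ from $b$ to $c$, which exists because $b, c$ are the only odd-degree vertices of $H$.

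The main obstacle is this surgery. The edge $b'c'$ may sit in any of four positions inside its $P_5$, the neighbourhood structure around $b', c'$ in $G - V(H)$ is unconstrained, and the four replacement $P_5$'s must simultaneously be genuine paths in $G$ that together use exactly the required sixteen edges. I expect the freedom in choosing the Eulerian trail through $H$ to give enough flexibility, but the case analysis over the position of $b'c'$ and the local structure at $b', c'$ is the delicate part. A subsidiary technicality is the degenerate case $b' = c'$, where the shared neighbour is a cutvertex of type (e) in Figure~\ref{fig:allcuts} and the edge-addition reduction fails on parity grounds; there I would instead split $G$ at the cutvertex and run a local decomposition argument on each side.
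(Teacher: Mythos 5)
Your necessity argument and the $F_6$-free case of sufficiency are correct and match the paper. But the route you take through graphs containing $F_6$ is different from the paper's and, as written, has a genuine gap at exactly the point you flag as ``the main obstacle.'' The surgery step is not a routine detail: every $(x,y)$-Eulerian trail through $F_6$ necessarily contains a subcycle of length at most 4 (that is precisely why $F_6$ is the obstruction in Theorem~\ref{thm:connected}), so an arbitrary such trail, when spliced into the surgered $P_5$ and cut at positions $4, 8, 12$, will in general produce non-paths. What is actually required is the content of the paper's Lemma~\ref{lem:f6path}: for \emph{every} length sequence $L$ with entries in $\{1,2,3,4\}$ summing to $11$, $F_6$ admits a $P_{(L,x)}$-decomposition. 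Your surgery needs this for $L = (3,4,4)$, $(2,4,4,1)$, $(1,4,4,2)$, or $(4,4,3)$ depending on the position of $b'c'$ in the surgered path, and the lemma is proved by exhibiting two carefully chosen trails through $F_6$ and checking that one of them always works. Hoping that ``freedom in choosing the Eulerian trail'' suffices does not establish this.

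There are secondary problems as well. If $b'c'\in E(G)$ then $G^\ast = (G-V(H))+b'c'$ has a parallel edge, so it is not simple and the inductive hypothesis does not apply; you handle $b'=c'$ but not this case (a contraction as in case (b) of Section~\ref{sec:2con} would be the natural fix). The explicit octahedron decomposition is also wrong with the paper's labelling, where $xd$, $yb$, and $ac$ are the non-edges: $abxdy$ and $axcyb$ are not paths. Finally, note that the paper organises things the other way around: rather than inducting on $|V(G)|$ and surgering one $F_6$ at a time, it replaces all copies of $F_6$ by larger $F_6$-free gadgets at once, applies Theorem~\ref{thm:connected} to the resulting graph $G'$ to get a single good circuit $C'$, and then cuts $C'$ while detouring through each $F_6$ using Lemma~\ref{lem:f6path}. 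This sidesteps the simplicity and connectivity bookkeeping that your reduction requires, and it yields the stronger Corollary (arbitrary length vectors with entries in $\{1,\dots,4\}$), of which Theorem~\ref{thm:p5decomp} is the special case $L=(4,4,\dots,4)$.
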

It is unsurprising that $F_6$ is no longer an obstruction since it can be decomposed into two copies of $P_5$ and one copy of $P_4$. In fact, we prove the stronger statement that each quartic planar graph of order $n$ has a decomposition into $k_1+k_2+k_3+k_4$ many paths with $k_i$ copies of $P_{i+1}$, the path of length $i$, if and only if $k_1+2k_2+3k_3+4k_4 = 2n$. That is, the clearly necessary divisibility condition is also sufficient.

\section{The 3-connected case}\label{sec:3con}
The goal of this section is to prove the following:
\begin{theorem}\label{thm:no34_original}
Every 3-connected quartic planar graph except the octahedron has a $4$-locally self-avoiding Eulerian circuit. 
\end{theorem}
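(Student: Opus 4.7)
The plan is to prove Theorem~\ref{thm:no34_original} by induction using a recursive generation theorem for 3-connected quartic planar graphs. Such theorems express each graph in this class as obtainable from a small family of base graphs (with the octahedron a natural candidate to include) via a prescribed list of local expansion operations, each of which preserves 4-regularity, planarity, and 3-connectivity. After fixing a suitable generation theorem from the literature, I would dispatch the base cases by exhibiting a 4-locally self-avoiding Eulerian circuit by hand for every base graph other than the octahedron. Since the base family is finite and its members are small, this is a finite check.

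For the inductive step, let $G$ be a 3-connected quartic planar graph that is not a base graph, obtained from a smaller graph $G'$ in the same class by a single expansion $\sigma$ acting on a small local piece of $G'$. Provided $G'$ is not the octahedron, the inductive hypothesis supplies a good Eulerian circuit $C'$ of $G'$. I would then construct a good Eulerian circuit $C$ of $G$ by splicing a local traversal of the new vertices and edges introduced by $\sigma$ into $C'$, so that $C$ and $C'$ coincide outside the affected region. Ensuring $C$ has no short subcycles reduces to two verifications: that the inserted local traversal contains no short subcycle itself, and that at each point where $C$ leaves and rejoins $C'$ the combined edges do not form a $3$- or $4$-cycle with nearby edges of $C'$. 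The freedom I would exploit is the choice of entry and exit points in the expanded region, together with a possible local re-routing of $C'$ (reversing a segment, or swapping the transitions at a vertex in the sense of Section~\ref{sec:overview}) to break any short subcycle that would otherwise arise.

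The main obstacle is this join condition: a naive splicing tends to create a short subcycle exactly where the circuit crosses into the expanded region, so each expansion operation will require its own case analysis to show that some choice of entry, exit, and re-routing avoids every bad configuration. I would organise this by listing, up to the symmetries of $\sigma$, all local configurations of $C'$ on the vertices of $G'$ that are affected by $\sigma$; planarity sharply limits the number of cases, since the cyclic order of edges at each vertex is fixed by the embedding. A secondary obstacle is the octahedron exception: if the reduction of $G$ lands on the octahedron, no inductive $C'$ is available. I would handle this either by including every one-step expansion of the octahedron as an additional base case, or by showing that any such $G$ admits an alternative reduction to a non-octahedron graph, which lets the induction proceed.
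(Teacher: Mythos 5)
Your plan matches the paper's proof almost exactly: the paper invokes a recursive generation theorem (a hybrid of Brinkmann et al.\ and Broersma--Duijvestijn--G\"obel, generating from antiprisms via pegging, 4-cycle addition, and 3-cycle slides), verifies the base cases by explicit circuits, shows each expansion preserves the existence of a good circuit by splicing and local re-routing, and handles the octahedron exception precisely as your first suggestion proposes, by checking the one-step expansions of the octahedron directly (pegging is inapplicable, a 3-cycle slide yields the 4-antiprism, and only one graph arising from 4-cycle addition needs an explicit circuit). The only thing worth flagging is that your phrase about planarity ``sharply limiting'' the casework undersells the pegging operation, whose verification occupies several pages and is the reason the paper selects its particular hybrid generation theorem.
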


The proof is by induction, facilitated by the following recursive generation theorem for the class of 3-connected quartic plane graphs. It is obtained by taking the dual of Theorem~3 in \cite{quads} due to Brinkmann, Greenberg, Greenhill, McKay, Thomas and Wollan, together with the expansion operation $\overline{\phi}_{B}$ from Broersma, Duivestijn and G\"obel's earlier generation theorem for the same class (see \cite{BDG93}, Theorem 1). The reducible configurations provided by this extra operation will be an advantage to us in the proof of Lemma~\ref{pegging}.

\begin{theorem}\label{hybridgen}
The class of 3-connected quartic plane graphs can be generated from the antiprisms by pegging, 4-cycle addition, and 3-cycle slides depicted in Figure~\ref{fig:expansions}.
\end{theorem}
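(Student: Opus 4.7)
The plan is to obtain Theorem~\ref{hybridgen} by combining a planar-duality transfer of an existing generation result for quadrangulations with an extra expansion operation imported from a second generation result for the same class. Specifically, Theorem~3 of \cite{quads} recursively generates all 3-connected simple plane quadrangulations from an explicit base family via a prescribed list of vertex-splitting expansions. Planar duality gives a bijection between 3-connected simple plane quadrangulations and 3-connected quartic plane graphs: faces of length four become vertices of degree four, and 3-connectedness on either side prevents the incidences that would produce loops or multi-edges on the other. Under this bijection, the base family dualises to the antiprisms, and each vertex-split operation on a quadrangulation becomes a local face-expansion on the quartic dual. The first step is to dualise each operation in Theorem~3 of \cite{quads} one at a time and identify it, up to planar symmetry, with one of the operations in Figure~\ref{fig:expansions}.

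With the dualised statement in hand, I would then enlarge the operation list by appending the expansion $\overline{\phi}_B$ from Theorem~1 of \cite{BDG93}, which I would identify with the remaining operation depicted in Figure~\ref{fig:expansions}. Since \cite{BDG93} already generates the same class via a scheme that includes $\overline{\phi}_B$, adding this operation to the dualised list cannot shrink what is generable, so the hybrid list still generates every 3-connected quartic plane graph from the antiprisms. Adding it is not logically necessary but convenient: it widens the stock of reducible local configurations, which simplifies the casework in the proof of Lemma~\ref{pegging}.

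The main obstacle is the explicit duality bookkeeping: translating each vertex-split of \cite{quads} across the plane duality and matching its dual, after appropriate choices of orientation and face labelling, with the precise local picture for pegging, 4-cycle addition, or 3-cycle slide. This is a finite check, but care is needed to rule out degenerate cases in which a dual operation would apparently produce a non-simple or non-3-connected quartic graph; such cases are excluded by the hypotheses already imposed on the expansions in \cite{quads}. Once the correspondence is verified and the base family is confirmed to dualise to antiprisms, the theorem follows by transport along the duality and then by enlarging the operation set to include $\overline{\phi}_B$.
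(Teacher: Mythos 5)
Your proposal matches the paper's approach exactly: the theorem is stated to be obtained by dualising Theorem~3 of \cite{quads} and then appending the redundant but useful operation $\overline{\phi}_B$ from Theorem~1 of \cite{BDG93}, which is precisely your two-step plan. The paper itself does not spell out the duality bookkeeping, so your elaboration of that step is a reasonable filling-in of the same route rather than a different one.
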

\begin{figure}[h]
	\small
	\centering
\includegraphics[scale=0.55]{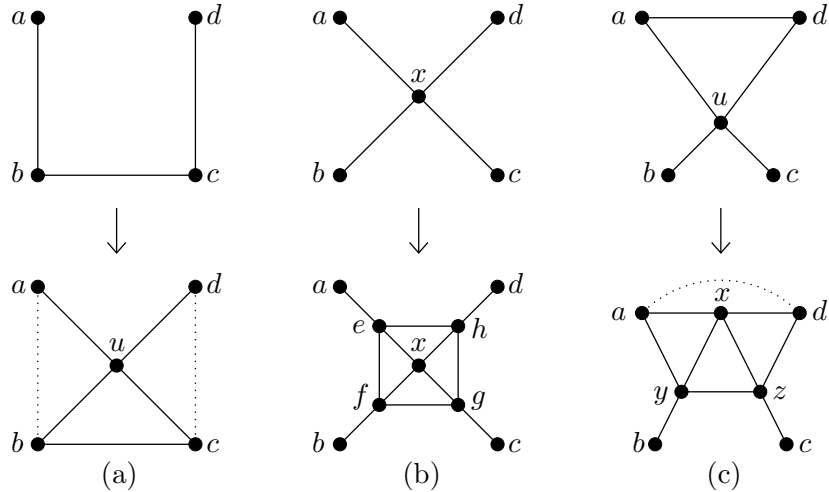}
\put(-301,162){$a$}
\put(-301,102){$b$}
\put(-301,60){$a$}
\put(-301,0){$b$}
\put(-227,162){$d$}
\put(-227,102){$c$}
\put(-227,60){$d$}
\put(-227,0){$c$}
\put(-264,38){$u$}
\put(-267,-12){(a)}
\put(-187,162){$a$}
\put(-187,102){$b$}
\put(-113,162){$d$}
\put(-113,102){$c$}
\put(-187,60){$a$}
\put(-187,0){$b$}
\put(-113,60){$d$}
\put(-113,0){$c$}
\put(-150,38){$x$}
\put(-150,140){$x$}
\put(-172,45){$e$}
\put(-172,18){$f$}
\put(-127,18){$g$}
\put(-127,43){$h$}
\put(-153,-12){(b)}
\put(-74,162){$a$}
\put(-62,102){$b$}
\put(2,162){$d$}
\put(-8,102){$c$}
\put(-36,131){$u$}
\put(-74,50){$a$}
\put(-66,0){$b$}
\put(2,50){$d$}
\put(-3,0){$c$}
\put(-35,57){$x$}
\put(-58,20){$y$}
\put(-13,20){$z$}
\put(-38,-12){(c)}
\vspace{-2mm}
\caption{From left to right, the expansion operations of pegging, 4-cycle addition and 3-cycle slide are depicted. Dotted lines indicate edges in the complement.}
\label{fig:expansions}
\end{figure}

As part of the induction argument, we show that all of the starting graphs have a 4-locally self-avoiding Eulerian circuit, and that each of these operations preserves the property of there existing such a circuit. The latter is unfortunately laborious for the pegging operation, but we will start with the other two for which it is easy. Throughout, we freely use the fact that 3-connected planar graphs have a unique embedding on the sphere, and assume a fixed choice of outer face.

\begin{lemma}\label{cycleaddition}
Suppose $G$ and $H$ are 3-connected quartic planar graphs such that $G$ can be constructed from $H$ by a single 4-cycle addition. If $H$ admits a 4-locally self-avoiding Eulerian circuit, then $G$ does as well.
\end{lemma}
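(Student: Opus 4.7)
The plan is to take a good Eulerian circuit $C_H$ of $H$ and modify it only in a small neighbourhood of the site of the 4-cycle addition so as to obtain a good Eulerian circuit $C_G$ of $G$; outside this neighbourhood, $C_G$ will agree with $C_H$ edge for edge.

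First I would set up the local picture. Let $x$ be the vertex of $H$ at which the 4-cycle addition of Figure~\ref{fig:expansions}(b) is performed, with neighbours $a,b,c,d$. In $G$ this local picture is replaced by four new vertices $e,f,g,h$ carrying a 4-cycle on themselves, four spokes $xe,xf,xg,xh$, and four boundary edges $ae,bf,cg,dh$ that take over from the deleted edges $xa,xb,xc,xd$. Since $\deg_H(x)=4$, the circuit $C_H$ visits $x$ exactly twice, which pairs the four edges at $x$ into two matched pairs; up to relabelling there are three such pairings.

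Second, for each pairing I would exhibit two edge-disjoint trails through the new region whose endpoints realise the pairing and which together traverse the eight new internal edges (the four spokes and four rim edges) exactly once. After removing the boundary edges, the internal sub-configuration has $x$ of degree $4$ and each of $e,f,g,h$ of degree $3$, so by parity an Euler-type decomposition into two trails joining the required endpoint pairs always exists; for instance, for the pairing $\{xa,xb\}\,|\,\{xc,xd\}$ one possible choice is a short trail from $e$ to $f$ through $x$, paired with a longer trail from $g$ to $h$ covering the remaining six edges. Substituting these two trails into $C_H$ in place of its two visits to $x$ yields an Eulerian circuit $C_G$ of $G$.

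Third, I would verify that $C_G$ has no subcycle of length at most $4$. Any such subcycle must use at least one new edge, so either it lies entirely inside the new region or it spans the new region and the unchanged part of $C_H$. The first possibility is ruled out by choosing routings in step two that never traverse three or four consecutive edges of any triangle or 4-cycle built into the new region. The second possibility, after contracting the new region back down to $x$, descends to a closed walk in $C_H$ of no greater length; a short bookkeeping argument accounting for the new vertices inserted between consecutive old vertices shows that this walk must be a subcycle of $C_H$ of length at most $4$, contradicting the goodness of $C_H$.

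The main obstacle is the selection of routings in step two so that the short cycles inherent in the new region --- the rim 4-cycle $efgh$ and the triangles $xef,xfg,xgh,xhe$ built from the spokes --- do not appear as subcycles of $C_G$. The case analysis is finite, since each of the three pairings admits only a handful of candidate trail decompositions, but it is where the substantive content of the proof lies; once a suitable routing is pinned down for each pairing, the verification in step three is mechanical.
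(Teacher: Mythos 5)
Your overall plan is the same as the paper's: pair the four edges at $x$ according to the two visits of the good circuit $C_H$ to $x$, substitute for each pair a trail through the new gadget, and then check that no short subcycle is created. The paper handles the two essentially distinct pairings (one ``adjacent'' pair $\{xa,xb\}\mid\{xc,xd\}$, plus the ``opposite'' pair $\{xa,xc\}\mid\{xb,xd\}$) by giving explicit trails and observing that anything that leaves the gadget projects to a subcycle of $C_H$ through $x$ of no greater length. So the skeleton of what you propose is sound.

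However, the one concrete routing you offer does not work, and this is precisely the nontrivial part you then wave away as ``a handful of candidate trail decompositions.'' You suggest, for the pairing $\{xa,xb\}\mid\{xc,xd\}$, taking the short trail $e\,x\,f$ and letting a single long $(g,h)$-trail carry the remaining six edges $xg,\,xh,\,ef,\,fg,\,gh,\,he$. But in that $6$-edge multigraph every Eulerian $(g,h)$-trail has a subcycle of length $3$ or $4$: the six such trails are $g\,x\,h\,e\,f\,g\,h$, $g\,x\,h\,g\,f\,e\,h$, $g\,f\,e\,h\,x\,g\,h$, $g\,f\,e\,h\,g\,x\,h$, $g\,h\,e\,f\,g\,x\,h$, and $g\,h\,x\,g\,f\,e\,h$, and each contains a subcycle $hxg h$, $gxhg$, $hefgh$, $gfeh g$, or $ghefg$. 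So the ``short through $x$'' plus ``long around the rim'' split can never be made good. What is needed is a balanced split in which \emph{each} inserted trail visits five distinct vertices of the gadget, so that neither trail can contain a subcycle on its own; for the adjacent pairing the paper uses the two $4$-edge trails $e\,x\,h\,g\,f$ and $g\,x\,f\,e\,h$ (giving the circuits $aexhgfb$ and $cgxfehd$ once boundary edges are reattached), and for the opposite pairing $\{xa,xc\}\mid\{xb,xd\}$ it uses $e\,f\,x\,h\,g$ and $f\,g\,x\,e\,h$. Because the vertices on each inserted trail are pairwise distinct, there are no short subcycles inside the gadget, and any subcycle of $C_G$ that uses a boundary edge identifies (under $ae\leftrightarrow ax$ etc.) with a subcycle of $C_H$ through $x$ whose length only increases under the substitution. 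Your step~three sketch is fine modulo this, but the proposal as written leaves the substantive step unresolved and, worse, the candidate it names cannot be repaired.
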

\begin{proof}
Taking a 4-locally self-avoiding Eulerian circuit in $H$, we will show that it can be extended to include the new edges created in the expansion whilst still avoiding 3- and 4-cycles. The edge $ax$ is directly followed by one of $bx$, $cx$ and $dx$ in the circuit. First suppose that $axb$ is a subpath. This forces $cxd$ (or equivalently, $dxc$) to also be a subpath of the circuit. To extend, replace $axb$ by the path $aexhgfb$, and $cxd$ by $cgxfehd$ as indicated in the Figure~\ref{fig:cycleaddn}(a). The resulting circuit is certainly Eulerian, and we have not created any short subcycles within either of the paths. There is also no chance of creating short subcycles nearby; if we identify $ax$, $bx$, $cx$ and $dx$ in $H$ with $ae$, $bf$, $cg$ and $dh$ in $G$ respectively, then it can be observed that any subcycle involving $x$ increases in length when we extend the circuit. The case that we have $axd$ is symmetric, and an appropriate extension can be obtained by rotating the figure.

If instead we have $axc$ in the original circuit, then we must also have $bxd$. We replace these paths by $aefxhgc$ and $bfgxehd$ respectively (Figure~\ref{fig:cycleaddn}(b)). Again, the resulting circuit is Eulerian, and by the same reasoning as before it does not have any short subcycles. 
\end{proof}

\begin{figure}[h]
	\small
	\centering
\includegraphics[scale=0.55]{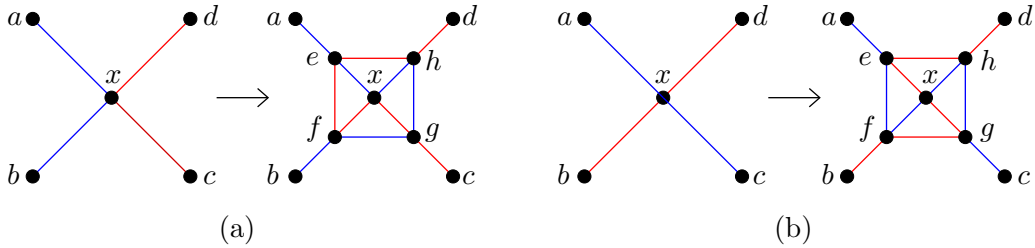}
\put(-380,60){$a$}
\put(-380,0){$b$}
\put(-306,60){$d$}
\put(-306,0){$c$}
\put(-343,38){$x$}
\put(-172,60){$a$}
\put(-172,0){$b$}
\put(-98,60){$d$}
\put(-98,0){$c$}
\put(-135,38){$x$}
\put(-282,60){$a$}
\put(-282,0){$b$}
\put(-208,60){$d$}
\put(-208,0){$c$}
\put(-244,38){$x$}
\put(-267,45){$e$}
\put(-267,18){$f$}
\put(-222,18){$g$}
\put(-222,43){$h$}
\put(-72,60){$a$}
\put(-72,0){$b$}
\put(2,60){$d$}
\put(2,0){$c$}
\put(-34,38){$x$}
\put(-58,45){$e$}
\put(-58,18){$f$}
\put(-12,18){$g$}
\put(-12,43){$h$}
\put(-300,-20){(a)}
\put(-90,-20){(b)}
\caption{Extension of an Eulerian circuit after a 4-cycle addition.}
\label{fig:cycleaddn}
\end{figure}

\begin{lemma}\label{cycleslide}
Suppose $G$ and $H$ are 3-connected quartic planar graphs such that $G$ can be constructed from $H$ by a single 3-cycle slide. If $H$ admits a 4-locally self-avoiding Eulerian circuit, then $G$ does as well.
\end{lemma}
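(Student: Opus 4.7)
My plan is to mirror the proof of Lemma~\ref{cycleaddition}: take a 4-locally self-avoiding Eulerian circuit $C$ in $H$ and construct such a circuit $C'$ in $G$ by locally modifying $C$ around the vertex $u$ and the edge $ad$, both of which are removed by the 3-cycle slide. In $G$, $u$ and $ad$ are replaced by the triangle $xyz$ together with the external edges $xa, xd, ya, yb, zc, zd$; so $C'$ must omit the five edges $ua, ub, uc, ud, ad$ of $C$ and include the nine new edges of $G$.

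I proceed by case analysis on the transit pairing of $C$ at $u$: either $\{ua, ub\}/\{uc, ud\}$, $\{ua, uc\}/\{ub, ud\}$, or $\{ua, ud\}/\{ub, uc\}$. In the third case $C$ contains the subpath $aud$; since $aud$ is a triangle of $H$, 4-local self-avoidance then rules out the transit $\{au, ad\}$ at $a$ and $\{ud, ad\}$ at $d$, which already constrains where the edge $ad$ can appear. For each pairing I will exhibit an explicit local rerouting. As an example in the third case: replace the subpath $aud$ by $axd$, replace $buc$ by $byzc$, and replace the edge $ad$ (occurring elsewhere in $C$) by the length-four detour $ayxzd$. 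This uses each of the nine new edges exactly once and yields transits $\{xa, xd\},\{xy, xz\}$ at $x$; $\{ay, xy\},\{yb, yz\}$ at $y$; and $\{xz, zd\},\{yz, zc\}$ at $z$. The other two pairings admit analogous rearrangements; for instance, in case (i), I replace $aub$ by $axyb$, $cud$ by $czxd$, and $ad$ by $ayzd$, which gives transits at each triangle vertex that split the two triangle edges between different pairs.

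Then I have to verify that $C'$ is 4-locally self-avoiding. The triangle $xyz$ fails to be a 3-subcycle of $C'$ so long as at least one of $x, y, z$ does not pair its two triangle edges as a transit, which is satisfied by direct inspection in each of the proposed rearrangements above. The remaining short subcycles of concern must involve at least one external edge, and are ruled out by checking the interaction between the new transits at $x, y, z$ and the inherited transits at $a, b, c, d$. The transits at $b$ and $c$ are essentially unchanged (only $ub \mapsto yb$ and $uc \mapsto zc$), so the bulk of the work is at $a$ and $d$, where two incident edges are replaced at once and their old transits must be reassigned between the new edges.

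I expect this last step to be the main obstacle: configurations in which $a$ or $d$ has extra short-range adjacencies to $b$, $c$, or each other through short paths outside the modified region could threaten to close a 3- or 4-subcycle through the new transits, and each such possibility has to be ruled out directly. Planarity of $H$ and the local structure of the 3-cycle slide should restrict the feasible configurations enough that each subcase yields to a routine check, in the same spirit as the analysis in Lemma~\ref{cycleaddition}.
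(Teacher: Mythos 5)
Your strategy — case analysis on the transit pairing of $C$ at $u$, followed by explicit local reroutings that thread the nine new edges through the triangle $xyz$ — is the same as the paper's. You enumerate all three transit pairings, whereas the paper's proof as written treats only the two giving $auc$ and $aub$, so your case setup is in fact more complete. The gap is in the verification plan. You propose a \emph{single} fixed rerouting per case and intend to verify it by ruling out the problematic configurations; but some of those configurations are genuinely achievable and cannot be ruled out. Concretely, take your case (i) (transit $aub$, rerouting $aub\mapsto axyb$, $cud\mapsto czxd$, $ad\mapsto ayzd$). If $C$ contains the subpath $daub$, so that the edge $ad$ lands immediately before the transit at $u$, then this segment becomes $dzyaxyb$ after rerouting, which traverses the 3-cycle $yaxy$ as a subcycle. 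Nothing about planarity, 3-connectedness, or the local structure of the slide excludes $daub$ from a good Eulerian circuit of $H$, so this failure is real, and your claim that ``each such possibility has to be ruled out directly'' cannot be carried out.

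What the paper does instead — and what your plan is missing — is offer \emph{two} alternative reroutings for each transit case and then show that their respective failure conditions are mutually exclusive. For the $aub$ pairing, one alternative fails only when $aduc$ is a subpath of $C$, the other only when $buad$ is a subpath; if both held, $C$ would traverse the triangle $uadu$ consecutively, contradicting that $C$ is 4-locally self-avoiding in $H$. That mutual-exclusion observation is the crux of the proof, and it is exactly what makes the 3-cycle slide harder than the 4-cycle addition of Lemma~\ref{cycleaddition}: the slide deletes the edge $ad$ and reinserts a detour for it elsewhere, so the position of $ad$ in $C$ can interfere with the detour at $u$ in a way that no single fixed rerouting can absorb. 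To close the gap you would need to supply a second alternative (and the accompanying exclusion argument) for each transit case, rather than a lone substitution.
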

\begin{proof}
We begin by taking a 4-locally self-avoiding Eulerian circuit in $H$. At vertex $u$, the circuit can either proceed straight ahead or turn. The former occurs when $auc$ is in the original circuit. To obtain an Eulerian circuit in $G$, we make the following substitutions; $auc$ becomes $ayzc$, $bud$ becomes $byxzd$, and $ad$ becomes $axd$.The only way a short subcycle may occur is if $adub$ was a subpath of the original circuit, causing the edges of the triangle $xzd$ to be traversed consecutively. In this case, we use the alternative substitutions replacing $auc$ with $ayxzc$, $bud$ with $byzd$, and $ad$ with $axd$. These are illustrated in Figure~\ref{fig:cycleslide}(a). With reference to the colours in the figure, we would have another problematic triangle $axy$ if $ad$ were to occur consecutive to the red edges in the circuit. However, this is impossible since if $ad$ were adjacent to both the red and green paths, the triangle $uadu$ would be a subcycle of the original circuit which was assumed to be 4-locally self-avoiding.

In the second case, the original circuit contains $aub$ and we obtain an Eulerian circuit in $G$ by one of the following substitutions; either $aub$ becomes $ayb$, $cud$ becomes $czyxd$, and $ad$ becomes $axzd$, or we replace $aub$ by $axzyb$, $cud$ by $czd$, and $ad$ by $ayxd$. These are shown in Figure~\ref{fig:cycleslide}(b). Neither of these can create 4-cycles. The former yields a circuit that will only have a 3-cycle if $aduc$ is a subpath, whilst the latter can only produce a 3-cycle if $buad$ is a subpath. We have already noted that these cannot be true simultaneously, so one of the resulting Eulerian circuits is free of short subcycles.
\end{proof}

\begin{figure}[h]
	\centering
 \scalebox{0.99}{
	\small
\includegraphics[scale=0.55]{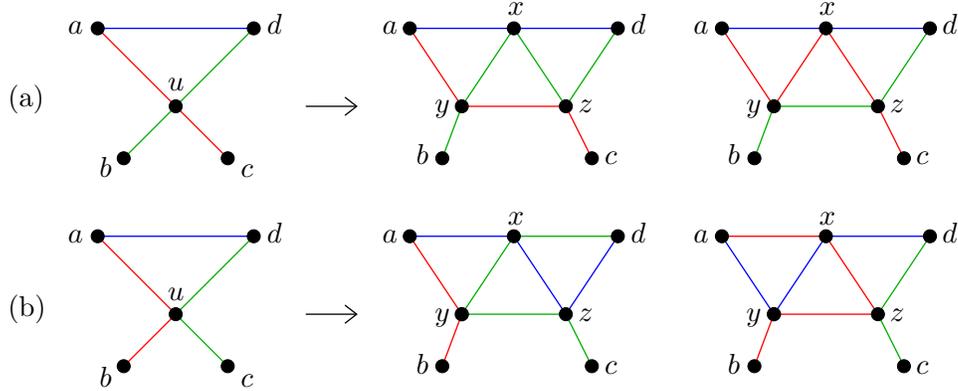}
\put(-332,130){$a$}
\put(-320,75){$b$}
\put(-256,130){$d$}
\put(-266,75){$c$}
\put(-294,108){$u$}
\put(-332,50){$a$}
\put(-320,-5){$b$}
\put(-256,50){$d$}
\put(-266,-5){$c$}
\put(-294,28){$u$}
\put(-212,130){$a$}
\put(-199,80){$b$}
\put(-117,130){$d$}
\put(-127,80){$c$}
\put(-164,137){$x$}
\put(-192,100){$y$}
\put(-137,100){$z$}
\put(-212,50){$a$}
\put(-199,0){$b$}
\put(-117,50){$d$}
\put(-127,0){$c$}
\put(-164,57){$x$}
\put(-192,20){$y$}
\put(-137,20){$z$}
\put(-93,130){$a$}
\put(-80,80){$b$}
\put(2,130){$d$}
\put(-8,80){$c$}
\put(-45,137){$x$}
\put(-73,100){$y$}
\put(-18,100){$z$}
\put(-93,50){$a$}
\put(-80,0){$b$}
\put(2,50){$d$}
\put(-8,0){$c$}
\put(-45,57){$x$}
\put(-73,20){$y$}
\put(-18,20){$z$}
\put(-355,102){(a)}
\put(-355,22){(b)}}
\caption{Extension of an Eulerian circuit after a 3-cycle slide. Edges of the same colour are assumed to appear consecutively, and the blue edge $ad$ may or may not be consecutive to either the red or green edges.}
\label{fig:cycleslide}
\end{figure}

\begin{lemma}\label{lemma:pathinsquare}
Suppose we have a connected plane graph $W$ with two vertices of degree 1, one vertex of degree 2 and all other vertices have degree 4. Suppose further that the vertices of degree less than 4 are pairwise non-adjacent and lie on the outer face. Then $W$ has at least 5 vertices of degree 4.
\end{lemma}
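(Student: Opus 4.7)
The plan is to pass to the subgraph $G_4 \subseteq W$ induced by the degree-$4$ vertices, and to derive a contradiction from the hypothesis whenever $n_4 := |V(G_4)| \leq 4$ by combining a simple edge count with the plane embedding.

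First I would observe that $|V(W)| = n_4 + 3$ and, by summing degrees, $|E(W)| = 2n_4 + 2$. Because $u, w_1, w_2$ are pairwise non-adjacent, each of the $\deg(u) + \deg(w_1) + \deg(w_2) = 4$ edges incident to a low-degree vertex has its other endpoint in $V(G_4)$, so $G_4$ is a simple plane graph with $|E(G_4)| = 2n_4 - 2$. The simplicity bound $2n_4 - 2 \leq \binom{n_4}{2}$ rearranges to $(n_4 - 1)(n_4 - 4) \geq 0$, forcing $n_4 \leq 1$ or $n_4 \geq 4$. The case $n_4 = 0$ gives a negative edge count, and $n_4 = 1$ forces both edges at $u$ to share the unique degree-$4$ vertex as their other endpoint, contradicting the simplicity of $W$.

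The hard part will be ruling out $n_4 = 4$. Here the edge count is tight, so $G_4 \cong K_4$, and each degree-$4$ vertex has exactly one edge in $W$ going to a low-degree vertex. In any plane embedding, $K_4$ has four triangular faces, so once the outer face is chosen exactly three vertices lie on it and a fourth vertex $v$ is incident only to interior faces. The low-degree neighbor $x$ of $v$ in $W$ must be drawn inside some face of $G_4$ incident to $v$; since inserting low-degree vertices and their edges can only subdivide the faces of $G_4$ in which they are drawn (never turning an interior face into the unbounded one), $x$ ends up inside a bounded region of the plane and hence cannot lie on the outer face of $W$. This contradicts the hypothesis, so $n_4 \geq 5$.
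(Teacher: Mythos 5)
Your proof is correct, and it takes a genuinely different route from the paper's. The paper argues locally: it considers the closed walk $C$ of degree-4 vertices around the degree-2 vertex, observes that if $C$ is a 3- or 4-cycle then the inward-pointing half-edges (2 or 4 of them) cannot all be accounted for by chords, and concludes there must be extra degree-4 vertices inside $C$. You instead argue globally: an edge count gives $|E(G_4)| = 2n_4 - 2$, simplicity of $G_4$ forces $n_4 \le 1$ or $n_4 \ge 4$, the small cases fall to trivial counting or a multi-edge, and the only real work is the tight case $n_4 = 4$, where $G_4 \cong K_4$ and the unique plane embedding of $K_4$ traps the low-degree neighbour of the interior vertex inside a bounded face. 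Your approach has the merit of isolating exactly where planarity is used (only in the $K_4$ case) and makes the edge-counting structure explicit, while the paper's argument is shorter and does not require identifying $G_4$ up to isomorphism. One small point worth making explicit in your write-up: the assertion that every degree-4 vertex has exactly one edge to a low-degree vertex follows because each vertex of $K_4$ already has degree 3 in $G_4$ and must reach degree 4 in $W$, with exactly 4 such attaching edges available to share among 4 vertices.
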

\begin{proof}
Let $C$ be the closed walk formed by the edges between the vertices of degree 4 that share a face with the vertex of degree 2. If $C$ has five or more vertices, then we are done. Otherwise, $C$ is a simple cycle with 3 or 4 vertices, and 2 or 4 half-edges respectively for which the other endpoint has degree 4. Moreover, those half-edges point inward since they are not in $C$ and by assumption the outer face is incident to the vertex of degree 2. In the case that it is $C_3$, no chords are possible and a single vertex inside $C$ would have at most degree 3, so there must be at least two such vertices. In the case that it is $C_4$, at most one chord is possible which would account for only two of the half-edges, so there must be a vertex inside $C$.
\end{proof}

For our purposes, it would have been enough to require three vertices of degree four in the previous lemma, but five is a sharp lower bound. 

\begin{lemma}\label{pegging}
Suppose $G$ and $H$ are 3-connected quartic planar graphs such that $G$ can be constructed from $H$ by a single pegging. If $H$ admits a 4-locally self-avoiding Eulerian circuit, then $G$ does as well.
\end{lemma}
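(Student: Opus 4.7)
The plan is to start with a 4-locally self-avoiding Eulerian circuit $C$ in $H$, reroute its uses of the two deleted edges $ab$ and $cd$ through the new vertex $u$ (with new edges $ua, ub, uc, ud$), and show that at least one choice of rerouting avoids creating a short subcycle. Writing $C$ cyclically as $C = a\,b\,T_1\,c\,d\,T_2\,a$, where $T_1$ is a subtrail from $b$ to $c$ and $T_2$ is a subtrail from $d$ to $a$, an Eulerian circuit in $G$ through $u$ must pair the four new edges at $u$ into two visits according to one of three options: (i) $\{ua,ub\}$ with $\{uc,ud\}$, (ii) $\{ua,uc\}$ with $\{ub,ud\}$, or (iii) $\{ua,ud\}$ with $\{ub,uc\}$.

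For each pairing I would write down an explicit candidate circuit in $G$. Option (i) gives $C_1 = a\,u\,b\,T_1\,c\,u\,d\,T_2\,a$ directly, and option (ii) gives $C_2 = a\,u\,c\,T_1^{-1}\,b\,u\,d\,T_2\,a$. Option (iii) naively produces two disjoint closed walks $a\,u\,d\,T_2$ and $b\,u\,c\,T_1^{-1}$, which must be spliced at a vertex common to $T_1$ and $T_2$; such a vertex exists because the $3$-connectedness of $H$ ensures that $H-\{ab,cd\}$ is connected, so the edges of $T_1$ and $T_2$ meet at some vertex. Since the edge sets of $T_1$ and $T_2$ are unchanged in each $C_i$, any new short subcycle must pass through $u$ and hence must begin and end with two of the four new edges at $u$. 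I would then enumerate, for each pairing, the finitely many potentially offending 3- and 4-subcycles through $u$ in terms of the $C$-neighbors of $a, b, c, d$ along their other incident edges in $T_1$ and $T_2$, together with the adjacencies among these vertices.

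The main obstacle is to show that at least one of $C_1, C_2, C_3$ is free of short subcycles. I would argue by contradiction: simultaneous failure of (i), (ii), and (iii) forces very particular short cycles and adjacencies near $a, b, c, d$, and combined with planarity and $3$-connectedness, these constraints force a specific structural bottleneck in the part of $H$ surrounding the pegging region. This is where Lemma~\ref{lemma:pathinsquare} enters: the surrounding region, viewed as a subgraph whose low-degree vertices lie on the boundary of the pegged face, can be arranged to satisfy the hypotheses of that lemma, and its lower bound on the number of degree-4 vertices contradicts the assumed configuration. In the few residual cases where no pairing succeeds on its own, a local modification of $C$---analogous to the rerouting tricks used in the proofs of Lemmas~\ref{cycleaddition} and~\ref{cycleslide}, for example a small swap of two segments of $C$ at a shared vertex---should resolve the remaining short cycle. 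The most laborious step, and the one the paper explicitly flags, is this enumeration of local configurations and the verification that each of them is either handled by one of the three pairings or eliminated by Lemma~\ref{lemma:pathinsquare}.
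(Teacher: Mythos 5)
The opening steps are consistent with the paper: form the induced circuit $C_1 = aubT_1cudT_2a$, note that any new short subcycle must pass through $u$, and classify those subcycles by which two of $ua,ub,uc,ud$ they use. But the central claim of your plan --- that one of the three pairings $C_1$, $C_2$, $C_3$ (or a ``local modification'') always works, or else one reaches a contradiction via Lemma~\ref{lemma:pathinsquare} --- is not sound, and this is not a side issue: it is where essentially all of the difficulty lives.

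Here is a concrete counterexample to the plan. By definition, pegging is applied to two edges $ab$ and $cd$ of a path $abcd$ in $H$, so $bc$ is always an edge of $H$ (and of $G$). Suppose the good Eulerian circuit of $H$ traverses $ab$ and then immediately $bc$, so that in your notation $T_1$ is the single edge $bc$. Then $C_1$ and $C_2$ each contain the $3$-cycle $ubcu$ (respectively $ucbu$) as a subcycle, and the two closed walks that constitute $C_3$ include $bucb$, which is a $3$-cycle that persists after splicing because $T_1$ has no interior vertex at which to break it. So all three pairings fail simultaneously; yet the lemma is true in this case, so there is no contradiction for you to derive. Thus the proposed contradiction via Lemma~\ref{lemma:pathinsquare} cannot exist, and the ``few residual cases'' are in fact the main case. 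The paper resolves this configuration (its Case~1, the $3$-cycle $ubcu$) not by modifying the circuit only at $u$, but by cutting the remainder of the Eulerian circuit at the second visits to $b$ and $c$ into three segments $X,Y,Z$ and reassembling them in one of four nontrivial ways (Figure~\ref{fig:pretzels}); verifying that at least one of these reassemblies avoids short subcycles then needs planar 2-cut and trapped-vertex arguments spread over several subcases. The three $4$-cycle types are handled by similarly global reroutings, using further segment decompositions at $v$ and at $a$ or $d$.

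Two smaller but substantive gaps: Lemma~\ref{lemma:pathinsquare} is not used in the structural role you assign it; in the paper it only supplies lower bounds on the length of specific segments trapped inside a small facial cycle, in a handful of subcases. And your outline omits the observation, used repeatedly in the paper, that the induced circuit $C_1$ contains at most one short subcycle (because two would force $G$ to have at most $8$ edges), which is what lets the paper fix the bad cycles one at a time.
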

When interpreting diagrams in the following proof, the order of half-edges and dotted trails that occur consecutively in the cyclic ordering at a vertex is unimportant. A common situation that we see is that of a \emph{trapped} vertex, which refers to a degree-deficient vertex that cannot be included by extending the current circuit without either creating a 2-vertex-cut or violating planarity. Figure~\ref{fig:trapped} depicts two representative traps. Suppose we have the black subgraph on the left with cyclic orderings as shown, and all other edges are in a single segment $uEb$. Then $be_{-1}$ is outside the region bounded by $uvwcu$. In order for $E$ to contain vertex $w$, the edge $e_{1}u$ must be inside that cycle since there are no other degree-deficient vertices that act as entry points into this bounded region. However, as there is no exit point, this means that $E$ cannot be connected which is a contradiction. In the second case, we have introduced a vertex $x$. To avoid the previous contradiction, we must have the cyclic orderings of black edges and $E$-edges as drawn, but then $\{b,x\}$ is a 2-vertex-cut. In both cases, we say that $w$ is trapped.

\begin{figure}[h]
	\small
	\centering
\includegraphics[scale=0.5]{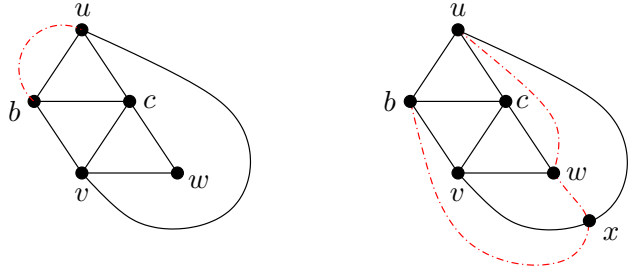}
\put(-210,95){$u$}
\put(-235,55){$b$}
\put(-184,60){$c$}
\put(-210,25){$v$}
\put(-167,30){$w$}
\put(-68,95){$u$}
\put(-93,59){$b$}
\put(-43,60){$c$}
\put(-68,25){$v$}
\put(-24,33){$w$}
\put(-10,10){$x$}
\caption{Two situations in which vertex $w$ is trapped.}
\label{fig:trapped}
\end{figure}

\begin{proof}[Proof of Lemma~\ref{pegging}]
Let the expanded graph $G$ be obtained from $H$ by pegging two edges $ab$ and $cd$ of a path $abcd$, to produce a new vertex $u$ as in Figure~\ref{fig:expansions}(a). By assumption, $H$ has a good Eulerian circuit. This induces an Eulerian circuit $C$ in $G$ given by replacing $ab$ with $aub$ and $cd$ with $cud$ appropriately oriented. 

We shall characterise the situations in which this induced circuit is not 4-locally self-avoiding. Any subcycle in the circuit that does not contain the vertex $u$ was also present in the original circuit in $H$, and hence cannot be a 3-cycle or 4-cycle. Therefore, any short subcycle must contain $u$. Moreover, since $aub$ and $cud$ are subpaths of $C$, if $u$ were to lie on two short subcycles then $C$ would have to be of the form $uPuQu$ where $uPu$ and $uQu$ have length at most four. That would imply that $G$ has at most 8 edges, which is impossible. Thus, the induced circuit contains at most one short subcycle. 

By the definition of pegging we know that $ab$ and $cd$ are non-edges. It is also straightforward to deduce that $ac$ and $bd$ are also non-edges. For example, if $bd$ were an edge then the face bounded by the walk $bduaub$ must have an even number of half-edges pointing into the face. There are already 3 such half-edges from $a$, and there must be at least one from each of $b$ and $d$ to avoid a 2-vertex-cut. This forces all six half-edges from $a$, $b$, and $d$ to point into this face, but then $c$ would be a cutvertex. This tells us the possible edges near the pegging, so the potential problem cycles are as follows:

\begin{itemize}[topsep=1pt]
\itemsep=0mm
\item If $u$ is part of a 3-cycle, then the two possibilities are $ubcu$ and $uadu$. These are symmetric, so we will only handle the former.
\item If $u$ is part of a 4-cycle, then there are four possible forms; $ubvcu$, $uavdu$, $ubvdu$, $uavcu$ where $v$ is some vertex in $G$ distinct from $a$, $b$, $c$, $d$ and $u$. The third and fourth cases are symmetric. Note also that $uavbu$ and $ucvdu$ are not possible, since in $H$ they would correspond to cycles $avba$ and $cvdc$. 
\end{itemize}
The remainder of the proof resolves each of the four distinct types by rerouting the circuit.

\begin{figure}[h]
	\small
	\centering
	\scalebox{0.9}{
\includegraphics[scale=0.65]{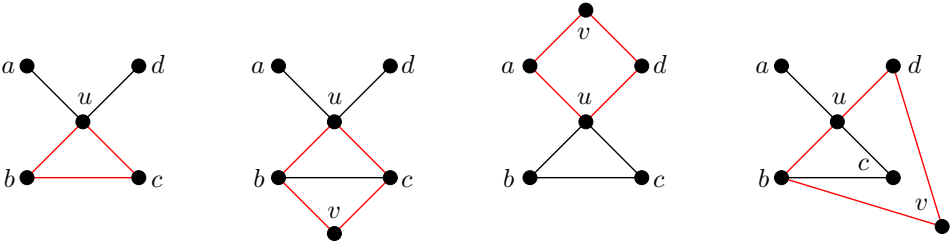}
\put(-50,58){$u$}
\put(-82,71){$a$}
\put(-81,23){$b$}
\put(-18,71){$d$}
\put(-39,30){$c$}
\put(-15,13){$v$}
\put(-157,58){$u$}
\put(-189,71){$a$}
\put(-188,23){$b$}
\put(-125,71){$d$}
\put(-125,23){$c$}
\put(-157,85){$v$}
\put(-262,58){$u$}
\put(-294,71){$a$}
\put(-293,23){$b$}
\put(-231,71){$d$}
\put(-231,23){$c$}
\put(-262,10){$v$}
\put(-367,58){$u$}
\put(-399,71){$a$}
\put(-398,23){$b$}
\put(-336,71){$d$}
\put(-336,23){$c$}
}
\caption{Four types of short subcycles that can be created by pegging, shown in red.}
\label{fig:3types}
\end{figure}

\noindent\textit{Case 1: Resolving a 3-cycle of the form $ubcu$.}\\
We begin by finding a convenient general form for $C$. Consider the three segments $X$ (between $u$ and $b$), $Y$ (between $b$ and $c$) and $Z$ (between $c$ and $u$). The circuit can then be written as one of $ubcuXbYcZu$ and $ucbuXbYcZu$.  In all figures for this case, green edges and half-edges are on $X$, red on $Y$ and blue on $Z$. We break into three subcases depending on whether one, two or all of the segments have at least $3$ vertices. It is not possible for all three segments to be shorter than this; the smallest 3-connected quartic planar graph excluding the octahedron has 8 vertices, so the average number of vertices on each segment is at least $\frac{10}{3}>3$.

\wl
\noindent\textit{Case 1a.} If $X$, $Y$ and $Z$ all have at least 3 vertices, we can try any of the symmetric circuits shown in Figure~\ref{fig:pretzels}. There are actually six circuits of this form, however we exclude those in which $X$ and $Z$ appear consecutively since these segments are separated by the triangle $ucbu$ in the original circuit.

\begin{figure}[h]
	\small
	\centering
\includegraphics[scale=0.55]{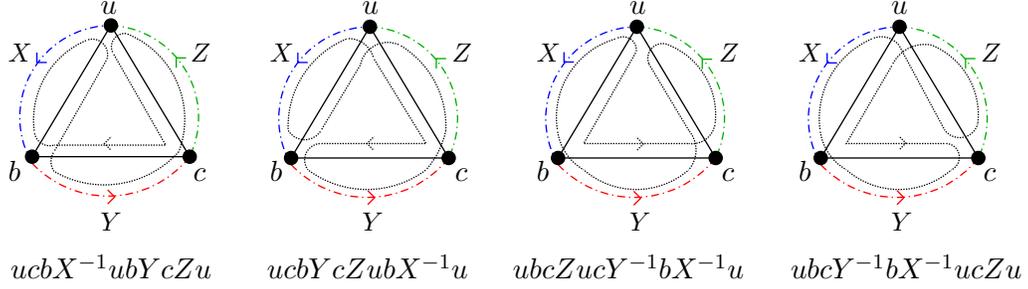}
\put(-370,-27){$ucbX^{-1}ubYcZu$}
\put(-273,-27){$ucbYcZubX^{-1}u$}
\put(-180,-27){$ubcZucY^{-1}bX^{-1}u$}
\put(-75,-27){$ubcY^{-1}bX^{-1}ucZu$}
\put(-37, 73){$u$}
\put(-72, 10){$b$}
\put(-2, 10){$c$}
\put(-72, 55){\footnotesize{$X$}}
\put(-37, -9){\footnotesize{$Y$}}
\put(-3, 55){\footnotesize{$Z$}}
\put(-136, 73){$u$}
\put(-171, 10){$b$}
\put(-101, 10){$c$}
\put(-171, 55){\footnotesize{$X$}}
\put(-136, -9){\footnotesize{$Y$}}
\put(-102, 55){\footnotesize{$Z$}}
\put(-237, 73){$u$}
\put(-272, 10){$b$}
\put(-202, 10){$c$}
\put(-272, 55){\footnotesize{$X$}}
\put(-237, -9){\footnotesize{$Y$}}
\put(-203, 55){\footnotesize{$Z$}}
\put(-336, 73){$u$}
\put(-371, 10){$b$}
\put(-301, 10){$c$}
\put(-371, 55){\footnotesize{$X$}}
\put(-336, -9){\footnotesize{$Y$}}
\put(-302, 55){\footnotesize{$Z$}}
\caption{Possible Eulerian circuits when each segment is long.}
\label{fig:pretzels}
\end{figure}

When we switch between segments, short subcycles may occur in the following situations.
\begin{enumerate}[leftmargin=*]
\item For $ucbX^{-1}ubYcZu$, we have a 3-cycle if $x_1=y_1$ and a 4-cycle if $x_1=y_2$ or $x_2=y_1$. 
\item For $ucbYcZubX^{-1}u$, we have a 3-cycle if $x_{-1}=z_{-1}$ and a 4-cycle if $x_{-1}=z_{-2}$ or $x_{-2}=z_{-1}$. 
\item For $ubcZucY^{-1}bX^{-1}u$, we have a 3-cycle if $y_{-1}=z_{-1}$ and a 4-cycle if $y_{-1}=z_{-2}$ or $y_{-2}=z_{-1}$.
\item For $ubcY^{-1}bX^{-1}ucZu$, we have a 3-cycle if $x_1=z_1$ and a 4-cycle if $x_1=z_2$ or $x_2=z_1$. 
\end{enumerate}

Under our assumptions, 3-cycles can be ruled out. Recall that vertices $a$ and $d$ are also adjacent to $u$, and in particular $\{a,d\} = \{x_1,z_{-1}\}$. If $a=x_1=y_1$, then we would have the edge $y_1b=ab$ in $G$, but this contradicts the definition of pegging. If instead $d=x_1=y_1$, this would contradict the already noted fact that $bd$ is also a non-edge. Similar reasoning rules out the possibility that $x_{-1}=z_{-1}$, $x_{-1}=y_{-1}$ or $x_1=z_1$.

Turning to the 4-cycles, since $G$ is quartic, each vertex not in $\{a,b,c,d,u\}$ appears on exactly two of $X$, $Y$ and $Z$. This means that we cannot, for instance, have both $x_1=y_2$ and $x_1=z_2$. Suppose we try both $ucbX^{-1}ubYcZu$ and $ubcY^{-1}bX^{-1}ucZu$. In order for both of these to have short subcycles, it must be that $x_1=y_2$ and $x_2=z_1$, or $x_1=z_2$ and $x_2=y_1$. We treat these separately. 

\begin{enumerate} 
\item[1ai.] Suppose that $x_1=y_2$ and $x_2=z_1$. If $uby_1x_1u$ is non-facial, then it bounds one region containing $c$ and $x_2$, and another that necessarily contains vertices from two of the three segments in its interior due to parity and 3-connectedness. However, it is impossible for any two distinct segments to have vertices in both regions as $y_1$ is the only vertex at which a segment could cross between them. If $uby_1x_1u$ is facial, then $Y$ cannot be contained inside $bcz_1x_1y_1b$ as then $\{u,z_1\}$ would be a 2-cut, so we have the configuration shown in Figure~\ref{fig:14conflict}(a).This embedding implies that $X$ and $Z$ are disjoint. It follows that $ucbYcZubX^{-1}u$ (Figure~\ref{fig:pretzels} circuit 2) is a good Eulerian circuit as neither $x_{-1}=z_{-2}$ nor $x_{-2}=z_{-1}$ can hold. 
\item[1aii.] Suppose that $x_1=z_2$ and $x_2=y_1$. If $ux_1x_2bu$ is facial, then $z_2z_3$ must occur between $uz_2$ and $z_1z_2$ in the cyclic ordering at $u$, otherwise $\{u,z_1\}$ would be a 2-cut. Then $z_1$ could lie on $Y$ as well as $Z$, or occur in $Z$ twice, forcing the cyclic orderings shown in Figure~\ref{fig:14conflict}(b) and (c) respectively. In both cases, $X$ and $Z$ are disjoint, so $ucbYcZubX^{-1}u$ (Figure~\ref{fig:pretzels} circuit 2) is a good Eulerian circuit. Note that $z_1$ cannot lie on $X$, since then $\{u,x_1\}$ would be a 2-cut. If $ux_1x_2bu$ is non-facial, then to avoid 2-cuts we are forced to have the cyclic ordering shown in Figure~\ref{fig:14conflict}(d). Now $Y$ and $Z$ are disjoint so the circuit $ubcZucY^{-1}bX^{-1}u$ (Figure~\ref{fig:pretzels} circuit 3) will do.
\end{enumerate}

\begin{figure}[h]
	\footnotesize
	\centering
\includegraphics[scale=0.4]{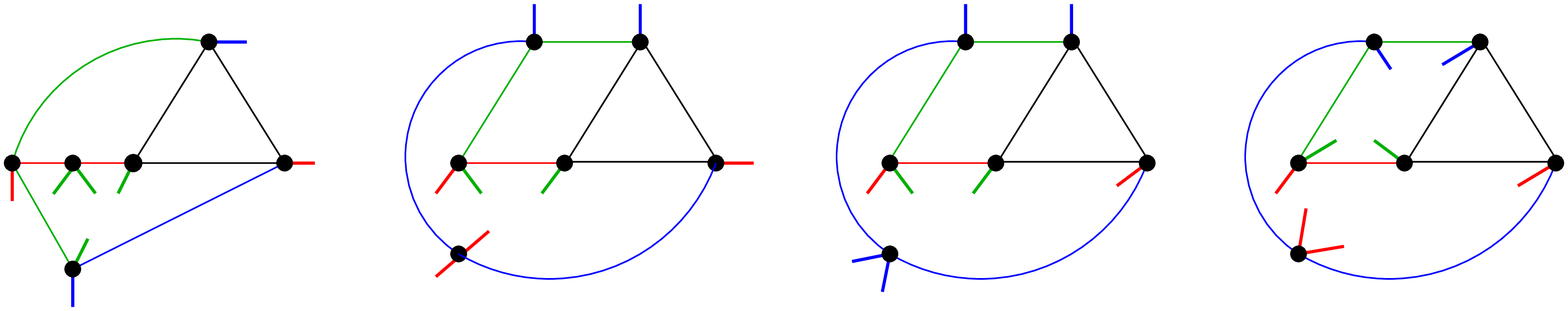}
\put(-22,54){$u$}
\put(-32,37){$b$}
\put(-11,37){$c$}
\put(-53,69){$x_1$}
\put(-70,5){$z_1$}
\put(-62,29){$x_2$}
\put(-42,-8){(d)}
\put(-120,54){$u$}
\put(-130,37){$b$}
\put(-109,37){$c$}
\put(-156,69){$x_1$}
\put(-173,4){$z_1$}
\put(-155,38){$x_2$}
\put(-140,-8){(c)}
\put(-223,54){$u$}
\put(-233,37){$b$}
\put(-212,37){$c$}
\put(-258,69){$x_1$}
\put(-266,4){$z_1$}
\put(-257,38){$x_2$}
\put(-243,-8){(b)}
\put(-326,54){$u$}
\put(-336,37){$b$}
\put(-315,37){$c$}
\put(-362,40){$y_1$}
\put(-383,35){$x_1$}
\put(-369,7){$x_2$}
\put(-341,-8){(a)}
\caption{Cyclic orderings for Cases 1ai and 1aii.}
\label{fig:14conflict}
\end{figure}

\noindent\textit{Case 1b.} Suppose two of $X$, $Y$ and $Z$ have at least 3 vertices, and the third has at most 2 vertices. Let us first assume that $X$ and $Z$ are the longer segments. We again consider the circuit $ucbX^{-1}ubYcZu$ (Figure~\ref{fig:pretzels} circuit 1), which we recall that $x_1\neq y_1$, and this is a good Eulerian circuit unless $x_1=y_2$ or $x_2=y_1$. Combining this with the possible number of vertices in $Y$ leads to three subcases.
\begin{enumerate}
\item[1bi.] If $x_1=y_2$, then $|Y|=2$. Write $X=x_1X'$. Suppose $uz_{-1}$ is in the (outer) region bounded by $ux_1cbu$ that does not contain $y_1$. Then all of $X$ must lie in the interior of this cycle making $\{u,c\}$ a 2-cut, so this cannot be. Suppose instead that $uz_{-1}$ is in the same region bounded by $ux_1cbu$ as $y_1$. Since $z_1$ is adjacent to $c$, it must be that $y_1$ is on both $Z$ and $Y$. In particular, it is not on $X$, which must therefore be contained in either $ux_1y_1bu$ or $by_1x_1cb$. Then either $\{u,y_1\}$ or $\{c,y_1\}$ is a 2-cut. Examples of these configurations are shown in Figure~\ref{fig:2longconflict}(a).

\item[1bii.] If $x_2=y_1$ and $|Y|=1$, then $bx_2c$ must be facial in order to avoid 2-cuts. This determines the cyclic ordering of edges at $c$. Suppose $uz_{-1}$ is between $ux_1$ and $uc$ in the ordering at $u$. If $x_1$ lies on $Z$, then $\{x_1,b\}$ is a 2-cut. Similarly, if the vertex $x_1$ appears twice on $X$, then $\{x_2,b\}$ is a 2-cut. The other possibility is that $uz_{-1}$ occurs between $ux_1$ and $ub$ in the ordering at $u$.  Then $x_1$ must lie on $Z$, and $\{x_1,c\}$ is a 2-cut. These two possible cyclic orderings are shown in Figure~\ref{fig:2longconflict}(b).

\item[1biii.] If $x_2=y_1$ and $|Y|=2$, first observe that we cannot simulateously have $uz_{-1}$ between $ux_1$ and $ub$ in the cyclic ordering at $u$, and $cz_1$ between $bc$ and $y_2c$ in the cyclic ordering at $c$. Let's assume that $cz_1$ is between $uc$ and $y_2c$, and leave the other to symmetry. Writing $X=x_1x_2X'$, if $X'$ has vertices in both regions bounded by the cycle $ux_1x_2bu$ then $\{x_1,b\}$ would be a 2-cut. Thus, $X'$ is in the interior of $ux_1x_2bu$. Figure~\ref{fig:2longconflict}(c) depicts the two possible configurations, and, for a change, neither of these lead to a contradiction. For the upper configuration, we may take the circuit $ubx_2y_2cux_1x_2X'bcZ'x_1Z''u$ where we have split $Z=Z'x_1Z''$. Lemma~\ref{lemma:pathinsquare} ensures that $Z'$ is long enough so that $X'bcZ'x_1Z''$ has no short subcycle. For the lower configuration, the circuit $ucy_2x_2x_1uZ^{-1}cbx_2X'bu$ is 4-locally self-avoiding, again using Lemma~\ref{lemma:pathinsquare} to see that $bx_2X'b$ is not a short subcycle.
\end{enumerate}

\begin{figure}[h]
	\footnotesize
	\centering
\includegraphics[scale=0.49]{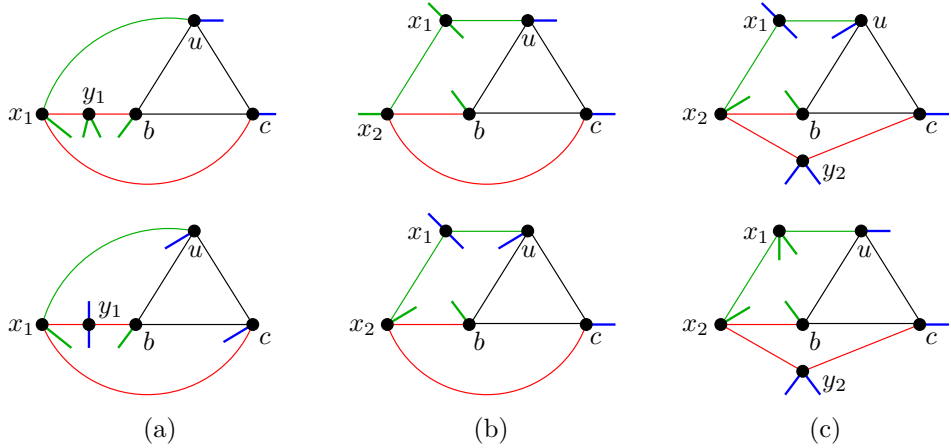}
\put(-30, 140){$u$}
\put(-54, 98){$b$}
\put(-10, 100){$c$}
\put(-79, 140){$x_1$}
\put(-102, 105){$x_2$}
\put(-49, 84){$y_2$}
\put(-36, 53){$u$}
\put(-54, 18){$b$}
\put(-10, 20){$c$}
\put(-79, 60){$x_1$}
\put(-102, 25){$x_2$}
\put(-49, 4){$y_2$}
\put(-163, 133){$u$}
\put(-181, 98){$b$}
\put(-137, 100){$c$}
\put(-206, 140){$x_1$}
\put(-225, 99){$x_2$}
\put(-163, 53){$u$}
\put(-181, 18){$b$}
\put(-137, 20){$c$}
\put(-206, 60){$x_1$}
\put(-228, 25){$x_2$}
\put(-289, 132){$u$}
\put(-306, 98){$b$}
\put(-262, 100){$c$}
\put(-357, 105){$x_1$}
\put(-329, 113){$y_1$}
\put(-289, 53){$u$}
\put(-306, 18){$b$}
\put(-262, 20){$c$}
\put(-357, 25){$x_1$}
\put(-323, 32){$y_1$}
\put(-54, -15){(c)}
\put(-181, -15){(b)}
\put(-306, -15){(a)}
\caption{Configurations in Case 1b.}
\label{fig:2longconflict}
\end{figure}

If $X$ and $Y$ are the longer segments, we can use $ucbYcZubX^{-1}u$ (Figure~\ref{fig:pretzels} circuit 2), and for $Y$ and $Z$ we use $ubcY^{-1}bX^{-1}ucZu$ (Figure~\ref{fig:pretzels} circuit 3), both of which we have already seen in 1a. These are known to avoid 3-cycles already, and an appropriate rotation of the above argument ensures that they also avoid 4-cycles.

\wl
\noindent\textit{Case 1c.} Suppose only one of $X$, $Y$ and $X$ have at least 3 vertices. Since $ab$ and $cd$ are not edges in $G$ and $bc$ is in the cycle, we know that $X$ and $Z$ have at least 2 vertices and $Y$ has at least 1 vertex. Let $X$ and $Y$ be the short subcycles. The same arguments also apply in the other two cases.

Suppose that $X= x_1x_2$ and $Y=y$. Note that if $X$ and $Z$ were our short subcycles, this subcase would not be necessary. The vertices $x_1$, $x_2$ and $y$ are necessarily distinct; certainly $y\neq x_2$ since they are both adjacent to $b$, and setting $y=x_1$ would trap $x_2$. Thus, we have the general configuration shown in Figure~\ref{fig:case1c21}(a). Observe that $ubcu$ and $cbyc$ are necessarily facial, else there would be a 2-cut. Suppose that $ux_1x_2bu$ is non-facial, leading to the cyclic orderings of Figure~\ref{fig:case1c21}(b). Divide $Z$ into two segments with $Z'$ from $c$ to either $x_1$ or $x_2$, and $Z''$ from either $x_1$ or $x_2$ to $u$. The point is to split at the unique vertex out of $x_1$ and $x_2$ that is incident to one edge outside of $ux_1x_2bu$ and one inside. Assume we split at $x_2$ (there are analogous circuits if we split at $x_1$). Then $ubycZ'x_2x_1ucbx_2Z''u$ is a good Eulerian circuit unless $z'_2=y$ or $z'_3=y$. If $Z'_2=y$, then we have the configuration resulting from a 3-cycle slide, so the corresponding reduction can be performed and we are done. Otherwise, let $Z'=z_1z_2Z'''$. Then $ubcyZ'''x_2x_1ucz_1z_2ybx_2Z''u$ is a good Eulerian circuit.

\begin{figure}[h]
	\footnotesize
	\centering
\includegraphics[scale=0.45]{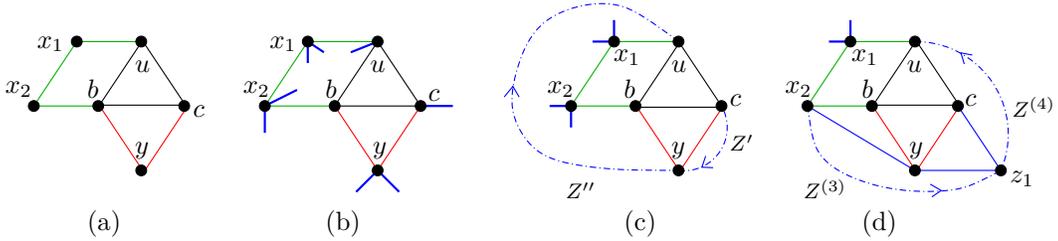}
\put(-39, 48){$u$}
\put(-56, 38){$b$}
\put(-17, 35){$c$}
\put(-60, 52){$x_1$}
\put(-85, 40){$x_2$}
\put(0, 6){$z_1$}
\put(-38, 17){$y$}
\put(1, 31){\scriptsize{$Z^{(4)}$}}
\put(-78,0){\scriptsize{$Z^{(3)}$}}
\put(-56, -12){(d)}
\put(-128, 48){$u$}
\put(-146, 38){$b$}
\put(-106, 35){$c$}
\put(-150, 52){$x_1$}
\put(-175, 40){$x_2$}
\put(-128, 17){$y$}
\put(-106, 18){\scriptsize{$Z'$}}
\put(-168,0){\scriptsize{$Z''$}}
\put(-146, -12){(c)}
\put(-242, 48){$u$}
\put(-259, 38){$b$}
\put(-220, 37){$c$}
\put(-280, 57){$x_1$}
\put(-290, 40){$x_2$}
\put(-241, 17){$y$}
\put(-259, -12){(b)}
\put(-331, 48){$u$}
\put(-349, 38){$b$}
\put(-309, 31){$c$}
\put(-368, 57){$x_1$}
\put(-380, 40){$x_2$}
\put(-331, 17){$y$}
\put(-349, -12){(a)}
\caption{Configurations in Case 1c when $X= x_1x_2$ and $Y=y$.}
\label{fig:case1c21}
\end{figure}

If $ux_1x_2bu$ is facial, split $Z$ at $y$ and write $Z=Z'yZ''$ as depicted in Figure~\ref{fig:case1c21}(c). Consider the circuit $ux_1x_2bcy{Z''}uby{Z'}^{-1}cu$. There are a few potential short subcycles to check. Firstly, if $z''_1=x_2$, there would be a short subcycle in $x_2bcy{Z''}$. Since $ux_2$ is not an edge though (that would trap $x_1$, given the assumption that $ux_1x_2bu$ is facial), either we can apply a 3-cycle unslide centred at $b$ in which case we are done, or $Z'$ has only one vertex, say $z_1$. If the latter is true (Figure~\ref{fig:case1c21}(d)), we can split again at $z_1$ to get $Z''=Z^{(3)}z_1Z^{(4)}$. Then $ux_1x_2bcz_1{Z^{(3)}}^{-1}x_2ybu{Z^{(4)}}^{-1}z_1ycu$ is a good Eulerian circuit unless $Z^{(4)}$ is a single edge, in which case $ux_1x_2yz_1ucbx_2{Z^{(3)}}^{-1}z_1cybu$ suffices. The other potential conflicts occur if $Z''$ is an edge, $z'_{1} = x_1$, $z'_{1} = x_2$, $Z''$ has exactly one vertex, $z'_{1} = x_1$ or $z''_{-1}=z'_{-1}$. The first of these traps $x_1$ and $x_2$, whilst the others create $2$-cuts so we reach contradictions in all cases.

The remaining case is that $X= x_1x_2$ and $Y=y_1y_2$. The four vertices $x_1$, $x_2$, $y_1$ and $y_2$ must all be distinct to avoid trapping other vertices; the possibilities $x_1=y_1$, $x_2=y_2$ and $x_1=y_2$ are shown in Figure~\ref{fig:case122distinct}. 

\begin{figure}[h]
	\footnotesize
	\centering
\includegraphics[scale=0.45]{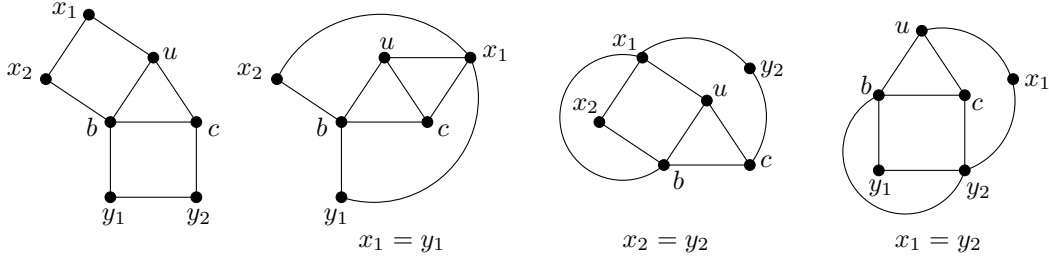}
\put(-324,59){$u$}
\put(-353,30){$b$}
\put(-307,30){$c$}
\put(-347,-2){$y_1$}
\put(-315,-2){$y_2$}
\put(-366,76){$x_1$}
\put(-383,53){$x_2$}
\put(-242,63){$u$}
\put(-266,30){$b$}
\put(-220,30){$c$}
\put(-263,-2){$y_1$}
\put(-203,60){$x_1$}
\put(-296,53){$x_2$}
\put(-250,-12){$x_1=y_1$}
\put(-116,45){$u$}
\put(-131,10){$b$}
\put(-98,18){$c$}
\put(-98,54){$y_2$}
\put(-154,64){$x_1$}
\put(-169,40){$x_2$}
\put(-150,-12){$x_2=y_2$}
\put(-47,69){$u$}
\put(-60,46){$b$}
\put(-18,40){$c$}
\put(-57,9){$y_1$}
\put(-20,8){$y_2$}
\put(2,49){$x_1$}
\put(-47,-12){$x_1=y_2$}
\caption{Traps created when $x_1$, $x_2$, $y_1$ and $y_2$ are not all distinct.}
\label{fig:case122distinct}
\end{figure}

As before, $ubcu$ is necessarily facial. Suppose that both $ux_1x_2bu$ and $by_1y_2cb$ are non-facial. Then $Z$ splits as $Z=Z'y_iZ''x_jZ'''$ where $i,j=1$ or $2$, and $Z'$ and $Z'''$ are contained respectively in the faces $ux_1x_2bu$ and $by_1y_2cb$ (Figure~\ref{fig:case122}(a)). However, this means that $\{x_j,y_i\}$ is a 2-cut, so this configuration cannot occur. 

If $ux_1x_2bu$ is facial but $by_1y_2cb$ is not, we may write $Z=Z'y_iZ''$ with $i=1$ or $2$ so that $Z'$ is contained inside $by_1y_2cb$ (Figure~\ref{fig:case122}(b)). Then $u{Z''}^{-1}y_1y_2cux_1x_2by_1Z'cbu$ is a good Eulerian circuit unless $Z''$ is an edge, but this cannot happen since $x_1$ and $x_2$ would be trapped. We have also used that $Z'$ has at least 5 vertices, which follows from Lemma~\ref{lemma:pathinsquare}. A similar circuit works if $by_1y_2cb$ is facial but $ux_1x_2bu$ is not.

When $ux_1x_2bu$ and $by_1y_2cb$ are both facial (Figure~\ref{fig:case122}(c)), reroute to $ux_1x_2bcZuby_1y_2cu$. Note that $Z$ must contain $x_1$, $x_2$, $y_1$ and $y_2$ so $bcZub$ will not be a short subcycle. The only way a short subcycle can be present here is if $z_{1}=x_2$, $z_{2}=x_2$, $z_{-1}=y_1$ or $z_{-2}=y_1$. The first and third of these trap either $x_1$ or $y_2$. For the second, the situation is as shown in Figure~\ref{fig:case122}(d). Here, a good circuit is given by $uby_1y_2cz_1Z''ucbx_2Z'z_1x_2x_1u$, where we have split $Z$ as $z_1x_2Z'z_1Z''$. The fourth case is symmetric to the second, and a good circuit is given by $ucy_2y_1z_{-1}ux_1x_2bcZ'z_{-1}Z''bu$ where $Z=  Z'z_{-1}Z''y_1z_{-1}$ to separate the segments contained in different faces.

\vspace{3mm}
\begin{figure}[h]
	\footnotesize
	\centering
\includegraphics[scale=0.45]{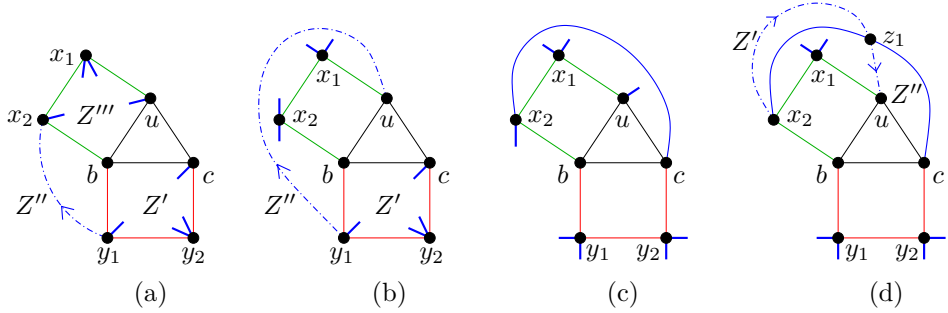}
\put(-28,51){$u$}
\put(-50,30){$b$}
\put(-6,30){$c$}
\put(-39,2){$y_1$}
\put(-20,2){$y_2$}
\put(-25,83){$z_1$}
\put(-52,68){$x_1$}
\put(-61,53){$x_2$}
\put(-82,80){\footnotesize{$Z'$}}
\put(-22,60){\footnotesize{$Z''$}}
\put(-30,-15){(d)}
\put(-126,51){$u$}
\put(-148,30){$b$}
\put(-104,30){$c$}
\put(-137,2){$y_1$}
\put(-118,2){$y_2$}
\put(-150,68){$x_1$}
\put(-159,53){$x_2$}
\put(-129,-15){(c)}
\put(-215,51){$u$}
\put(-237,30){$b$}
\put(-193,30){$c$}
\put(-234,1){$y_1$}
\put(-200,1){$y_2$}
\put(-239,68){$x_1$}
\put(-248,53){$x_2$}
\put(-217,18){\footnotesize{$Z'$}}
\put(-259,18){\footnotesize{$Z''$}}
\put(-218,-15){(b)}
\put(-304,51){$u$}
\put(-326,30){$b$}
\put(-282,30){$c$}
\put(-322,1){$y_1$}
\put(-290,1){$y_2$}
\put(-340,76){$x_1$}
\put(-356,53){$x_2$}
\put(-305,18){\footnotesize{$Z'$}}
\put(-353,18){\footnotesize{$Z''$}}
\put(-330,52){\footnotesize{$Z'''$}}
\put(-308,-15){(a)}
\caption{Configurations in Case 1c when $X=x_1x_2$ and $Y=y_1y_2$.}
\label{fig:case122}
\end{figure}

\noindent\textit{Case 2: Resolving a 4-cycle of the form $ubvcu$}\\
If we ignore the edges in the cycle $ubvcu$, the Eulerian circuit in the rest of the graph is of form $uXbcYvZu$, $uXvYbcZu$ or $uXvYcbZu$, assuming that we visit either $b$ or $v$ before $d$ since orientation is unimportant. These are shown in Figure~\ref{fig:case2}. Again in figures for this case, we will use green to represent $X$, red for $Y$, and blue for $Z$. The second and third circuits can be obtained from the first by reversing the circuit and possibly reflecting the whole picture, so it is enough to assume the form $uXbcYvZu$. Since $ab$ and $bd$ are non-edges, we know that $X$ has at least 2 vertices. 

\begin{figure}[h]
	\small
	\centering
\includegraphics[scale=0.65]{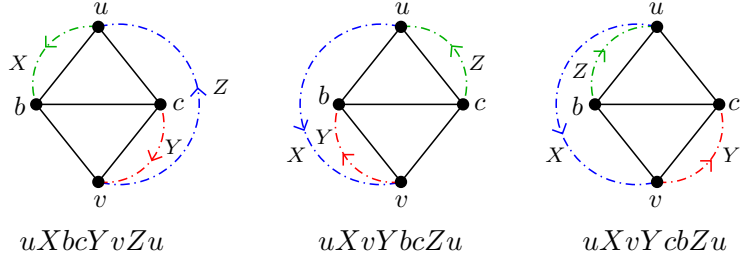}
\put(-268,-23){$uXbcYvZu$}
\put(-155,-23){$uXvYbcZu$}
\put(-55,-23){$uXvYcbZu$}
\put(-30,66){\footnotesize{$u$}}
\put(-30,-7){\footnotesize{$v$}}
\put(-59,28){\footnotesize{$b$}}
\put(0,30){\footnotesize{$c$}}
\put(-69,10){\scriptsize{$X$}}
\put(-2,10){\scriptsize{$Y$}}
\put(-59,42){\scriptsize{$Z$}}
\put(-126,66){\footnotesize{$u$}}
\put(-126,-7){\footnotesize{$v$}}
\put(-155,32){\footnotesize{$b$}}
\put(-96,30){\footnotesize{$c$}}
\put(-167,10){\scriptsize{$X$}}
\put(-156,17){\scriptsize{$Y$}}
\put(-98,45){\scriptsize{$Z$}}
\put(-240,66){\footnotesize{$u$}}
\put(-240,-7){\footnotesize{$v$}}
\put(-270,28){\footnotesize{$b$}}
\put(-210,30){\footnotesize{$c$}}
\put(-195,36){\scriptsize{$Z$}}
\put(-213,13){\scriptsize{$Y$}}
\put(-272,45){\scriptsize{$X$}}
\caption{Circuit configurations with a short subcycle of the form $ubvcu$.}
\label{fig:case2}
\end{figure}

\noindent\textit{Case 2a.} Suppose that $X$ has at least 3 vertices. Then we claim that $ubcYvZucvbX^{-1}u$ is a good Eulerian circuit. The only way that this may fail is if $Z$ has fewer than two vertices. To rule this out, we note that if $Z$ has one vertex as illustrated in Figure~\ref{fig:2ab}(a), then either $z_1$ lies on $X$ making $\{z_1,b\}$ or $\{c,v\}$ a 2-cut, or else it lies on $Y$ but then $\{c,z_1\}$ or $\{u,b\}$ is a 2-cut. If $Z =vu$ were an edge as in Figure~\ref{fig:2ab}(b), then $X$ and $Y$ would be on opposite regions bounded by $ubvu$ and so both $\{c,v\}$ and $\{u,b\}$ would be 2-cuts.

\wl
\noindent\textit{Case 2b.} We now assume that $X=x_1x_2$ and break into three subcases depending on the length of $Y$.
\begin{enumerate}
\itemsep=0mm
\item[2bi.] If $Y$ has at least 3 vertices, consider the circuit $ubcux_1x_2bvcYvZu$. Since having $y_1=x_2$ would obstruct $Z$ (Figure~\ref{fig:2ab}(c)), we can be sure that $x_2bvcy_1$ is not a subcycle. Apart from $ubcu$, which occurs in the new circuit by design, there are no other potential local conflicts. Thus, we are done by case 1.

\item[2bii.] If $Y=y_1y_2$, then we first verify that these are distinct from $x_1$ and $x_2$. Indeed, both $x_1=y_1$ and $x_2=y_2$ obstruct $Z$, whilst $x_1=y_2$ and $x_1=y_1$ create 2-cuts. Hence, the configuration is that shown in Figure~\ref{fig:2ab}(d). We propose the circuit $ux_1x_2bcvZubvy_2y_1cu$. If $z_{-1}=y_2$ then we would have a subcycle $z_{-1}ubvy_2$, but this traps either $y_1$, or $x_1$ and $x_2$. The only other potential short subcycle is $x_2bcvz_1$. If $z_1=x_2$, then a 3-cycle unslide centred at $b$ can be applied to the bold edges of Figure~\ref{fig:2ab}(e); this requires firstly that $ux_2 \not\in E(G)$, and secondly that $c$ and $v$ have only $b$ as a common neighbour. The former holds since the circuit is Eulerian, and for the latter, given that $N(c) = \{u,b,v,y_1\}$ and $N(v) = \{b,c,y_2, z\}$, distinctness can be easily checked.

\begin{figure}[h]
	\footnotesize
	\centering
\scalebox{0.93}{
\begin{subfigure}[t]{\textwidth}
	\centering
\includegraphics[scale=0.5]{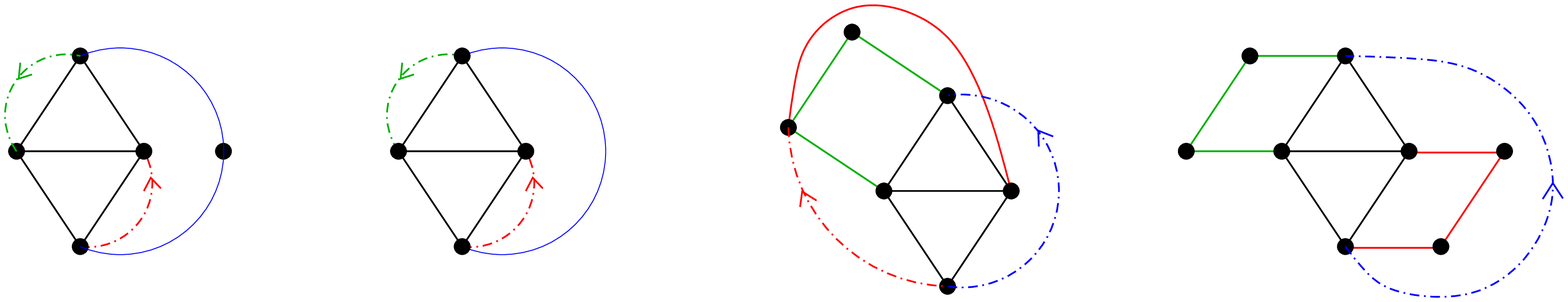}
\put(-178, 62){$u$}
\put(-202, 24){$b$}
\put(-154, 24){$c$}
\put(-178, -7){$v$}
\put(-205, 65){$x_1$}
\put(-216, 47){$x_2$}
\put(-216, 9){\scriptsize{$Y'$}}
\put(-147, 10){\scriptsize{$Z$}}
\put(-316, 74){$u$}
\put(-341, 37){$b$}
\put(-291, 39){$c$}
\put(-318, 4){$v$}
\put(-293, 20){\scriptsize{$Y$}}
\put(-342, 63){\scriptsize{$X$}}
\put(-424, 74){$u$}
\put(-449, 37){$b$}
\put(-399, 39){$c$}
\put(-426, 4){$v$}
\put(-401, 20){\scriptsize{$Y$}}
\put(-380, 34){$z_1$}
\put(-450, 63){\scriptsize{$X$}}
\put(-65, 74){$u$}
\put(-72, 45){$b$}
\put(-44, 46){$c$}
\put(-68, 5){$v$}
\put(-22, 48){$y_1$}
\put(-34,6){$y_2$}
\put(-104, 68){$x_1$}
\put(-122, 40){$x_2$}
\put(-7,11){\scriptsize{$Z$}}
\put(-414, -20){(a)}
\put(-315, -20){(b)}
\put(-195, -20){(c)}
\put(-70, -20){(d)}
\vspace{5mm}
\end{subfigure}}
\scalebox{0.93}{
\begin{subfigure}[t]{\textwidth}
	\centering
\includegraphics[scale=0.5]{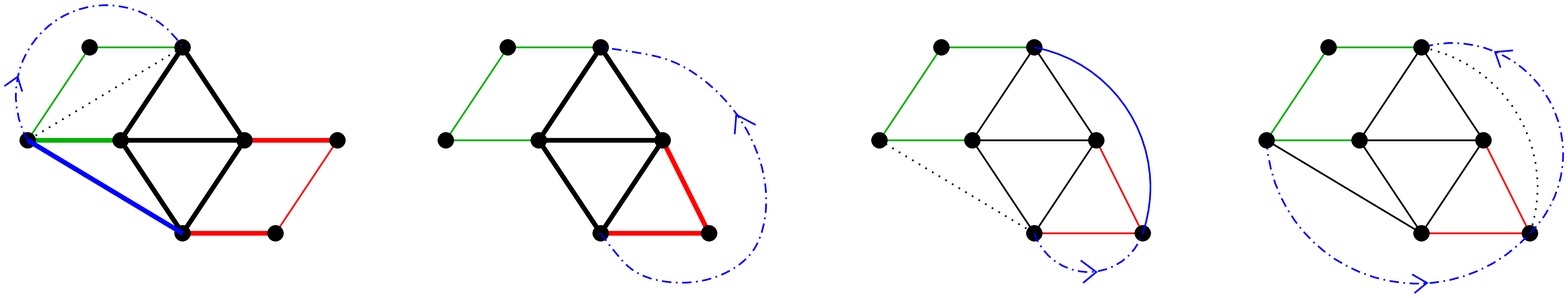}
\put(-45, 77){$u$}
\put(-64, 50){$b$}
\put(-25, 49){$c$}
\put(-45, 8){$v$}
\put(-12,9){$y$}
\put(-83, 71){$x_1$}
\put(-102, 43){$x_2$}
\put(-88,15){\scriptsize{$Z'$}}
\put(-12,70){\scriptsize{$Z''$}}
\put(-45, -12){(h)}
\put(-158, 77){$u$}
\put(-177, 49){$b$}
\put(-138, 50){$c$}
\put(-158, 8){$v$}
\put(-125,9){$y$}
\put(-196, 71){$x_1$}
\put(-207, 35){$x_2$}
\put(-139,-2){\scriptsize{$Z'$}}
\put(-177, -12){(g)}
\put(-283, 77){$u$}
\put(-303, 49){$b$}
\put(-264, 50){$c$}
\put(-284, 8){$v$}
\put(-251,9){$y$}
\put(-322, 71){$x_1$}
\put(-333, 35){$x_2$}
\put(-232,18){\scriptsize{$Z$}}
\put(-303, -12){(f)}
\put(-399, 73){$u$}
\put(-414, 48){$b$}
\put(-385, 50){$c$}
\put(-406, 8){$v$}
\put(-363, 51){$y_1$}
\put(-380,9){$y_2$}
\put(-444, 72){$x_1$}
\put(-456, 36){$x_2$}
\put(-423, -12){(e)}
\end{subfigure}}
\caption{Configurations in Cases 2a and 2bi. The thicker edges identify subgraphs where a reduction operation might be applied, and dotted lines represent non-edges.}
\label{fig:2ab}
\end{figure}

\vspace{-2mm}
\item[2biii.] If $Y$ has a single vertex $y$, then the three adjacent triangles shown in bold in Figure~\ref{fig:2ab}(f) suggests that we could try to apply a 3-cycle unslide centred at $c$. This can be done provided $z_{-1}\neq y$ and $z_1 \neq x_2$. If both $z_{-1}=y$ and $z_1=x_2$, then $ucyvbx_2Z'ubcvx_2x_1u$ is a good Eulerian circuit, where $Z=x_2Z'$ and $Z'$ has at least one vertex $x_1$. If $z_{-1}=y$ and $z_1\neq x_2$ (Figure~\ref{fig:2ab}(g)), then with $Z=Z'y$, a good circuit is given by $ux_1x_2bcvZ'ycubvyu$. Note that $cvZ'yc$ is not a short subcycle since $Z'$ contains at least $x_1$ and $x_2$. Finally, if $z_{-1}\neq y$ and $z_1= x_2$, let's split $Z$ into $x_2Z'yZ''$ (Figure~\ref{fig:2ab}(h)) and try $ux_1x_2vcyZ''ubvy{Z'}^{-1}x_2bcu$. Since $uy$ is not in $G$, it follows that $Z''$ has at least one vertex so $yZ''ubvy$ is not a problem. However, $Z'$ may be an edge. If it is, we can avoid the subcycle $bvyx_2b$ by instead rerouting to $ux_1x_2ycvbu{Z''}^{-1}yvx_2bcu$ which now has no possible conflicts.
\end{enumerate}

\noindent\textit{Case 3: Resolving a 4-cycle of the form $uavdu$}\\
We may assume that $ad$ is a non-edge, otherwise it is symmetric to Case 2. Note that for this short subcycle to occur, the induced circuit must contain the subpath $buavduc$. It is actually convenient to forget about the edge $bc$ being present here.

Suppose firstly that $uavdu$ is facial. The induced circuit can be written as $buavducXvYb$ with cyclic ordering as in Figure~\ref{fig:upper4cycle}(a). Consider the rerouted circuit $buavX^{-1}cudvYb$. The only possible conflicts occur in $buavX^{-1}$ and $cudvY$. These are symmetric, so we will just handle the first case. We know that $x_{-1} \neq b$ since this would trap $a$. However, it is possible that $x_{-2} = a$ or $x_{-3} = a$ which would create a 3-cycle or 4-cycle respectively. If a 3-cycle is created, say $avx_{-1}a$, then reroute to $buax_{-1}vducX'avYb$ where $X=X'ax_{-1}$,. The only place for short subcycles is in $X'avY$, but the bounded region of a short subcycle here would contain $avx_{-1}a$, which is necessarily facial, and trap $x_{-1}$. That settles the 3-cycle case. 

If $x_{-3} = a$ and we have a 4-cycle $avx_{-1}x_{-2}a$, an analogous circuit $buax_{-2}x_{-1}vducX'avYb$ is good provided $avx_{-1}x_{-2}a$ is facial by the same reasoning as before. If it is not facial, then the same circuit fails only when $x_{-4} = y_1$, $x_{-4} = y_2$, or $x_{-5} = y_1$. These create short subcycles in the region bounded by $avx_{-1}x_{-2}a$, so in particular $X$ and $Y$ both have vertices in this region and $X$ contains either $x_{-1}$ or $x_{-2}$ twice. We provide a good Eulerian circuit for the first case only, and assume additionally that $x_{-2}$ occurs twice in $X$ so we can write $X=X'x_{-2}X''y_1ax_{-2}x_{-1}$ (Figure~\ref{fig:upper4cycle}(b)), namely $buay_1{X''}^{-1}x_{-2}x_{-1}vducX'x_{-2}avy_1Yb$. The others possibilities admit very similar circuits, likewise keeping $ducX'$ and $Ybua$ from the original circuit to avoid any trouble at $b$, $c$, and $d$.

If instead $uavdu$ is non-facial, there are three cases depending on which one of $a$, $d$ and $v$ have both half-edges in the same region bounded by $uavdu$, although the first two are symmetric. In the case when it is $v$, the possible induced circuits are $buavducXdYaZb$, which is shown in Figure~\ref{fig:upper4cycle}(c), and $buavducXaYdZb$. The first can be rerouted to the good circuit $buaY^{-1}dX^{-1}cudvaZb$, and second to $budY^{-1}aX^{-1}cuavdZb$ which is completely analogous. When $a$ (symmetrically $d$) has both half-edges pointing inside $uavdu$, the possible induced circuits are $buavducXdYvZb$, shown in Figure~\ref{fig:upper4cycle}(d), and $buavducXvYdZb$. These can be rerouted to the good circuits $buavdX^{-1}cudYvZb$ and $buavX^{-1}cudY^{-1}vdZb$ respectively. In the second circuit, Lemma~\ref{lemma:pathinsquare} ensures that $dY^{-1}vd$ is not a short subcycle.

\vspace{2mm}
\begin{figure}[h]
	\small
	\centering
\includegraphics[scale=0.5]{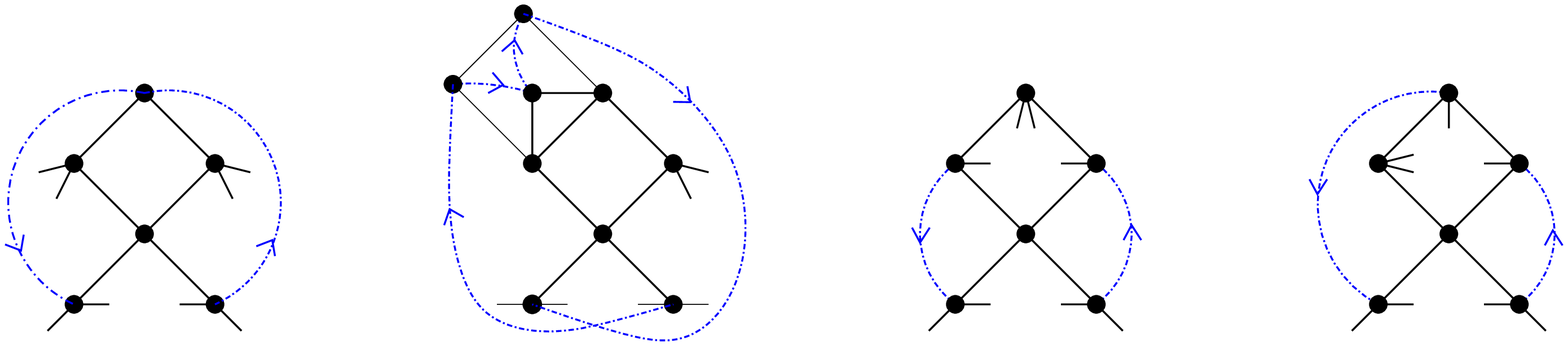}
\put(-57, 44){\footnotesize{$a$}}
\put(-50, 15){\footnotesize{$b$}}
\put(-14, 15){\footnotesize{$c$}}
\put(-8, 44){\footnotesize{$d$}}
\put(-33, 33){\footnotesize{$u$}}
\put(-33, 69){\footnotesize{$v$}}
\put(1, 24){\footnotesize{$X$}}
\put(-73, 24){\footnotesize{$Y$}}
\put(-165, 44){\footnotesize{$a$}}
\put(-158, 15){\footnotesize{$b$}}
\put(-122, 15){\footnotesize{$c$}}
\put(-116, 44){\footnotesize{$d$}}
\put(-141, 33){\footnotesize{$u$}}
\put(-141, 69){\footnotesize{$v$}}
\put(-109, 24){\footnotesize{$X$}}
\put(-178, 24){\footnotesize{$Z$}}
\put(-273, 42){\footnotesize{$a$}}
\put(-267, 15){\footnotesize{$b$}}
\put(-231, 15){\footnotesize{$c$}}
\put(-227, 48){\footnotesize{$d$}}
\put(-250, 33){\footnotesize{$u$}}
\put(-250, 69){\footnotesize{$v$}}
\put(-287, 83){\footnotesize{$x_{-1}$}}
\put(-300, 73){\footnotesize{$x_{-2}$}}
\put(-220, 65){\footnotesize{$Y$}}
\put(-281, 25){\footnotesize{$X$}}
\put(-389, 49){\footnotesize{$a$}}
\put(-384, 15){\footnotesize{$b$}}
\put(-348, 15){\footnotesize{$c$}}
\put(-344, 47){\footnotesize{$d$}}
\put(-367, 33){\footnotesize{$u$}}
\put(-367, 69){\footnotesize{$v$}}
\put(-329, 24){\footnotesize{$X$}}
\put(-409, 24){\footnotesize{$Z$}}
\put(-371,-15){(a)}
\put(-253,-15){(b)}
\put(-146,-15){(c)}
\put(-37,-15){(d)}
\caption{Configurations in Case 3.}
\label{fig:upper4cycle}
\end{figure}

\noindent\textit{Case 4: Resolving a 4-cycle of the form $ubvdu$}\\
This case arises when $C$ has $aubvduc$ as a subpath. Consider the $(c,u)$-trail obtained by removing the edges of $ubvduc$ from $C$, which can be broken into four segments by splitting at each each of $b$, $v$ and $d$. To avoid 2-cuts, there must be two segments in each of the regions bounded by $ubvdu$. We know that $a$ is outside region, so segments inside are between $b$, $c$, $d$, and $v$. To enumerate the cases, we first list the possible pairs of segments that could be inside the short subcycle which are restricted by 4-regularity. These are
\begin{itemize}[noitemsep,topsep=1pt]
\item a $(c,c)$-trail together with a $(b,d)$-trail, $(b,v)$-trail, or $(b,d)$-trail, or
\item any two out of a $(c,d)$-trail, $(c,v)$-trail and $(b,v)$-trail
\end{itemize}
making six possibilities in total. For each of these, we then pair up the degree deficient vertices on $ubvdu$ in all possible ways so that each degree is 4. At this point, we have a collection of 9 diagrams that specify the ends of each segment and the cyclic ordering at each vertex in the short subcycle. These are given in Figure~\ref{fig:case4}. The arrows on the segments indicate one possible induced circuit for each embedding. The circuit is unique up to reversal for (a), (b), (c), and (d), but the last five segment configurations each give rise to two possible forms for the induced circuit.

\begin{figure}[h]
	\small
	\centering
	\scalebox{0.88}{
\includegraphics[scale=0.5]{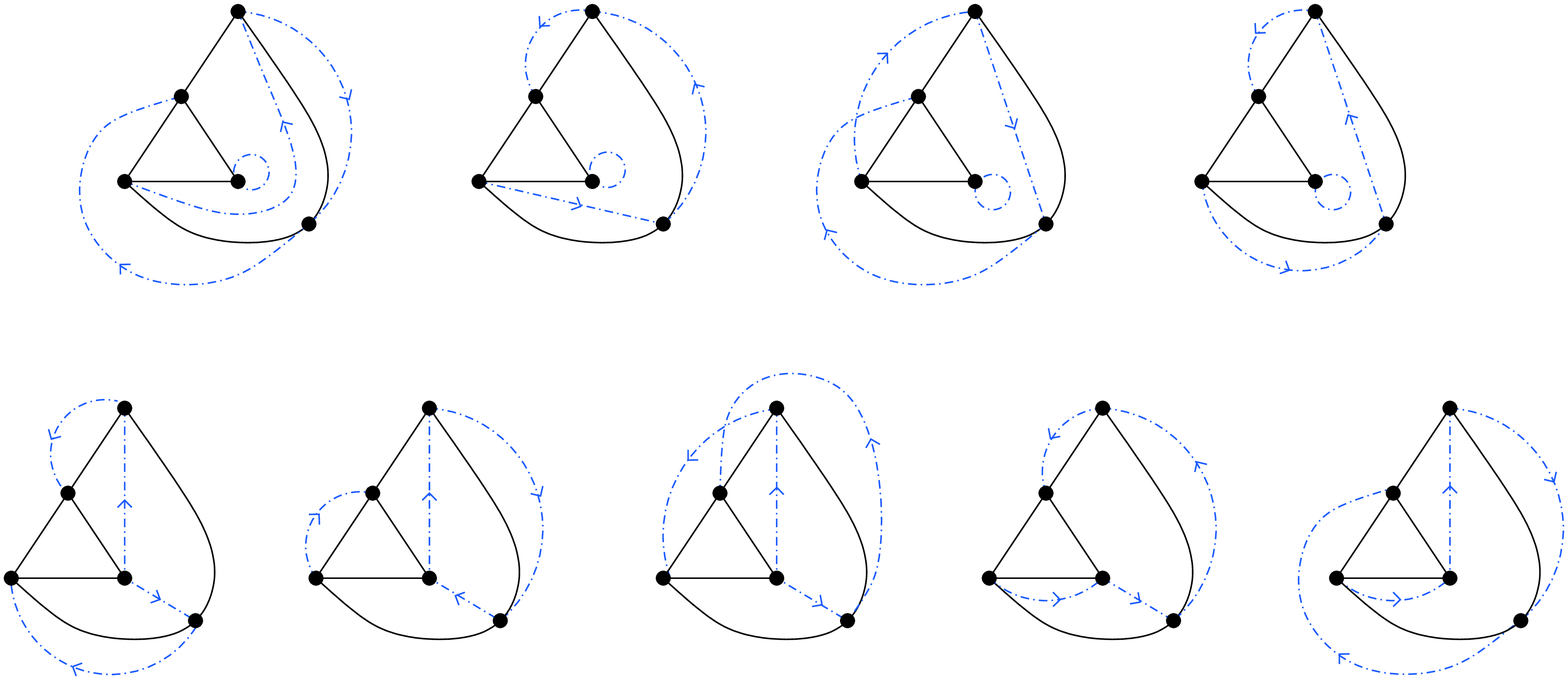}
\put(-444, 176){\footnotesize{$u$}}
\put(-452, 161){\footnotesize{$b$}}
\put(-433, 161){\footnotesize{$c$}}
\put(-426, 218){\footnotesize{$d$}}
\put(-399, 137){\footnotesize{$v$}}
\put(-424, 170){\scriptsize{$W$}}
\put(-424, 187){\scriptsize{$X$}}
\put(-390, 196){\scriptsize{$Y$}}
\put(-462, 137){\scriptsize{$Z$}}
\put(-432,110){(a)}
\put(-331, 176){\footnotesize{$u$}}
\put(-339, 161){\footnotesize{$b$}}
\put(-320, 161){\footnotesize{$c$}}
\put(-312, 218){\footnotesize{$d$}}
\put(-286, 137){\footnotesize{$v$}}
\put(-311, 170){\scriptsize{$W$}}
\put(-315, 142){\scriptsize{$X$}}
\put(-277, 196){\scriptsize{$Y$}}
\put(-341, 197){\scriptsize{$Z$}}
\put(-322,110){(b)}
\put(-209, 176){\footnotesize{$u$}}
\put(-217, 161){\footnotesize{$b$}}
\put(-198, 161){\footnotesize{$c$}}
\put(-191, 218){\footnotesize{$d$}}
\put(-164, 137){\footnotesize{$v$}}
\put(-189, 165){\scriptsize{$W$}}
\put(-228, 196){\scriptsize{$X$}}
\put(-189, 187){\scriptsize{$Y$}}
\put(-227, 137){\scriptsize{$Z$}}
\put(-202,110){(c)}
\put(-101, 176){\footnotesize{$u$}}
\put(-109, 161){\footnotesize{$b$}}
\put(-90, 161){\footnotesize{$c$}}
\put(-82, 218){\footnotesize{$d$}}
\put(-56, 137){\footnotesize{$v$}}
\put(-81, 165){\scriptsize{$W$}}
\put(-119, 137){\scriptsize{$X$}}
\put(-81, 187){\scriptsize{$Y$}}
\put(-110, 196){\scriptsize{$Z$}}
\put(-92,110){(d)}
\put(-480, 49){\footnotesize{$u$}}
\put(-488, 34){\footnotesize{$b$}}
\put(-469, 34){\footnotesize{$c$}}
\put(-460, 91){\footnotesize{$d$}}
\put(-434, 12){\footnotesize{$v$}}
\put(-449, 31){\scriptsize{$W$}}
\put(-492, 0){\scriptsize{$X$}}
\put(-457, 60){\scriptsize{$Y$}}
\put(-492, 70){\scriptsize{$Z$}}
\put(-470,-15){(e)}
\put(-383, 49){\footnotesize{$u$}}
\put(-391, 34){\footnotesize{$b$}}
\put(-372, 34){\footnotesize{$c$}}
\put(-364, 91){\footnotesize{$d$}}
\put(-337, 12){\footnotesize{$v$}}
\put(-409, 52){\scriptsize{$Z$}}
\put(-352, 31){\scriptsize{$Y$}}
\put(-330, 70){\scriptsize{$X$}}
\put(-360, 60){\scriptsize{$W$}}
\put(-370,-15){(f)}
\put(-272, 49){\footnotesize{$u$}}
\put(-280, 34){\footnotesize{$b$}}
\put(-261, 34){\footnotesize{$c$}}
\put(-246, 86){\footnotesize{$d$}}
\put(-226, 12){\footnotesize{$v$}}
\put(-249, 60){\scriptsize{$W$}}
\put(-298, 52){\scriptsize{$X$}}
\put(-241, 31){\scriptsize{$Y$}}
\put(-217, 70){\scriptsize{$Z$}}
\put(-260,-15){(g)}
\put(-168, 49){\footnotesize{$u$}}
\put(-176, 34){\footnotesize{$b$}}
\put(-157, 34){\footnotesize{$c$}}
\put(-149, 91){\footnotesize{$d$}}
\put(-122, 12){\footnotesize{$v$}}
\put(-158, 18){\scriptsize{$W$}}
\put(-137, 30){\scriptsize{$X$}}
\put(-128, 65){\scriptsize{$Y$}}
\put(-175, 70){\scriptsize{$Z$}}
\put(-156,-15){(h)}
\put(-58, 49){\footnotesize{$u$}}
\put(-66, 34){\footnotesize{$b$}}
\put(-47, 34){\footnotesize{$c$}}
\put(-39, 91){\footnotesize{$d$}}
\put(-12, 12){\footnotesize{$v$}}
\put(-48, 18){\scriptsize{$W$}}
\put(-35, 50){\scriptsize{$X$}}
\put(-18, 65){\scriptsize{$Y$}}
\put(-72, 12){\scriptsize{$Z$}}
\put(-46,-15){(i)}
}
\caption{Segment configurations from the induced circuit when $ubvdu$ is a subcycle.}
\label{fig:case4}
\end{figure}

The remainder of the work is in rerouting each possible circuit. We follow the order shown in Figure~\ref{fig:case4}.
\begin{enumerate}[topsep=0pt]
\itemsep=1mm
\item[4a.] We reroute $ubvducWcbXdYvZu$ to the good circuit $udvbXdYvZubcWcu$. Note that $X$ has at least two vertices since otherwise $c$ would either be a cutvertex or $\{c,x_1\}$ would be a 2-cut, so $dvbXd$ is not a short subcycle.

\item[4b.] Starting with $ubvducWcbXvYdZu$, reroute to the good circuit $ucWcbvYdubXvdZu$.

\item[4c.] Similar to 4b, the circuit $ubvducWcbXdYvZu$ can be rerouted to $ubXdYvZudvbcWcu$ which is good.

\item[4d.] Starting with an induced circuit of the form $ubvducWcbXvYdZu$, first reroute to $ubXvYducWcbvdZu$. If $y_{-1}=w_1$, then $y_1ducw_1$ would be a short subcycle. By rerouting again to $ubvducw_1{Y'}^{-1}vX^{-1}bc{W'}^{-1}w_1dZu$ where $W=w_1W'$ and $Y=Y'y_{-1}$, we now have a different circuit with one short subcycle $ubvdu$ which is of the form handled in 4e. By deferring that case, we may assume here that $y_{-1}\neq w_1$. The other potential conflicts occur if there is a short cycle in $ZubX$. This can't be a 3-cycle, since that would require $x_1=z_{-1}=a$ to be adjacent to $d$ which is impossible after a pegging, but there could be a 4-cycle if $x_1=z_{-2}$ or $x_2=z_{-1}$. If either of these hold, we must have at least 3 vertices on $Z$ or $X$ respectively to avoid trapping $z_{-1}$ or $x_1$, so we can instead use the good circuits $ubXvYdZudvbcWcu$ or $ubcWcudvbXvYdZu$.

\item[4e.] From the figure, we have the circuit $ubvducWvXbcYdZu$. The other circuit with segments of the same form can be obtained by reversing the subcircuit $cWvXbc$. Both of these can be rerouted to the good circuit $ucWvXbudY^{-1}cbvdZu$. The key observations to verify this are that $W$ and $Y$ must have at least one vertex to avoid the endpoints of the other segment being a 2-cut, and $uv$ cannot be an edge. These ensure that $budY^{-1}cb$ and $ZucWvX$ do not have short subcycles, for example.

\item[4f.] We have the circuit $ubvducWdXvYcbZu$, and reversing $cWdXvYc$ gives the other possibility. Both of these can be rerouted to the good circuit $uZ^{-1}bvdW^{-1}cbudXvYcu$, and the verification is essentially the same as 4e.

\item[4g.] We have the circuit $ubvducWdXbcYvZu$, and one more given by reversing $cWdXbc$. Both of these can be rerouted to the good circuit $ucYvZudvbcWdXbu$, noting that $W$ and $Y$ have at least one vertex, and $X$ and $Z$ have at least two vertices.

\item[4h.] We have the circuit $ubvducbWcXvYdZu$, which can be rerouted to the good circuit $ubWcbvYdZucXvdu$. The other induced circuit with this segment configuration is given by reversing $cbWc$, and the same solution holds.

\item[4i.] Finally, we have an induced circuit of the form $ubvducbWcXdYvZu$, and again the other circuit is given by reversing $cbWc$. In both of these, each segment has at least one vertex and a good circuit is hence given by $ucYvZudvbcWdXbu$.
\end{enumerate}
\vspace{-1em}
\end{proof}

\begin{figure}[h]
	\footnotesize
	\centering
	\scalebox{0.8}{
\includegraphics[scale=0.5]{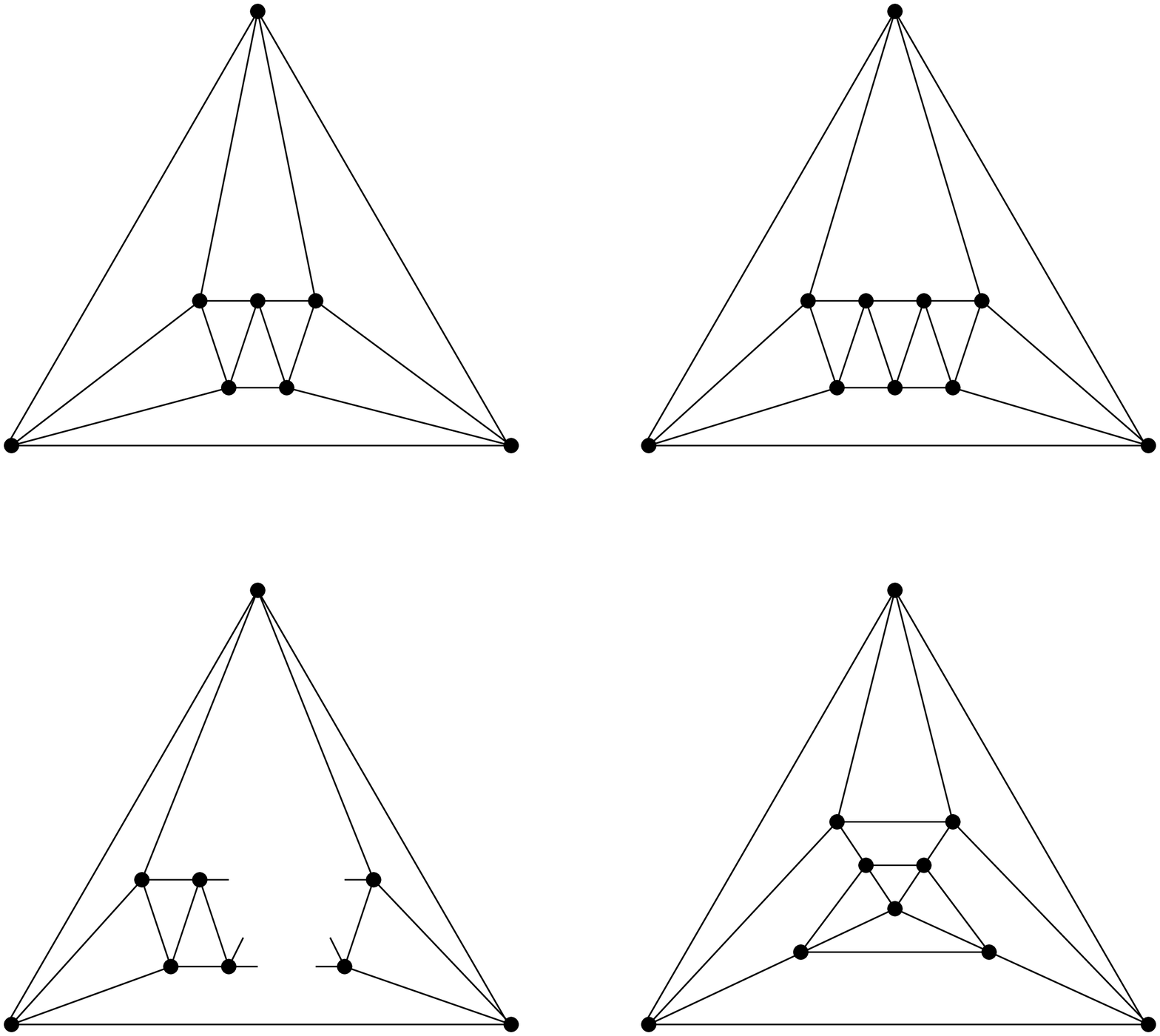}
\put(-283,324){$a$}
\put(-365,172){$b$}
\put(-200,173){$c$}
\put(-313,229){$x_1$}
\put(-285,234){$x_2$}
\put(-258,229){$x_3$}
\put(-295,192){$y_1$}
\put(-275,192){$y_2$}
\put(-84,324){$a$}
\put(-166,172){$b$}
\put(-1,173){$c$}
\put(-123,229){$x_1$}
\put(-95,234){$x_2$}
\put(-76,234){$x_3$}
\put(-51,229){$x_4$}
\put(-105,192){$y_1$}
\put(-86,192){$y_2$}
\put(-67,192){$y_3$}
\put(-283,144){$a$}
\put(-365,-8){$b$}
\put(-200,-7){$c$}
\put(-325,53){$x_1$}
\put(-303,53){$x_2$}
\put(-277,38){$\ldots$}
\put(-270,53){$x_{n-1}$}
\put(-312,11){$y_1$}
\put(-295,11){$y_2$}
\put(-260,11){$y_{n-2}$}
\put(-84,144){$a$}
\put(-166,-8){$b$}
\put(-1,-7){$c$}
\put(-110,65){$d$}
\put(-84,29){$e$}
\put(-59,65){$f$}
\put(-100,51){$v$}
\put(-117,15){$w$}
\put(-53,15){$x$}
\put(-68,51){$y$}
}
\caption{Graphs isomorphic to the 4-antiprism, the 5-antiprism, the $n$-antiprism, and the octahedron with a single 4-cycle addition.}
\label{basecase}
\end{figure}

\begin{proof}[Proof of Theorem~\ref{thm:no34_original}]
To establish the base case, we exhibit Eulerian circuits that are 4-locally self-avoiding for each of the antiprisms except the octahedron. We must also do this for each graph obtained from the octahedron by one expansion operation. To this end, note that the octahedron does not have the local configuration required for pegging. Any 3-cycle slide results in a 3-connected quartic planar graph on 8 vertices, but the 4-antiprism is the unique such graph. Finally, 4-cycle addition results in a 3-connected quartic planar graph on 10 vertices. There are three isomorphism classes of such graphs of which one is the 5-antiprism, and another can be obtained from the 4-antiprism by pegging twice. Consequently, it turns out that there is only one additional graph to check. This together with the antiprisms are shown in Figure~\ref{basecase}, and have the following 4-locally self-avoiding Eulerian circuits with vertex labelling given in the figure;
\begin{itemize}
\itemsep=0mm
\item the 4-antiprism admits the circuit $abcy_2y_1x_1x_2x_3cax_1by_1x_2y_2x_3a$,
\item the 5-antiprism admits $abcy_3y_2y_1x_1x_2x_3x_4cax_1by_1x_2y_2x_3y_3x_4a$,
\item following the previous two particular cases, for a general $n$-antiprism we may take $abcy_{n-2}\ldots y_2y_1x_1x_2 \ldots x_{n-1}cax_1by_1x_2y_2x_3 \ldots x_{n-2}y_{n-2}x_{n-1}a$,
\item the graph obtained from the octahedron by applying a single 4-cycle addition has to good Eulerian circuit $abwevyxcbdafyexwvdfca$.
\end{itemize}

Given an arbitrary 3-connected quartic planar graph $G$, Theorem~\ref{hybridgen} implies the existence of a sequence of 3-connected quartic planar graphs $G_0,G_1\ldots G_n$ such that $G_n=G$, $G_0$ is one of the graphs mentioned in the base case, and $G_{i}$ can be constructed from $G_{i-1}$ by applying either a pegging operation, a 4-cycle addition or a 3-cycle slide for all $1\leq i \leq n$. We have verified above that $G_0$ has a 4-locally self-avoiding Eulerian circuit. Lemmas~\ref{cycleaddition}, \ref{cycleslide} and \ref{pegging} imply that each $G_i$ also admits such a circuit. In particular, this includes $G$.
\end{proof}

\section{Relaxing connectedness conditions}\label{sec:weaken}
We begin with a general lemma which will be used frequently as we relax the 3-connectedness condition in Theorem~\ref{thm:no34_original} over the subsequent sections.
\begin{lemma}\label{lengthlemma}
Let $G$ be a planar graph that is quartic except for four pairwise non-adjacent pendant vertices ($x_1$, $x_2$, $y_1$, and $y_2$), with $x_1a_1, x_2a_2, y_1b_1,y_2b_2 \in E(G)$, $a_1\neq a_2$, and $b_1\neq b_2$. In addition, suppose that there exist two edge-disjoint trails in $G$ that are each 4-locally self-avoiding and together cover $E(G)$. Then there exists another pair of trails with those properties and in addition either
\vspace{-3mm}
\begin{enumerate}
\itemsep=-1mm
\item one is an $(x_1,x_2)$-trail, the other is a $(y_1,y_2)$-trail, and both have length at least 5, or 
\item both are $(x_i,y_j)$-trails ($i,j=1,2$) with length at least 3.
\end{enumerate}
\end{lemma}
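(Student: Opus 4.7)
The plan is to start from the given decomposition $(T_1,T_2)$ of $E(G)$ into two $4$-locally self-avoiding trails. Because the four pendants are the only odd-degree vertices of $G$, each is an endpoint of exactly one $T_i$, so the decomposition induces a pairing of $\{x_1,x_2,y_1,y_2\}$. There are three possible pairings: the ``homogeneous'' pairing $\{x_1,x_2\},\{y_1,y_2\}$ (call this type~$P_1$), and the two ``mixed'' pairings $\{x_i,y_j\},\{x_{3-i},y_{3-j}\}$ (type~$P_2$). If $(T_1,T_2)$ is of type~$P_1$ and both trails have length at least $5$, conclusion~(1) holds; if $(T_1,T_2)$ is of type~$P_2$ and both have length at least $3$, conclusion~(2) holds. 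Otherwise I will modify $(T_1,T_2)$ using a swap operation.

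The tool is as follows: if $T_1=pAvBq$ and $T_2=rCvDs$ share an internal vertex~$v$, replacing them with $T_1'=pAvDs$ and $T_2'=rCvBq$ produces a new edge-disjoint decomposition of $E(G)$ with a different pendant pairing. The new trails certainly cover $E(G)$ and are edge-disjoint, so the only content is to preserve the $4$-locally self-avoiding condition. Any new short subcycle must pass through the ``seam'' at $v$, and the cyclic ordering of the four edges at $v$ (together with planarity) limits the possibilities. When one choice of splitting at $v$ creates a short subcycle, I will either split at a different visit of $v$ (if $v$ appears more than once on some $T_i$), choose a different common vertex, or alter the reconnection to produce the alternate mixed pairing instead.

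The structure of the case analysis is dictated by how $(T_1,T_2)$ can fail the conclusions. If the pairing is type~$P_1$ with, say, $T_1$ short, then $T_1=x_1a_1a_2x_2$ (length~$3$) or $T_1=x_1a_1va_2x_2$ (length~$4$) for some vertex $v$ adjacent to both $a_1$ and $a_2$; in either case the quartic vertices $a_1$ and $a_2$ have most of their incident edges in $T_2$, so $T_2$ visits both, giving candidate swap vertices. A swap at $a_1$ (or at $a_2$) converts the pairing to type~$P_2$, and the resulting trails inherit long prefixes/suffixes from $T_2$, so the length-$3$ requirement of~(2) is met easily. Symmetrically, if the pairing is type~$P_2$ with a length-$\leq 2$ trail $x_ia_iy_j$ (forcing $a_i=b_j$), the other trail contains essentially all of $E(G)$, visits $a_i=b_j$ repeatedly, and a swap at that vertex (or at $a_{3-i}$ or $b_{3-j}$) yields a pair of type~$P_1$ trails both of length well above~$5$, giving~(1).

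The main obstacle, and the bulk of the work, is verifying that in each of these bad configurations at least one swap option avoids creating a forbidden short subcycle. The argument is local: a short subcycle created at the seam would use two edges of $v$ that were previously separated between $T_1$ and $T_2$, and by the planar cyclic order at $v$ together with the structural restrictions coming from pairwise non-adjacency of the pendants and $a_1\neq a_2$, $b_1\neq b_2$, only a bounded list of obstruction patterns can arise. I will enumerate these patterns; in each, either a second swap option (at a different common vertex on the long trail) resolves the obstruction, or the pattern itself forces $G$ to admit the desired decomposition directly. The length bounds then follow because in every successful swap the new trails retain long unmodified portions of the original trails.
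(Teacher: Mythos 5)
The swap idea — resplicing the two trails at a shared interior vertex to change the endpoint pairing — is exactly the mechanism the paper uses, so the overall strategy is sound. However, your handling of the mixed-pairing case with a length-$2$ trail is flawed in a way that would not survive being carried out.

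Suppose the pairing is type $P_2$ with $T_1 = x_i z y_j$ where $z = a_i = b_j$, and $T_2 = x_{3-i} C z D y_{3-j}$. First, $z$ does \emph{not} appear ``repeatedly'' in $T_2$: two of its four edges are already spent on $T_1$, so it has exactly two edges in $T_2$ and occurs exactly once there. More seriously, your proposal to resplice into a type $P_1$ pair gives $x_i z C^{-1} x_{3-i}$ and $y_j z D y_{3-j}$, of lengths $|C|+2$ and $|D|+2$. The hypotheses only force $|C|, |D| \ge 1$ (since $a_{3-i} \neq z$ and $b_{3-j} \neq z$), so one of these trails could easily have length $3$ — far short of the $5$ required by conclusion~(1), and ``well above 5'' has no justification. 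Nor can you fall back to swapping at $a_{3-i}$ or $b_{3-j}$: these vertices are not on $T_1$, so there is no seam. The correct move, and what the paper does, is to resplice into the \emph{other} $P_2$ pairing, $x_i z D y_{3-j}$ and $x_{3-i} C z y_j$; these have length at least $3$, which is all conclusion~(2) requires, and are automatically $4$-locally self-avoiding because $z$ is unique in $T_2$ and hence cannot recur in $C$ or $D$.

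A secondary point: you lean on planarity and the cyclic order at $v$ to bound the possible obstructions, but this lemma's proof needs neither. The reason new short subcycles do not arise is purely combinatorial: each splice vertex ($z$ in the $P_2$ case; $a_1$, $c$, $a_2$ in the $P_1$ length-$3$ or $4$ case) has only two edges on the long trail and hence appears there exactly once, so it cannot recur near the seam; and by ordering $y_1, y_2$ so that $a_1$ precedes $a_2$ on $Q$ and branching on where $c$ falls relative to those two splits, the finitely many candidate resplicings can each be checked to introduce no vertex-repeat within distance $4$ of the seam. Bringing planarity into the argument at this stage is not just unnecessary — it suggests the wrong reason the construction works.
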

\begin{proof}
Given the trails in the hypotheses, suppose firstly that they are both $(x_i,y_j)$-trails ($i,j=1,2$). We may write them as $T_1=x_1Py_1$ and $T_2=x_2Qy_2$ after possibly relabelling $y_1$ and $y_2$. Neither trail has length 1 since the pendant vertices are non-adjacent, and they cannot both have length 2 since $a_1\neq a_2$. Thus, if they do not already satisfy the conclusion of the lemma, we may assume without loss of generality that $T_1$ has length 2 and $T_2$ has length at least 3. Let $z=a_1=b_1$ be the vertex in $P$, and split $Q=Q'zQ''$. As $a_2$ and $b_2$ are both distinct from $z$, it follows that $Q'$ and $Q''$ both have length at least 2, so the trails $x_1zQ''y_2$ and $x_2Q'zy_1$ satisfy the stipulated properties.

The other possibility is that we have trails of the form $x_1Px_2$ and $y_1Qy_2$. Given the preceding argument, it suffices to extract two edge-disjoint 4-locally self-avoiding $(x_i,y_j)$-trails that cover $E(G)$. If they do not already satisfy the conclusions of the statement, then without loss of generality we may assume that $x_1Px_2$ has length 3 or 4. In the former case, let $P=a_1a_2$. By possibly relabelling $y_1$ and $y_2$, we may assume that $a_1$ appears before $a_2$ in $Q$ (oriented from $y_1$ to $y_2$), then split $Q=Q'a_1Q''$ and take as our news trails $x_1a_1Q''y_2$ and $x_2a_2a_1{Q'}^{-1}y_1$. 

If $x_1Px_2$ has length 4, then write it as $x_1a_1ca_2x_2$. By possibly relabelling $y_1$ and $y_2$, we may assume that $a_1$ occurs before $a_2$ on $Q$, and split $Q=Q'a_1Q''a_2Q'''$. If $c\in Q'''$, then take the new trails $x_1a_1Q''a_2Q'''y_2$ and $x_2a_2ca_1{Q'}^{-1}y_1$. The situation is symmetric if $c\in Q'$, in which case we take the new trails $x_1a_1ca_2Q'''y_2$ and $x_2a_2{Q''}^{-1}a_1{Q'}^{-1}y_1$. If $c\in Q''$, then either of these pairs will do.
\end{proof}

We typically pair Lemma~\ref{lengthlemma} with the following construction. Let $H$ be a plane graph that is quartic except for four non-adjacent pendant vertices which lie on the same face $F$. Then $H$ can be \emph{completed into a quartic plane graph} by adding a vertex $z$ in $F$ adjacent to each of the four pendant vertices. There is a unique way to add four edges so that $z$ is surrounded by triangular faces. The resulting graph $H'$ is still simple, quartic and planar. This is useful because our theorems that guarantee good Eulerian circuits require these as a minimum. Given a graph $H$ with the required degree sequence and (non-)adjacencies, we can complete it into a quartic planar graph, apply some theorem or induction hypothesis to find a good Eulerian circuit, and then reverse the construction to obtain a pair of good trails satisfying the hypotheses of Lemma~\ref{lengthlemma}.

\begin{figure}[ht]
	\footnotesize
	\centering
	\unitlength=1cm
	 \scalebox{1}{
\begin{picture}(12.7,2.2)(0,0)
  \put(0,0){\includegraphics[scale=0.5]{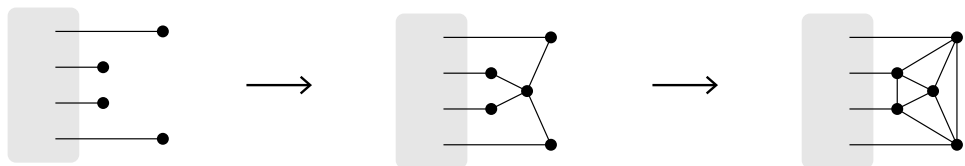}}
\end{picture}}
\caption{Completing $H$ into a quartic planar graph.}
\label{fig:3ec}
\end{figure}

One way to obtain pendant vertices from a planar graph is to \emph{split a vertex of degree 2}. That is, if we have a vertex $x$ in $G$ with $N(x) = \{u,v\}$, then the graph obtained by splitting $x$ is given by taking $G-x$ and adding two new vertices $x_1$ and $x_2$ together with the edges $ux_1$ and $vx_1$. These new edges replace $ux$ and $vx$ in the cyclic ordering at $u$ and $v$ respectively. More generally, we can split vertices of degree $d$ into $d$ pendant vertices. The inverse operation is given by \emph{identifying} pendant vertices.

\subsection{Down to 3-edge-connected}\label{sec:3edgecon}

\begin{theorem}\label{thm:no34edgecon}
Every 3-edge-connected quartic planar graph except the octahedron has a 4-locally self-avoiding Eulerian circuit.
\end{theorem}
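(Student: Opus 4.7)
The plan is to induct on the number of type (a) 2-vertex-cuts in $G$, using Theorem~\ref{thm:no34_original} as the base case. If $G$ has no type (a) 2-vertex-cut, then by inspecting Figure~\ref{fig:allcuts} every cutvertex and every 2-vertex-cut of types (b)--(e) in a quartic graph implies the existence of a 2-edge-cut. Since $G$ is 3-edge-connected, $G$ has no such cuts, so $G$ is 3-connected. As $G\neq $ octahedron, Theorem~\ref{thm:no34_original} gives the desired circuit.

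For the inductive step, let $\{u,v\}$ be a type (a) 2-vertex-cut of $G$ with sides $A$ and $B$. By the definition of type (a), each of $u$ and $v$ has exactly two neighbours in $A$ (say $a_1\neq a_2$ for $u$ and $b_1\neq b_2$ for $v$) and two in $B$. In side $A$, split $u$ into pendants $x_1,x_2$ (with $x_i a_i$ an edge) and $v$ into pendants $y_1,y_2$ (with $y_i b_i$ an edge); the four pendants lie on a common face of the resulting plane graph $A_0$, so we may complete $A_0$ to a quartic plane graph $A'$ as described in Section~\ref{sec:weaken}. Do likewise on side $B$ to obtain $B'$. Both $A'$ and $B'$ are smaller than $G$, and a routine check shows they are 3-edge-connected, have strictly fewer type (a) 2-vertex-cuts, and are not isomorphic to the octahedron (small exceptional configurations, where $A_0$ or $B_0$ has very few vertices, can be handled by ad hoc arguments or by treating them as base cases).

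By the induction hypothesis, $A'$ and $B'$ admit good Eulerian circuits. Removing the completion gadget from each circuit yields a pair of edge-disjoint 4-locally self-avoiding trails in $A_0$, respectively $B_0$, whose endpoints are the pendant vertices and which together cover $E(A)$, respectively $E(B)$. Applying Lemma~\ref{lengthlemma} to each side, we may assume each pair falls into one of the two prescribed configurations: two $(x_i,y_j)$-trails of length at least $3$, or an $(x_1,x_2)$-trail together with a $(y_1,y_2)$-trail each of length at least $5$. Identifying the pendants back to $u$ and $v$ and concatenating trails from $A$ with trails from $B$ at $u$ and $v$ produces an Eulerian circuit of $G$; since the cut is of type (a), the way the half-edges at $u$ (and $v$) are paired across the two sides is prescribed by the embedding, giving finitely many candidate circuits.

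The main obstacle is verifying that among the resulting candidate circuits at least one is 4-locally self-avoiding. No short subcycle can lie wholly within one side (the trails we began with were themselves 4-locally self-avoiding), so any offending cycle must cross the cut, and therefore traverse both $u$ and $v$ using a segment of length at most $2$ on each side. The length bounds in Lemma~\ref{lengthlemma} are calibrated exactly to preclude this: an $(x_i,y_j)$-trail contributes a segment of length at least $3$ between $u$ and $v$, while an $(x_1,x_2)$- or $(y_1,y_2)$-trail of length at least $5$ cannot be reused in a short cross-cut cycle. A finite case analysis over the two configurations on each side and the possible pairings at $u$ and $v$ completes the argument.
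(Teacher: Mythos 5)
Your overall framework — induction on type (a) 2-vertex-cuts, splitting each cut vertex into pendants, completing to a smaller quartic plane graph, applying the induction hypothesis and Lemma~\ref{lengthlemma}, then reassembling — matches the paper's approach, and your verification that the auxiliary graphs satisfy the induction hypothesis is essentially correct. However, the "finite case analysis over possible pairings" that you defer to at the end hides a genuine obstruction that the paper has to work quite hard to overcome.

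The problem arises when Lemma~\ref{lengthlemma} hands you the \emph{circuit} configuration on both sides: a good closed $(x,x)$-trail $xPx$ and a good closed $(y,y)$-trail $yQy$ in $A$, together with analogous $xRx$ and $ySy$ in $B$. Concatenating at $x$ and at $y$ produces two disjoint circuits $xPxRx$ and $yQySy$, not a single Eulerian circuit, and no reordering or reversal of the four closed trails fixes this: every visit to $x$ pairs an $A$-edge with a $B$-edge, and likewise at $y$, but the resulting circuit partition of $E(G)$ is still two pieces. Your claim that "concatenating trails from $A$ with trails from $B$ at $u$ and $v$ produces an Eulerian circuit of $G$" is false in this configuration, and your phrase "the way the half-edges at $u$ are paired... is prescribed by the embedding" is also not right — the Eulerian circuit is free to pair half-edges at a $4$-valent vertex in any of the three ways regardless of the embedding; it is the structure of the four trails that forces the unwanted pairing. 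The paper escapes this trap by a two-pronged argument: if $x$ and $y$ share a neighbour $v$, one of the closed $A$-trails can be rerouted through $v$ to weave the four circuits into one; if they share no neighbour, a \emph{different} auxiliary graph $A''$ is used, formed by identifying $x$ and $y$ into a single degree-$4$ vertex rather than by splitting-and-completing. The induction hypothesis applied to $A''$ then produces either a pair of $(x,y)$-trails (reducing to the easy cases) or a single Eulerian circuit of $A$ passing through both $x$ and $y$, and this in turn requires checking a small exceptional case where $A''$ is the octahedron and exhibiting trails by hand. None of this is a "finite case analysis over pairings"; it requires introducing a second, structurally different reduction, and your sketch as written does not provide a route to it.
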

\begin{proof}
We use induction on the number of 2-vertex-cuts, which in the 3-connected setting are all of type (a) (see Figure~\ref{fig:allcuts}). The base case is given by Theorem~\ref{thm:no34_original}. Let $G$ be a 3-edge-connected quartic planar graph with at least one 2-vertex-cut $\{x,y\}$, and fix a planar embedding. Denote the sides of the cut by $A$ and $B$. We shall construct an auxiliary graph from each side on which to apply the induction hypothesis. Let us proceed for now just with $A$. Split $x$ into $x_1$ and $x_2$, and $y$ into $y_1$ and $y_2$. The resulting graph can be completed into a quartic planar graph by the construction described earlier. Denote this graph by $A'$ with new vertex $z$, and let $K:= N(z) \cup\{z\}$. We claim that $A'$ is 3-edge-connected and has fewer 2-vertex-cuts than $G$.

We first show that $A'$ is 3-edge-connected. Since $G$ is 3-edge-connected, it follows that $A'-K$ is connected, so deleting any two edges with ends in $K$ cannot disconnect $A'$. Thus, if $\{e,f\} \subset E(A')$ is a 2-edge-cut, then at most one of these edges has an end in $K$. Suppose such a cut were to exist. Then as $K$ is contained in one component of $A'-\{e,f\}$, this would mean that $\{e,f\}$ is also an edge-cut in $G$, giving a contradiction.

It is similar to show that $A'$ has fewer 2-vertex-cuts than $G$. Let $\{u,v\}$ be a 2-vertex-cut in $A'$. Again since $A'-K$ is connected, it follows that $u$ and $v$ cannot both be in $K$. If $u$ is in $K$ and $v$ is in $V(A) - \{x,y\}$, then $v$ would be a cutvertex in $A'-u$. However, the remaining vertices of $K$ are all in the same component of this cut, so $v$ must be a cutvertex in $G$ which is also not possible. From here, it suffices to show that whenever $G-\{u,v\}$ is connected, then $A'-\{u,v\}$ is also connected for $u,v\in V(A) - \{x,y\}$, noting that $\{x,y\}$ is no longer present. There are certainly paths between any two vertices in $K$. Since $G-\{u,v\}$ is connected, there is a path $P_{st}$ between any two vertices $s$ and $t$ in $A$. If $P_{st}$ avoids $x$ and $y$, then it is also a path in $A'-\{u,v\}$. If it does traverse $x$, then it must have a subpath $xPy$, and to obtain a path in $A'$ we can replace this subpath with one in $K$ of the form $x_izy_j$. If exactly one of $s$ and $t$ are in $K$, say $t$, concatenate an $(s,x_i)$-path obtained from $P_{sx}$ with an appropriate $(x_i,t)$-path in $K$. 

Since $G$ is simple, $A-\{x,y\}$ cannot be a single vertex, so $A'$ is not isomorphic to the octahedron. Thus, by the induction hypothesis, there is a good Eulerian circuit in $A'$. Deleting $z$ as well as all edges between $x_1$, $x_2$, $y_1$ and $y_2$ leaves two edge-disjoint trails that together cover all edges of the remaining graph and are individually 4-locally self-avoiding. In particular, Lemma~\ref{lengthlemma} applies, so we may assume that we either have a good $(x_1,x_2)$-trail and a good $(y_1,y_2)$-trail both with length at least 5, or two $(x_i,y_j)$-trails ($i,j=1,2$) with length at least 3. Identifying $x_1$ and $x_2$ back into $x$, and $y_1$ and $y_2$ into $y$ means that we either have two good edge-disjoint circuits, one containing $x$ and the other containing $y$, that together cover all of the edges in $A$, or two good edge-disjoint $(x,y)$-trails that again cover all of the edges in $A$. Applying the same construction to $B$, we have in total four good trails that together cover $G$. To sew these together, we break into cases depending on which of these are closed.

\begin{enumerate}[label=(\alph*), topsep=0pt]
\item Suppose that $A$ and $B$ both have $(x,y)$-trails as described above. Let's call the trails $xPy, xQy\subset A$ and $yRx,ySx \subset B$. Then $xPyRxQySx$ is an Eulerian circuit in $G$. Since all the trails have length at least 3, the subcircuits $xPyRx$, $xQySx$, $yRxQy$ and $ySxPy$ all have length at least six so there are no short subcycles.

\item Suppose that one side, say $A$, has two $(x,y)$-trails $xPy$ and $yQx$, whilst the other, $B$, has two circuits $xRx$ and $ySy$ of length at least 5. Then $xRxPySyQx$ is a good Eulerian circuit in $G$.

\item Suppose that $A$ and $B$ each have two circuits, say $xPx$ and $yQy$ in $A$ and $xRx$ and $ySy$ in $B$. If there is some $v \in N(x) \cap N(y)$, assume without loss of generality that $v\in A$, and then write those trails as $xP'vx$ and $yQ'vy$, reversing $P$ or $Q$ if needed. Now $xP'vySyQ'vxRx$ is a good circuit in $G$. 

Otherwise, if $N(x) \cap N(y)$ is empty, then we will use a different auxiliary graph. Identify $x$ and $y$ in $A$ into a single vertex, say $u$, to obtain a graph $A''$. One can check that $A''$ is a 3-edge-connected simple quartic planar graph with fewer 2-vertex-cuts than $G$ using a very similar argument to what was used for the previous construction, so we leave out those details. In the same manner, construct $B''$ from $B$. Let's assume for now that they are not isomorphic to the octahedron. Then by the induction hypothesis, each of these has a good Eulerian circuit. If we split the identified vertex $u$ into four pendant vertices, then the conditions of Lemma~\ref{lengthlemma} are satisfied and, reversing the construction, we again have two trails in each of $A$ and $B$. If at least one of them has two $(x,y)$-trails, then it is handled by case (a) or (b). If both sides again have closed trails, then the good Eulerian circuit in $A''$ induces a good Eulerian circuit in $A$ (the vertices traversed directly before and after $x=y$ are either both neighbours of $x$, or both neighbours of $y$) and can be written as $xPx$, and similarly $xRx$ in $B$. These are easily combined into a good Eulerian circuit $xPxRx$ in $G$. If $A''$ is isomorphic to the octahedron, then $A$ must be isomorphic to the graph $H$ shown in Figure~\ref{fig:3ec} and we can use the explicit $(x,y)$-trails drawn in red and blue.
\end{enumerate}
\vspace{-1em}
\end{proof}

\begin{figure}[ht]
	\footnotesize
	\centering
	\unitlength=1cm
	 \scalebox{1}{
\begin{picture}(10,5)(0,0)
  \put(0.2,3.5){\includegraphics[scale=0.5]{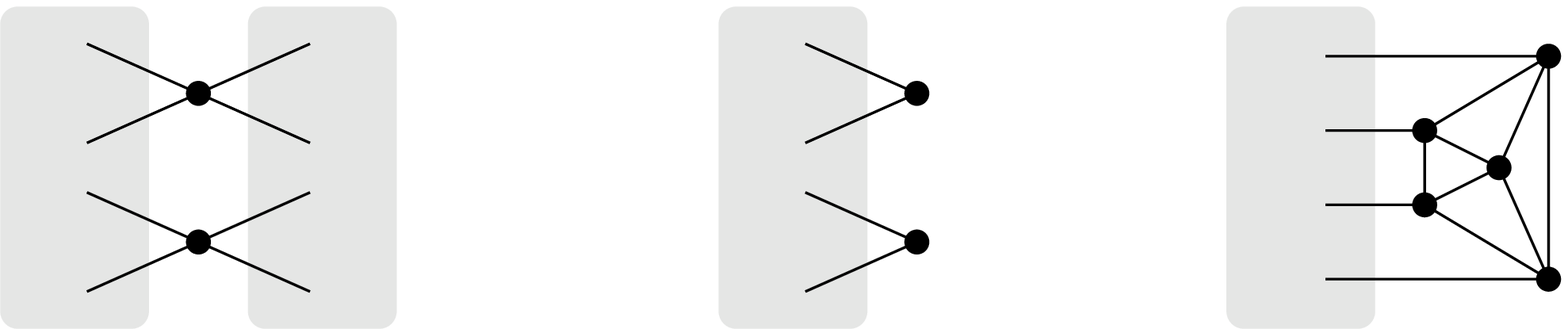}}
  \put(0.2,0.4){\includegraphics[scale=0.5]{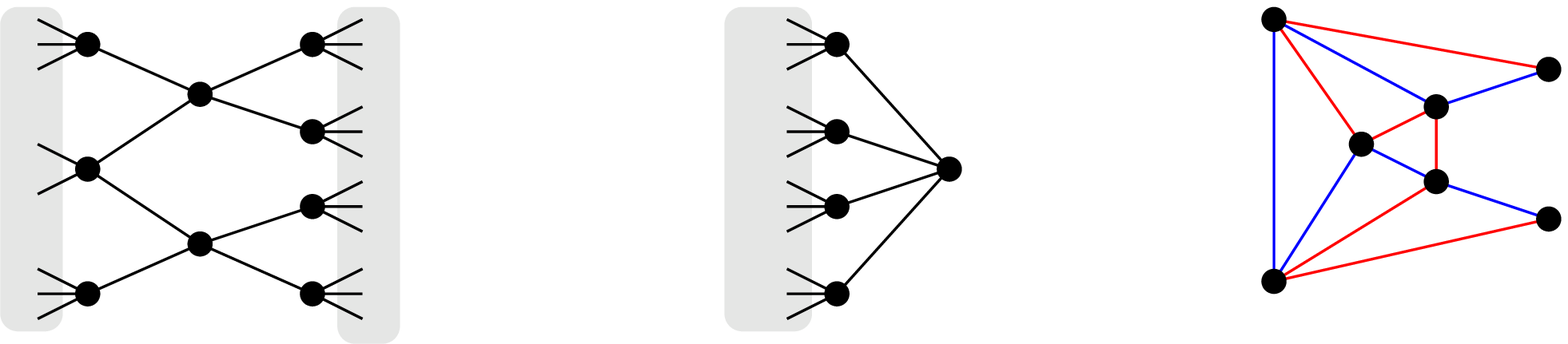}}
  \put(1.3,3){$G$}
  \put(1.4,5.2){$x$}
  \put(1.4,3.7){$y$}
  \put(5.3,3){$A$}
  \put(6.25,4.9){$x$}
  \put(6.25,4){$y$}
  \put(9,3){$A'$}
  \put(9.1,4.95){$x_1$}
  \put(9.1,4){$y_1$}
  \put(9.6,4.7){$z$}
  \put(10.3,5.2){$x_2$}
  \put(10.3,3.8){$y_2$}
  \put(1.3,-0.1){$G$}
  \put(1.4,2.2){$x$}
  \put(1.4,0.6){$y$}
  \put(0.7,1.7){$v$}
  \put(5.3,-0.1){$A''$}
  \put(6.4,1.45){$u$}
  \put(9,-0.1){$H$}
  \put(10.2,2.1){$x$}
  \put(10.2,1.1){$y$}
\end{picture}}
\caption{Construction of auxiliary graphs in the proof of Theorem~\ref{thm:no34edgecon}.}
\label{fig:3ec}
\end{figure}

\subsection{Further down to 2-connected}\label{sec:2con}
\begin{theorem}\label{thm:2con}
Let $G$ be a 2-connected quartic planar graph. Then $G$ has a 4-locally self-avoiding Eulerian circuit if and only if it does not contain $F_6$ (Figure~\ref{fig:obstructions}) as a subgraph.
\end{theorem}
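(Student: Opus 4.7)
The plan is to prove the theorem by induction on the number of 2-edge-cuts in $G$, mirroring the strategy of Theorem~\ref{thm:no34edgecon}. The base case ($G$ has no 2-edge-cut, hence $G$ is 3-edge-connected) is exactly Theorem~\ref{thm:no34edgecon}; the octahedron is the unique 3-edge-connected exception, and since it contains $F_6$ and admits no good Eulerian circuit, both directions of the statement hold there.

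For necessity I will suppose $F_6 \subseteq G$, labelled as in Figure~\ref{fig:obstructions}. The only degree-$3$ vertices of $F_6$ are $b$ and $c$, so each needs exactly one more edge in the quartic $G$. The only additional edge inside $V(F_6)$ that doesn't over-saturate another vertex is $bc$, which forces $G$ to be the octahedron. Otherwise the two extra edges $bb'$, $cc'$ have distinct outside endpoints and together form a $2$-edge-cut separating $V(F_6)$ from the rest of $G$. In any would-be good Eulerian circuit of $G$ these two cut edges are each traversed once, producing exactly two transitions between $F_6$ and its complement, so the circuit restricted to $F_6$ is a single Eulerian trail from $b$ to $c$ covering all $11$ edges. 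The hard part of necessity is to show no such Eulerian trail of $F_6$ is 4-locally self-avoiding: by a degree count, each vertex of $F_6$ is visited exactly twice by the trail, and a case analysis (exploiting the six triangles $xad, xac, xbd, yad, yac, ybd$ of $F_6$) shows that every Eulerian $(b,c)$-trail must contain a 3- or 4-cycle as a subcycle.

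For the inductive step I will assume $G$ is $F_6$-free and choose a 2-edge-cut $\{e_1 = u_1 v_1,\ e_2 = u_2 v_2\}$ with sides $A$, $B$ and $u_i \in V(A),\ v_i \in V(B)$; 2-connectedness of $G$ forces $u_1 \neq u_2$ and $v_1 \neq v_2$. I will build an auxiliary quartic planar graph $A^*$ from $A$ by installing a small quartic gadget (via the splitting-and-completion machinery described in the introduction to Section~\ref{sec:weaken}) in place of the removed cut edges, so that $A^*$ is simple, quartic, planar, and 2-connected, with strictly fewer 2-edge-cuts than $G$. Construct $B^*$ analogously. I will then verify that neither auxiliary graph contains $F_6$, arguing that any $F_6$ in $A^*$ or $B^*$ using vertices of the gadget would correspond via local inspection to an $F_6$ already present in $G$, contradicting the hypothesis. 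By the induction hypothesis, $A^*$ and $B^*$ admit good Eulerian circuits; stripping the gadgets and invoking Lemma~\ref{lengthlemma} extracts two good edge-disjoint trails in each side with endpoints in $\{u_1, u_2\}$ or $\{v_1, v_2\}$, of sufficient length, occurring either as two endpoint-to-endpoint trails or as two closed trails. Finally I will sew the four trails together across $e_1, e_2$ by case analysis on the resulting combinations, in the style of the sewing at the end of Theorem~\ref{thm:no34edgecon}, verifying that no short subcycle is produced at the junctions.

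The principal obstacle will be the necessity direction, where careful case analysis on the Eulerian $(b,c)$-trails of $F_6$ is needed to confirm that each forces a short subcycle. A secondary concern is ensuring the auxiliary construction does not introduce a spurious $F_6$ in $A^*$ or $B^*$, particularly in degenerate small cases such as when $A - \{u_1, u_2\}$ or $B - \{v_1, v_2\}$ is small.
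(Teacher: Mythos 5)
Your overall skeleton is right---induction on the number of $2$-edge-cuts, with Theorem~\ref{thm:no34edgecon} as the base case, and necessity established by analysing Eulerian $(b,c)$-trails of $F_6$---but there are two genuine gaps in the inductive step.

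\textbf{First, your $F_6$-freeness claim fails.} You assert that any $F_6$ in $A^*$ ``would correspond via local inspection to an $F_6$ already present in $G$.'' This is not true. For example, in the simplest case where $u_1$ and $u_2$ are not adjacent and $A^*$ is obtained from $A$ by adding the edge $u_1u_2$, the graph $A$ might contain a copy of $F_6 - e$ (with $e = u_1u_2$) incident to the cut; then $A^*$ contains a genuine copy of $F_6$ that was not present in $G$. The same problem occurs for the other gadget constructions (contracting $xy$, or splitting a common neighbour and completing) when $A$ contains a suitable ``near-$F_6$'' configuration near the cut. The paper handles this by introducing a class $\mathcal{C}$ of graphs ($F_6 - e$, $G_7$, $G_7 - e$, $K_4$) such that an $F_6$ can appear in the auxiliary graph only when the chosen $2$-edge-cut is incident to a $\mathcal{C}$-subgraph, and then either choosing a different cut or, when that is impossible (every $2$-edge-cut is incident to such a subgraph), explicitly exhibiting good trails through the $\mathcal{C}$-subgraphs and chaining them. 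Without something like this, your induction hypothesis cannot be applied.

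\textbf{Second, your trail-extraction and sewing step is structurally mismatched to a $2$-edge-cut.} Removing the cut edges from $G$ leaves exactly two degree-$3$ vertices in each side; an Eulerian circuit of $G$ traverses each cut edge once, so its restriction to $A$ is a \emph{single} Eulerian $(u_1,u_2)$-trail, not a pair of edge-disjoint trails among four pendants. The correct reduction is thus: it suffices to produce one good Eulerian trail $u_1Pu_2$ in $A$ and one good Eulerian trail $v_2Qv_1$ in $B$, and then $u_1Pu_2v_2Qv_1u_1$ is already a good Eulerian circuit of $G$ with no further case analysis. There is no ``sewing of four trails.'' Lemma~\ref{lengthlemma} does play a role, but only \emph{inside} the construction of a single side's trail (cases~(c),(d) of Figure~\ref{fig:constructions_2vc}, where a common neighbour of $x$ and $y$ is split to build $A'$), not in the cross-cut combination step. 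Your proposal imports the two-trails-per-side machinery appropriate to $2$-vertex-cuts (as in Theorem~\ref{thm:no34edgecon}) where it does not apply, and thereby misses the simplification that makes the $2$-edge-cut reduction go through cleanly.
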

\begin{proof}
It is routine to check that any Eulerian trail through $F_6$ must have a subcycle of length at most 4. 

Conversely, suppose that $G$ does not contain $F_6$ as a subgraph, or is $F_6$-free. We will use induction on the number of $2$-edge-cuts, with the base case given by Theorem~\ref{thm:no34edgecon}. For any 2-edge-cut $\{xs, yt\}$ in $G$ with sides $A \ni x,y$ and $B \ni s,t$, any Eulerian circuit in $G$ can be written in the form $xPytQsx$ where $xPy$ is an Eulerian trail in $A$ and $tQs$ is an Eulerian trail in $B$. Moreover, given good Eulerian trails in $A$ and $B$, one can combine them using the form above into a good Eulerian circuit in $G$. Thus, it is enough to work with just one side of the cut at a time, say $A$. 

We proceed much as we did in the proof of Theorem~\ref{thm:no34edgecon}. Again, we will construct an auxiliary graph $A'$ from $A$ that satisfies the conditions of the induction hypothesis, but now there is the added complication of ensuring that $A'$ is $F_6$-free. To do this, we define a class of graphs $\mathcal{C}$ such that $A'$ can only contain $F_6$ if $A$ contains a maximal 3-edge-connected subgraph isomorphic to one of the graphs in $\mathcal{C}$, a \emph{$\mathcal{C}$-subgraph}. Explicitly, $\mathcal{C}$ consists of $F_6-e$ for all edges $e$, $G_7$, $G_7-e$ for each edge $e\neq xy$, and $K_4$ (Figure~\ref{fig:nearmiss}). We say that a 2-edge-cut is incident to a \emph{$\mathcal{C}$-subgraph} if one end of each edge is a vertex of the subgraph.

\begin{figure}[ht]
	\centering
	\footnotesize
	\hspace{1mm}
	\unitlength=1cm
	\scalebox{1}{
\begin{picture}(15,3.2)(0,0)
  \put(0,0.7){\includegraphics[scale=0.5]{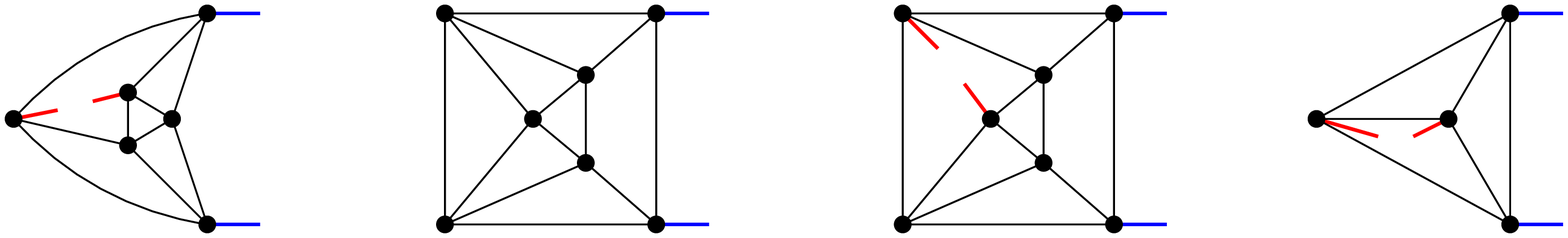}}
  \put(0.4,0){$F_6-ab$}
  \put(1.75, 2.85){$x$}
  \put(1.75, 0.45){$y$}
  \put(-0.25, 1.65){$a$}
  \put(1.7, 1.65){$c$}
  \put(1, 2.1){$b$}
  \put(1, 1.1){$d$}
  \put(4.9,0){$G_7$}
  \put(5.8, 2.85){$x$}
  \put(5.8, 0.45){$y$}
  \put(3.9, 2.85){$a$}
  \put(3.9, 0.4){$b$}
  \put(4.45, 1.65){$c$}
  \put(5.4, 2.0){$d$}
  \put(5.4, 1.3){$e$}
  \put(8.4,0){$G_7-ac$}
  \put(9.9, 2.85){$x$}
  \put(9.9, 0.45){$y$}
  \put(8, 2.85){$a$}
  \put(8, 0.4){$b$}
  \put(8.55, 1.65){$c$}
  \put(9.5, 2.0){$d$}
  \put(9.5, 1.3){$e$}
  \put(12.5,0){$K_4$}
  \put(13.5, 2.85){$x$}
  \put(13.5, 0.45){$y$}
  \put(11.5, 1.65){$v$}
  \put(13.2, 1.65){$w$}
\end{picture}}
\caption{Some graphs in $\mathcal{C}$ with incident 2-edge-cuts drawn as coloured half-edges.}
\label{fig:nearmiss}
\end{figure}

First, assume that $G$ has a $2$-edge-cut which is not incident to any $\mathcal{C}$-subgraph of $G$. Choosing such an edge-cut together with a side $A$, we break into four cases based on the local configuration around the cut as shown in Figure~\ref{fig:constructions_2vc}. In each case, we will find a good Eulerian trail through $A$.

\begin{figure}[ht]
	\centering
	\footnotesize
	\hspace{1mm}
	\unitlength=1cm
	\scalebox{1}{
\begin{picture}(13,6.4)(0,0)
  \put(0,0.7){\includegraphics[scale=0.5]{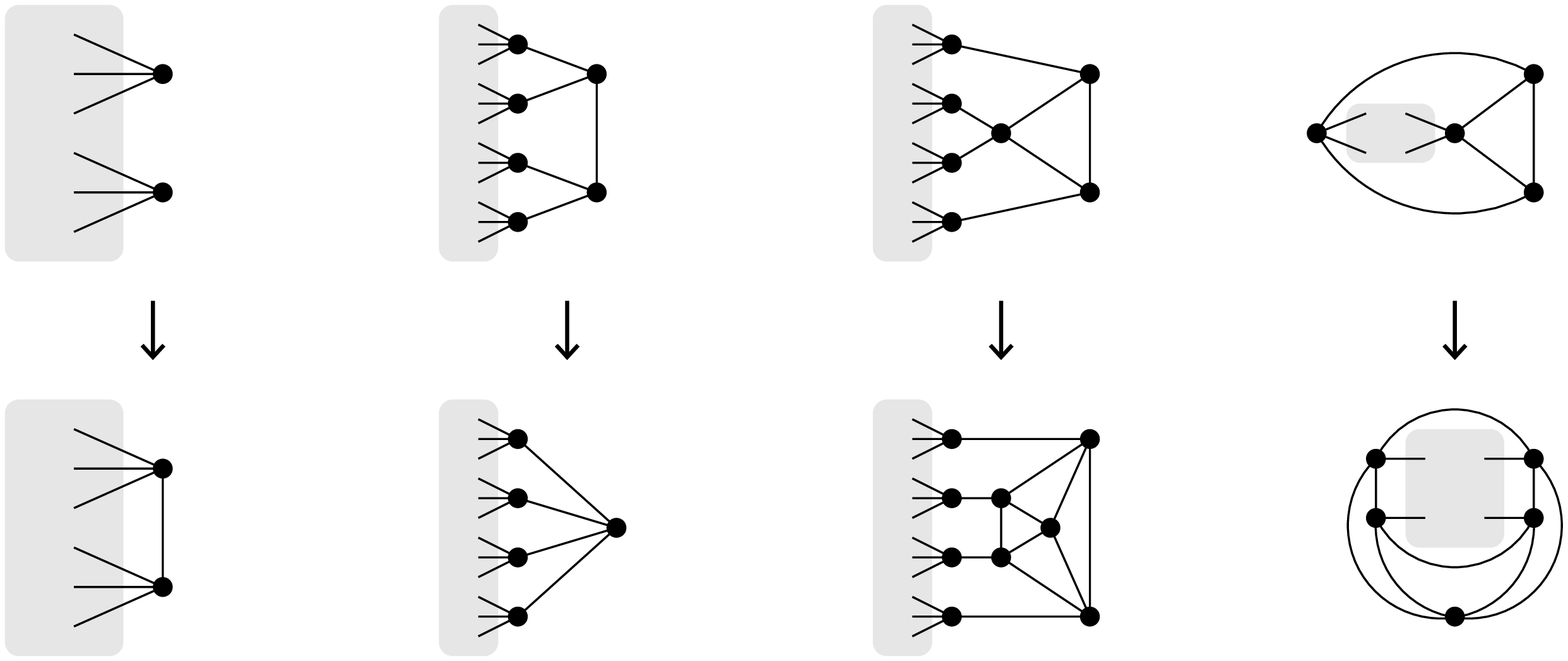}}
  \put(0.7,0){(a)}
  \put(1.2,5.55){$x$}
  \put(1.2,4.1){$y$}
  \put(1.45,2.15){$x$}
  \put(1.45,1.2){$y$}
  \put(4,0){(b)}
  \put(4.7,5.55){$x$}
  \put(4.7,4.1){$y$}
    {\scriptsize
  \put(4.2,5.75){$a_1$}
  \put(4.3,5.05){$a_2$}
  \put(4.3,4.65){$b_1$}
  \put(4.2,3.9){$b_2$}
  \put(4.05,2.65){$a_1$}
  \put(4.05,2.1){$a_2$}
  \put(4.03,1.2){$b_1$}
  \put(4.05,0.6){$b_2$}
  }
  \put(5.05,1.65){$v$}
  \put(7.7,0){(c)}
  \put(7.95,4.6){$v$}
  \put(8.65,5.55){$x$}
  \put(8.65,4.1){$y$}
  \put(8.65,2.6){$x$}
  \put(8.65,0.7){$y$}
  \put(7.85,2.15){$v_1$}
  \put(7.85,1.2){$v_2$}
   {\scriptsize
  \put(7.5,5.75){$b_1$}
  \put(7.5,5.3){$a_1$}
  \put(7.5,4.4){$a_2$}
  \put(7.5,3.9){$b_2$}
    \put(7.5,2.65){$b_1$}
  \put(7.5,2.15){$a_1$}
  \put(7.5,1.2){$a_2$}
  \put(7.5,0.7){$b_2$}
  \put(8.55,1.7){$c$}
  }
  \put(11.45,0){(d)}
  \put(12.2,5.55){$x$}
  \put(12.2,4.1){$y$}
  \put(11.6,5.1){$w$}
  \put(10.15,4.85){$v$}
  \put(10.55,2.35){$v_1$}
  \put(10.35,1.7){$v_2$}
  \put(12.45,2.35){$w_1$}
  \put(12.6,1.7){$w_2$}
\end{picture}}
\caption{The four possible sides of a 2-edge-cut and their auxiliary graphs.}
\label{fig:constructions_2vc}
\end{figure}

\begin{enumerate}[label=(\alph*)]
\item Suppose $x$ and $y$ are not adjacent. Then the graph $A'$ obtained by adding the edge $xy$ is a simple quartic planar graph. By considering the inverse construction, it is straightforward to show that $A'$ only contains $F_6$ if the chosen 2-edge-cut has two ends in a copy of $F_6-e \subset A$, so by our choice of cut $A'$ is $F_6$-free. Among a set of internally vertex-independent paths in $G$ between two vertices in $A$, at most one can traverse an edge of our chosen 2-edge-cut, and replacing an appropriate subpath (in $B$) of this path with $xy$ gives internally vertex-independent paths in $A'$. This, together with Menger's theorem, implies that $A'$ is 2-connected. We omit the details of showing that $A'$ has fewer 2-edge-cuts than $G$, but an argument similar to that in the proof of Theorem~\ref{thm:no34edgecon} suffices. Then by the induction hypothesis, there is a good Eulerian circuit in $A'$ which we can write as $xPyx$, and $xPy$ is a good Eulerian trail in $A$. 

\item Suppose $x$ and $y$ are adjacent and have no common neighbours, so $N(x) = \{a_1,a_2,y\}$ and $N(y) = \{b_1,b_2,x\}$ where all six labelled vertices are distinct. To construct $A'$ from $A$, contract $xy$ and call the new vertex $v$. As in the previous case, $A'$ is simple, quartic, planar and one can check in the same manner that it is 2-connected and has fewer 2-edge-cuts than $G$. In addition, $A'$ could only contain a copy of $F_6$ if our cut is incident to $G_7\in \mathcal{C}$ or $G_7-e\in\mathcal{C}$ which is not the case. Thus, by the induction hypothesis there is a good Eulerian circuit in $A'$. Such a circuit passes through $v$ twice, and hence contains either $a_1va_2$ and $b_1vb_2$ as subtrails, or $a_1vb_1$ and $a_2vb_2$ as subtrails after possibly relabelling $b_1$ and $b_2$. In the former case, we can write the circuit as $va_2Pb_1vb_2Qa_1v$. Then $xa_2Pb_1yb_2Qa_1xy$ is a good Eulerian trail in $A$. For the latter, the circuit in $A'$ has form either $va_1Pb_2va_2Qb_1v$ or $va_1Pa_2vb_2Qb_1v$, and we have good Eulerian trails $xa_1Pb_2yxa_2Qb_1y$ or $xa_1Pa_2xyb_2Qb_1y$ in $A$ respectively. There are no short subcycles since we have effectively just inserted an edge (namely, $xy$) into an already good circuit.

\item Suppose $x$ and $y$ are adjacent and have exactly one neighbour in common, say $v$. Let $N(v) = \{x,y,a_1,a_2\}$, $N(x) = \{s,v,y, b_1\}$ and $N(y) = \{s,v,x, b_2\}$. We know that $a_1 \neq a_2$, $b_i \neq v$ ($i=1,2$) since $A$ is simple, and $b_1 \neq b_2$ since otherwise $x$ and $y$ would have two neighbours in common. Then splitting the vertex $v$ into $v_1$ and $v_2$, we can complete this into a quartic planar graph $A'$ with new vertex $c$ which is 2-connected and has fewer 2-edge-cuts than $G$ by the same reasoning as in the previous cases. By noting that $N(x) \cap N(y) = N(v_1)\cap N(v_2)= \{c\}$, these vertices cannot all be in one copy of $F_6$ and it follows that $A'$ must be $F_6$-free. Thus, we may apply the induction hypothesis and then delete the vertices and edges added in the completion to obtain two Eulerian trails subject to Lemma~\ref{lengthlemma}. These are either of the form $v_1Pv_2$ and $xQy$ (with $|P|, |Q| \geq 5$), or else of the form $xPv_1$ and $yQv_2$ (with $|P|, |Q| \geq 3$) after possibly relabelling $v_1$ and $v_2$. In the first case, one of $xQyxvPvy$ and $xQyxvP^{-1}vy$ will be a good Eulerian trail in $A$, chosen so that $p_1$ is not $q_{-1} = b_2$. In the second case, the trail $xPvxyQvy$ works unless $p_{-1} = q_1 = b_2$. We can then let $P = P'b_2$ and $Q=b_2Q'$, so that $xP'b_2yxv{Q'}^{-1}b_2vy$ is a good trail in $A$.

\item The only remaining possibility is that $x$ and $y$ are adjacent and have two neighbours in common, say $v$ and $w$. We may assume that $vw \not\in E(G)$, since otherwise $A$ would contain a copy of $K_4$ incident to the chosen cut. Let $N(v) = \{x,y,a_1,a_2\}$ and $N(w) = \{x,y,b_1,b_2\}$, noting that $a_1\neq a_2$ and $b_1\neq b_2$ since $G$ is simple. Splitting $v$ and $w$ creates four pendant vertices, and we can complete this into a quartic planar graph $A'$. The induction hypothesis and Lemma~\ref{lengthlemma} apply in the usual manner, so we have two Eulerian trails, either $v_1Pv_2$ and $w_1Qw_2$ ($|P|, |Q| \geq 5$), or $v_1Pw_1$ and $v_2Qw_2$ ($|P|, |Q| \geq 3$). Then $xvPvyxwQwy$ and $xvPwxyvQwy$ are good Eulerian trails in $A$ respectively.
\end{enumerate}

It remains to handle graphs in which every 2-edge-cut is adjacent to a $\mathcal{C}$-subgraph. We again only work with one side at a time, say $A_0$ with vertices $x$ and $y$, and assume that $A_0$ contains a $\mathcal{C}$-subgraph $H_0$ incident to the chosen cut. If $H_0\cong G_7$, then $xacbedxybadcey$ is a good Eulerian trail. Otherwise, by inspection we observe that $H_0$ must actually be incident to another 2-edge-cut $C_1$ distinct from $C_0$. Let $A_1$ be the side of $C_1$ not containing $B_0$. If $A_1$ contains another $\mathcal{C}$-subgraph adjacent to $C_1$, say $H_1$, then it is incident to another cut $C_2$ from which we define $A_2$ to be the side not containing $B_0$. Continuing this way, by finiteness of $G$ there is some $i$ such that $C_{i}$ is incident to only one $\mathcal{C}$-subgraph, namely $H_{i-1}$. Then either $H_{i-1} \cong G_7$ in which case we have already given an explicit trail, or else a good trail can be obtained by constructing $A_i'$, which is guaranteed to be $F_6$-free, and applying the induction hypothesis using the appropriate case above.

To complete the proof, we provide a pair of good trails that cover the edges of each graph in $\mathcal{C}$ except $G_7$, as these can be stitched together to extend the good Eulerian trail in $A_i$ to all of $A_0$. We use the labelling from Figure~\ref{fig:nearmiss}.
\begin{itemize}
\itemsep=0mm
\item For $F_6-e$, if $e=xa, xb, eb$ then take the pair of trails obtained by splitting $xcbdyabxadcy$ at the missing edge, and if $e=bd$ take the trails $xbcdyab$ and $daxcy$. All other $e$ can be inferred by relabelling.
\item For $G_7-e$, split the trail $xacbedxybadcey$ at the missing edge.
\item For $K_4$, the trails $xvyw$ and $vwxy$ will do. 
\end{itemize}
\vspace{-2em}
\end{proof}

\subsection{Handling cutvertices}\label{sec:1con}

Recall the pegging operation introduced in Section~\ref{sec:3con}. There is a more general version of this operation in which we do not require $b$ and $c$ (labelled as in Figure~\ref{fig:expansions}) to be adjacent. We will call a 2-edge-cut \emph{independent} if it consists of independent edges. The edges of any independent 2-edge-cut in a planar graph have a common face, and are hence peggable in this general sense. It is easy to show that this special case of pegging, which in particular produces a cutvertex, preserves the property of having a 4-locally self-avoiding Eulerian circuit. 

\begin{lemma}[Special pegging]\label{lemma:unpegspecial}
Let $G$ be a connected quartic plane graph with an independent 2-edge-cut, say $\{ab, cd\}$. Let $G'$ be the graph obtained after pegging $ab$ and $cd$, and $x$ be the new vertex (Figure~\ref{fig:3ec_b}). If $G$ has a 4-locally self-avoiding Eulerian circuit, then so does $G'$. 
\end{lemma}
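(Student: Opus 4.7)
The strategy is to directly lift any good Eulerian circuit of $G$ to one of $G'$ by inserting $x$ into the two cut edges, and then check that no short subcycles are created. The crucial observation is that the independent 2-edge-cut $\{ab,cd\}$ forces a rigid form on the circuit. The four endpoints are distinct, and we may denote the sides of the cut by $A$ and $B$ so that $a,c\in V(A)$ and $b,d\in V(B)$; then $a,c$ each have degree $3$ in $A$ and $b,d$ each have degree $3$ in $B$. The circuit must cross the cut exactly twice (once per cut edge), so any good Eulerian circuit of $G$ takes the rigid form $C = a\,P\,c\,d\,Q\,b\,a$, where $P$ is an Eulerian $(a,c)$-trail in $A$ and $Q$ is an Eulerian $(d,b)$-trail in $B$.

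The induced Eulerian circuit in $G'$, obtained by replacing $cd$ with $cxd$ and $ab$ with $bxa$, is $C' = a\,P\,c\,x\,d\,Q\,b\,x\,a$. Since $x$ occurs exactly twice in $C'$, any subcycle of $C'$ falls into one of three cases. First, if it has $x$ at both endpoints, it coincides with one of the arcs $xaPcx$ or $xdQbx$ of lengths $|E(A)|+2$ and $|E(B)|+2$; the three edges of $A$ at $a$ alone give $|E(A)|\geq 3$ and similarly $|E(B)|\geq 3$, so each such arc has length at least $5$. Second, if $x$ occurs as a unique internal vertex, then contracting the subpath $\ldots c\,x\,d\ldots$ (or $\ldots a\,x\,b\ldots$) back to the original cut edge produces a subpath of $C$ with the same endpoints, all internal vertices still distinct, and length exactly one less, i.e., a subcycle of $C$ of length at most $3$, contradicting the goodness of $C$. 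Third, if $x$ does not appear at all, the subcycle is contained in $aPc$ or $dQb$ and is itself a subcycle of $C$, hence of length at least $5$.

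No genuine obstacle is expected. The whole argument rests on the cut-enforced rigidity, which guarantees that the two copies of $x$ in $C'$ are separated by long arcs and thereby sidesteps the extensive casework that was required in Lemma~\ref{pegging}. The only mildly delicate point is the bookkeeping in the second case, where one must verify that after deleting the unique internal occurrence of $x$, the resulting shorter subpath really is a subcycle of the original circuit rather than merely a subpath.
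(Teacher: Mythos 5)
Your proof is correct and follows the same approach as the paper's: use the rigid two-crossing form $C = aPcdQba$ forced by the independent 2-edge-cut, insert $x$ into both cut edges, and observe that the resulting circuit is good because the two arcs between the occurrences of $x$ each have length at least $5$. You additionally spell out (via contraction back to $C$) why no short subcycle can have $x$ as a unique internal vertex, a case the paper leaves implicit, but this is the same underlying argument made slightly more explicit.
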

\begin{proof}
Any Eulerian circuit in $G$ must have the form $aPdcQba$. Letting this be a good circuit, we claim that $aPdxcQbxa$ is a good circuit in $G'$. Here, $aPd$ and $cQb$ are good Eulerian trails, so any short subcycle must involve $x$. The only possibilities are $xcQbx$ and $xaPdx$ which are short subcycles if $aPd$ or $cQb$ have fewer than 3 edges. To see that this cannot be the case, we note that $a\neq d$ and $b\neq c$ since $G$ was simple, and there must be a third vertex in each side of the edge-cut as $G$ is quartic. That gives at least four edges that must in the Eulerian trail $aPd$ in one side, and $cQb$ in the other.
\end{proof}

\begin{figure}[ht]
	\footnotesize
	\centering
	\unitlength=1cm
	 \scalebox{1}{
\begin{picture}(8,2)(0,0)
  \put(0.2,0.5){\includegraphics[scale=0.5]{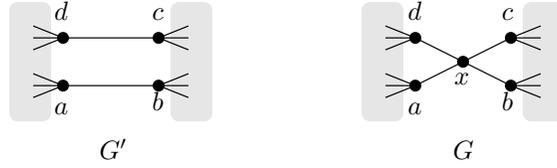}}
  \put(1.4,0){$G'$}
  \put(6.12,1){$x$}
  \put(0.8,1.85){$d$}
  \put(0.8,0.60){$a$}
  \put(2.1,1.85){$c$}
  \put(2.1,0.65){$b$}
  \put(6.1,0){$G$}
  \put(5.5,1.85){$d$}
  \put(5.5,0.60){$a$}
  \put(6.75,1.85){$c$}
  \put(6.75,0.65){$b$}
\end{picture}}
\caption{Pegging an independent 2-edge-cut produces a cutvertex.}
\label{fig:3ec_b}
\end{figure}

\begin{proof}[Proof of Theorem~\ref{thm:connected}]
It is routine to verify that every Eulerian trail through $F_6$ has a subcycle of length at most 4. For the converse, we use induction on the number of cutvertices. The base case is given by Theorem~\ref{thm:2con}. Suppose $G$ is planar and quartic with a cutvertex $x$. Let $N(x) = \{a_1,a_2,b_1,b_2\}$ with $a_1,a_2$ on one side of the cut and $b_1,b_2$ on the other. We construct an auxiliary graph $G'$ with $V(G') = V(G)-\{x\}$ and $E(G') = E(G-x)\cup\{a_1b_1,a_2b_2\}$ (labelled so that $G'$ is planar) given by unpegging at $x$. It is clear that $G'$ is connected, quartic and planar. It is $F_6$-free since $G$ was $F_6$-free and $F_6$ is 3-edge-connected whereas unpegging produces a 2-edge-cut, and simple because the new edges cannot be parallel to any edge with ends in the same component of $G-x$, nor can they be parallel to each other (this would mean $a_1x$ and $a_2x$ or $b_1x$ and $b_2x$ were parallel in $G$). Finally, we observe that $G-x$ has exactly two connected components for degree reasons, from which it follows that every cutvertex in $G'$ must be a cutvertex in $G$. With $x$ gone, $G'$ has fewer cutvertices than $G$. Thus, by the induction hypothesis, there exists a good Eulerian circuit in $G'$. Repegging the two new edges recovers $G$, so by Lemma~\ref{lemma:unpegspecial} we conclude that $G$ also admits a good circuit.
\end{proof}

\section{Cutting into paths}\label{sec:cutitup}

Given a graph $G$ together with a startvertex $v\in V(G)$ and a sequence $L=(\ell_1, \ell_2, \ldots, \ell_k)$ of natural numbers, a \emph{$P_{(L, v)}$-decomposition} of $G$ is a sequence of edge-disjoint paths such that the $i$th path $P_i$ has length $\ell_i$, the first vertex of $P_1$ and the last vertex of $P_k$ are both $v$, and the last vertex of $P_i$ is the first vertex of $P_{i+1}$ for all $1\leq i \leq k-1$. That is, the paths can be stitched end to end in sequence to obtain an Eulerian circuit. A sequence of lengths $L'$ is a \emph{subdivision} of a sequence of lengths $\underline{L}$ in which some lengths are underlined if $L'$ can be obtained from $\underline{L}$ by iteratively picking a length $\ell_i$ that is not underlined, and replacing it with two or more smaller lengths that sum to $\ell_i$. For example, $(1,2,4,4)$ and $(1,1,1,4,3,1)$ are subdivisions of $(3,\underline{4},4)$, but $(3,2,2,4)$ and $(2,2,4,3)$ are not.

By ``cutting up" a 4-locally self-avoiding Eulerian circuit, it is possible to obtain path decompositions with any startvertex and any sequence of prescribed lengths provided that the total length of the path is equal to the number of edges in our graph. Theorem~\ref{thm:connected} therefore gives us a way to decompose connected $F_6$-free quartic planar graph into paths of length at most 4. It turns out that this also extends to graphs that do contain copies of $F_6$.

\begin{lemma}\label{lem:f6path}
Let $L$ be any sequence $(\ell_1, \ell_2, \ldots, \ell_k)$ such that $\ell_i = 1,2,3,4$ for each $i$, and $\sum_i \ell_i = 11$. Then $F_6$, labelled as in Figure~\ref{fig:obstructions}, has a $P_{(L, x)}$-decomposition.
\end{lemma}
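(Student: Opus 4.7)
The plan is to prove the lemma by explicit construction, reduced to a small number of base cases via a subdivision argument.

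First, I would observe that if a $P_{(L,x)}$-decomposition exists for $L$ and $L'$ is a subdivision of $L$, then a $P_{(L',x)}$-decomposition also exists: we split the corresponding path of length $\ell$ at an internal vertex into shorter edge-disjoint paths of the prescribed lengths. Both the endpoints of the original path and the overall stitching structure are preserved. Iterating, it suffices to handle a small collection of ``base'' sequences that are maximal under subdivision, and deduce the rest by refinement.

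Given the constraints $\ell_i\in\{1,2,3,4\}$ and $\sum_i \ell_i = 11$, the base sequences have at least $\lceil 11/4 \rceil = 3$ parts. The only three-part multiset summing to 11 with entries at most 4 is $\{4,4,3\}$, and a short combinatorial check shows that certain four-part sequences such as $(3,3,3,2)$, together with some orderings of $\{4,3,3,1\}$ and $\{4,3,2,2\}$ that are not subdivisions of any ordering of $(4,4,3)$, require direct treatment. For each such base sequence $L$, I would construct an explicit $P_{(L,x)}$-decomposition of $F_6$ by picking paths directly from its 11-edge structure, leveraging the automorphism of $F_6$ that swaps the two ``halves'' of the figure while fixing $x$ to cut down the number of independent constructions that need to be exhibited.

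The main obstacle is the systematic enumeration and handling of the base cases: although each individual construction is straightforward given the small size of $F_6$, verifying the stitching property --- that consecutive paths share the correct endpoint and the full sequence begins and ends at $x$ --- demands careful case analysis, especially among orderings of the same multiset that are not related by subdivision. Organizing the enumeration by longest lengths first and invoking the symmetry of $F_6$ wherever possible should keep the argument compact, but remains the technically delicate part of the proof. Once each base sequence has been handled explicitly, the subdivision reduction closes out all remaining $L$.
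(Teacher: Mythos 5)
Your reduction --- that a $P_{(L,x)}$-decomposition refines to a $P_{(L',x)}$-decomposition whenever $L'$ is a subdivision of $L$, so it suffices to treat sequences that are maximal under subdivision --- is sound, and it is genuinely a different organisation from the paper's. The paper does not enumerate maximal sequences at all. Instead it exhibits two explicit Eulerian trails of $F_6$ from $x$ to $y$, namely $xadbxcyabcdy$ and $xadbcyabxcdy$, and observes that the second has a \emph{single} short subcycle ($bcyab$, spanning edges $4$--$7$), so cutting it fails only when $L$ has consecutive cut points at $3$ and $7$; those $L$ are precisely the ``subdivisions of $(3,\underline{4},4)$'', and the first trail (whose short subcycles sit on edges $1$--$4$ and $6$--$9$) handles them, since positions $4$ and $5$ are never cut points there. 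Two trails dispatch everything, which is considerably leaner than a base-case census.

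As written, though, your proposal has a real gap exactly where you flag it as ``to be done''. A sequence is maximal under your un-underlined subdivision iff every pair of adjacent lengths sums to at least $5$, and under the constraints $\ell_i\in\{1,2,3,4\}$, $\sum\ell_i=11$ that yields the three orderings of $\{4,4,3\}$, eighteen four-part sequences (six each from $\{4,4,2,1\}$ and $\{4,3,2,2\}$, two from $\{4,3,3,1\}$, four from $\{3,3,3,2\}$), and the five-part sequence $(1,4,1,4,1)$ --- twenty-two in all. Your list omits the $\{4,4,2,1\}$ family and the five-part case entirely, and, more importantly, none of the explicit $(x,y)$-trails through $F_6$ realising these base cases are supplied; that verification is the entire content of the lemma, and it is not ``closed out'' by the subdivision reduction. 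Also, a small point: the automorphism of $F_6$ fixing $x$ only swaps $a$ and $c$, so it does not relate $L$ to its reverse; for that you need the automorphism interchanging $x$ and $y$ composed with trail reversal. Until the base constructions are actually produced, this remains a plan rather than a proof.
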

\begin{proof}
If $L$ is a subdivision of $(3,\underline{4},4)$, then the decomposition can be obtained by cutting up the Eulerian circuit $xadbxcyabcdy$. If not, then instead cut up the Eulerian circuit $xadbcyabxcdy$, noting that the only short subcycle is $bcyab$.
\end{proof}

\begin{cor}
Let $G$ be a connected quartic planar graph with a chosen startvertex $v$, and $L$ be a sequence $(\ell_1, \ell_2, \ldots, \ell_k)$ such that $\ell_i = 1,2,3,4$ for each $i$. Then $G$ has a $P_{(L, v)}$-decomposition if and only if $\sum_i \ell_i = |E(G)|$.
\end{cor}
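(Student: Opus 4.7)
Necessity is immediate: the paths partition $E(G)$, so their total length is $|E(G)|$. For sufficiency, we induct on $|V(G)|$, with base case $|V(G)|=6$ (the octahedron, the unique smallest connected quartic planar graph). Using its vertex-transitivity, fix $v$ and verify by a short case analysis that for each admissible $L$ there is an Eulerian circuit at $v$ whose length-$\ell_i$ pieces interlace the triangles so that none coincides with a triangle of the octahedron.

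For the inductive step, first suppose $G$ is $F_6$-free. By Theorem~\ref{thm:connected} there exists a 4-locally self-avoiding Eulerian circuit $C$ of $G$. Cyclically rotate $C$ so that it begins at $v$, and cut it at the partial sums $\ell_1,\,\ell_1+\ell_2,\,\ldots$. Each resulting piece has length at most 4, and since $C$ has no subcycle of length at most 4, each piece is a simple path. Consecutive pieces share their common cut-vertex, so together they constitute a $P_{(L,v)}$-decomposition.

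If instead $G$ contains $F_6$, then since $|V(G)|>6$ the copy is induced (by the caption of Figure~\ref{fig:obstructions}). Its two degree-3 vertices $x,y$ have unique external neighbours $u,w$, yielding a 2-edge-cut $\{xu,yw\}$. Form the auxiliary graph $G'$ by deleting $V(F_6)$ from $G$ and adding a new edge $uw$; after minor adjustments in the degenerate cases $u=w$ or $uw \in E(G)$, $G'$ is a connected simple quartic planar graph on $|V(G)|-6$ vertices, so the inductive hypothesis applies. Set $v'=v$ if $v \notin V(F_6)$ and $v'=u$ otherwise, and write $L$ as a concatenation of three blocks whose middle block sums to 13; let $L'$ be obtained from $L$ by replacing this middle block with a single $1$ (representing $uw$). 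Induction provides a $P_{(L',v')}$-decomposition of $G'$, which we lift to $G$ by substituting the stand-alone $uw$-piece by the 13-edge trail consisting of $xu$, any $(x,y)$-Eulerian trail $F$ of $F_6$, and $yw$, and subdividing this trail into paths of the middle-block lengths using Lemma~\ref{lem:f6path} (after subtracting 1 from each extreme block length to account for $xu$ and $yw$). If $v \in V(F_6)$, cyclically rotate the final circuit of $G$ to begin at $v$ and re-align the cuts using Lemma~\ref{lem:f6path} together with the automorphism $a \leftrightarrow c$, $b \leftrightarrow d$, $x \leftrightarrow y$ of $F_6$. The main obstacle is that $L$ need not possess a consecutive block summing to exactly 13 (for instance, $L=(4,4,\ldots,4)$ when $|E(G)|$ is a multiple of 4); this is handled by permitting one $L$-piece to straddle the $F_6$-boundary, whose path condition is then enforced by the flexibility of Lemma~\ref{lem:f6path} combined with the choice of which end ($x$ or $y$) of $F_6$ is entered first.
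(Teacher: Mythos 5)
Your outline — pass to a smaller auxiliary graph, invoke Theorem~\ref{thm:connected} or induction, then stitch in an $F_6$-decomposition using Lemma~\ref{lem:f6path} — matches the paper's strategy. But two of the places you flag as "minor adjustments" and "the main obstacle" are exactly where the real work lies, and the paper's choices are designed to make these problems disappear rather than to be patched afterwards.

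First, your auxiliary graph $G'$ (delete $V(F_6)$ and add $uw$) is not reliably simple. If $uw \in E(G)$ you get a parallel edge, and if $u=w$ you get a vertex of degree $2$ that cannot be repaired by a single added edge; suppressing it can again create parallel edges, and the degeneracies cascade. The paper sidesteps this entirely: it replaces each copy of $F_6$ by an $F_6$-free planar gadget of order $>6$ that is quartic except for two degree-$3$ vertices, so that the boundary vertices $u,w$ and the $2$-edge-cut structure are preserved and the resulting $G'$ is automatically simple, quartic, planar, connected, and $F_6$-free (so Theorem~\ref{thm:connected} applies directly, with no further induction needed).

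Second, and more seriously, contracting the $13$-edge span $ux,\,F_6,\,yw$ to a single edge $uw$ commits you to finding a consecutive block of $L$ summing to $13$, which, as you note, need not exist (e.g.\ $L=(4,4,\ldots,4)$). Your remedy — letting one piece straddle the $F_6$ boundary — is the crux, and it is not filled in. In particular, once a piece straddles, $uw$ is no longer a standalone length-$1$ piece in $L'$, so the induction does not produce a decomposition that you can simply "blow up" at $uw$. The paper handles the straddling precisely: it cuts the Eulerian circuit $C'$ along $L$ until reaching $x$ with $n$ edges of the current piece already used, sets $L_F = (\ell_i - n, \ell_{i+1}, \ldots, \ell_j - m)$ for the next $11$ edges, applies Lemma~\ref{lem:f6path} with this $L_F$, and glues the half-pieces at $x$ and $y$. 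The resulting glued pieces really are paths because the only vertex shared by $F_6$ and $G\setminus V(F_6)$ that they can touch in common is $x$ (resp.\ $y$) — a verification your sketch omits. Without this, the straddling claim is just an assertion, and the proof has a genuine hole at its central step.
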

\begin{proof}
Necessity is clear. For the converse, if $G$ is $F_6$-free, then by Theorem~\ref{thm:connected} there exists a 4-locally self-avoiding Eulerian circuit which, starting from the specified vertex, can be cut up into the required paths.

In the following, we use the labelling of $F_6$ and the octahedron from Figure~\ref{fig:obstructions}. Suppose $G$ isomorphic to the octahedron. Then $G$ is vertex-transitive, so without loss of generality we may suppose that the startvertex is $x$. If $L$ is a subdivision of $(3, \underline{4}, 5)$ or $(5, \underline{4}, 3)$ then the required decomposition can be obtained by cutting the Eulerian circuits $xadcbxydbaycx$ or $xcyabdyxbcdax$ respectively. For all other $L$, we can use the circuit $xadbcyxbaydcx$.

All remaining graphs $G$ contain at least one copy of $F_6$ as an induced subgraph. Construct a graph $G'$ from $G$ by replacing each copy of $F_6$ with a planar graph that is $F_6$-free, quartic except for two vertices of degree 3, and of order greater than 6. By Theorem~\ref{thm:connected}, $G'$ has 4-locally self-avoiding Eulerian circuit, say $C'$. The idea is to obtain the required decomposition by following $C'$ for as long as it traverses edges common to $G'$ and $G$, which is to say until we reach a copy of $F_6$ in $G$, and cutting as we go. The preceding lemma then supplies an appropriate path-decomposition through $F_6$, before continuing back on $C'$. The remainder of the proof describes this more precisely.

Let's first handle the situation when startvertex $v\in V(G)$ is not degree 4 in any copy of $F_6$ so that it can be naturally identified with a vertex of $G'$ and hence $C'$. Then starting at $v$, we begin to partition $C'$ into paths with lengths according to $L$ until we reach a vertex, say $x$, that is contained in a copy of $F_6$. Assume also that the last complete path in the decomposition was $P_{i-1}$, and that at this point we already have the first $n$ edges of $P_{i}$ where $0 \leq n < \ell_i$. Let $L_F = (\ell_i-n, \ell_{i+1}, \ldots, \ell_j -m)$, where $0\leq m < \ell_j$, be the uniquely determined length vector that represents the next 11 edges of $L$. By Lemma~\ref{lem:f6path}, $F_6$ admits a $P_{(L_F, x)}$-decomposition. We now add all of these paths to our decomposition sequence, after concatenating the first path with those $n$ edges of $P_i$ that were already determined. This process takes us through all of the edges in a copy of $F_6$, so the next edge (the only one adjacent to $y$ in $G$ that is not yet in a path) is also an edge of $C'$. Thus, from $y$ we can again proceed to follow $C'$, with the remaining length sequence being $(m, \ell_{j+1}, \ldots, \ell_k)$.

If $v$ does have degree 4 in a copy of $F_6$, then we may assume that $v=a$ or $v=d$. If $v=a$, then since $xa$ is the first edge in both Eulerian paths in the proof of Lemma~\ref{lem:f6path}, we replace $L$ by $L'=(1, \ell_1, \ldots, \ell_k-1)$ deleting the final length if it is 0, and pick a new startvertex $v'=x$. Now we can find a $P_{(L', v')}$-decomposition of $G$. By possibly joining the first path to the end of the last according to whether $\ell_k=1$ or not, we then obtain a $P_{(L,v)}$-decomposition. Similarly, if $v=d$, then $dy$ is the last edge in both of those paths. Here, we let $L'$ be $(\ell_1-1, \ell_2, \ldots \ell_k, 1)$ deleting the first length if it is 0, and $v' = y$.
\end{proof}
As a special case, taking $L$ to be a sequence of all 4s proves Theorem~\ref{thm:p5decomp}.

\section{Further questions}
In terms of locally self-avoiding Eulerian circuits, Theorem~\ref{thm:connected} is the best possible result for $F_6$-free quartic planar graphs in the sense that graphs in this class may contain $G_7$ (Figure~\ref{fig:nearmiss}), and every Eulerian path through $G_7$ has a subcycle of length at most 5. However, there is already a gap if we return to 3-connected quartic planar graphs. In that case, the family of graphs obtained by gluing an odd number of copies of the graph in Figure~\ref{fig:bbgraphs}(a), communicated by Brendan McKay and Be\'ata Faller, all have the property that every Eulerian circuit contains a subcycle of length at most 8. We have shown that 4-locally self-avoiding Eulerian circuits exist with only one exception for this class, so it remains to pinpoint the greatest $k$ such that all but finitely many 3-connected quartic planar graphs admit a $k$-locally self-avoiding Eulerian circuit.

\begin{figure}[ht]
	\footnotesize
	\centering
	\unitlength=1cm
	 \scalebox{1}{
\begin{picture}(14.5,2.9)(0,0)
\put(0,0.8){\includegraphics[scale=0.25]{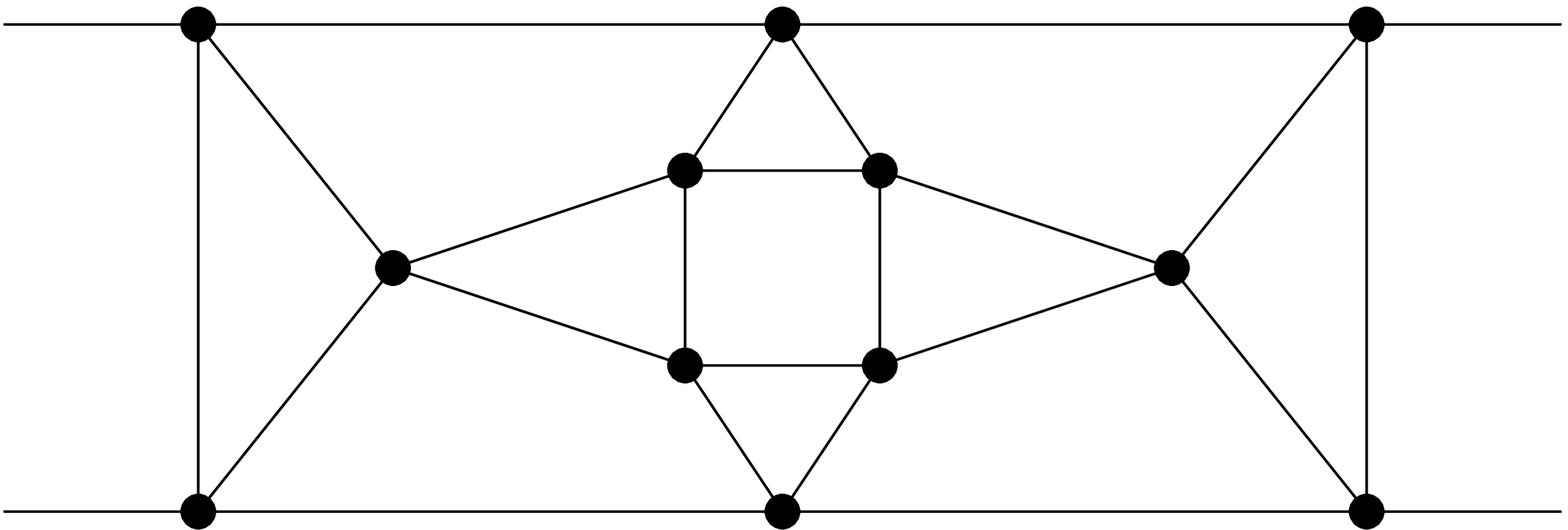}}
  \put(6.3,0.4){\includegraphics[scale=0.10]{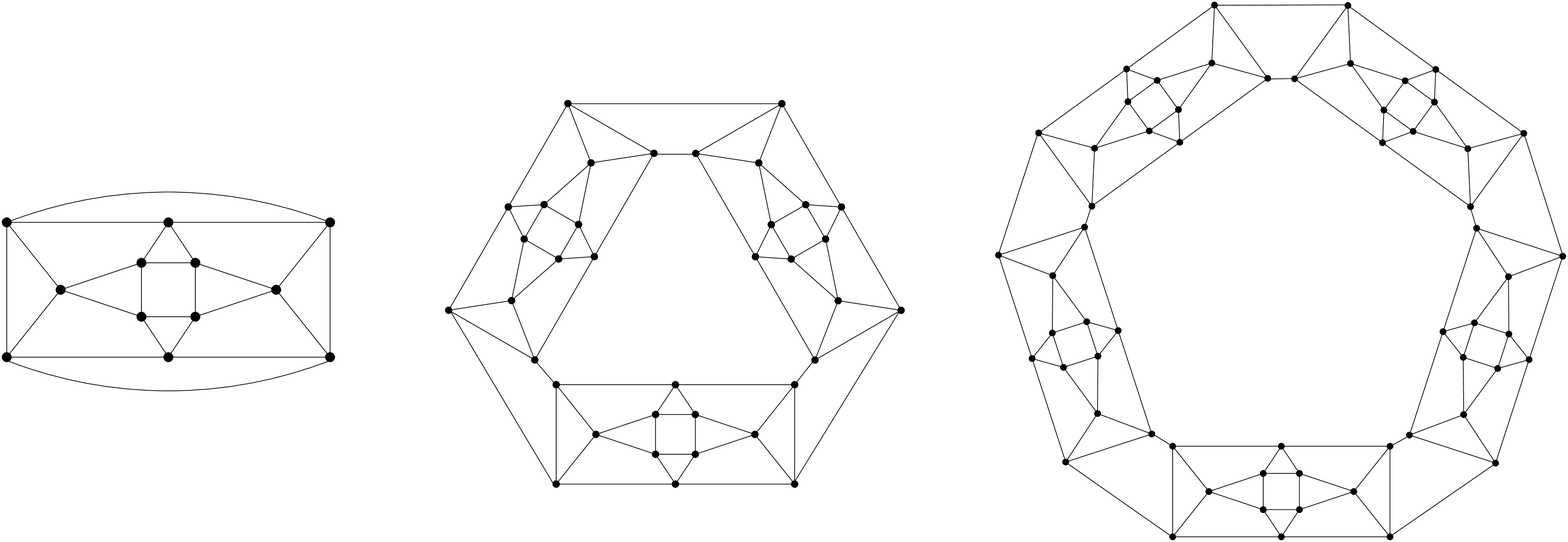}}
  \put(14.2,1.5){\normalsize{\textbf{$\cdots$}}}
  \put(2.35,0){(a)}
  \put(10.1,0){(b)}
\end{picture}}
\caption{A gadget (a) used to construct the family of graphs depicted in (b), in which every Eulerian circuit has a subcycle of length at most 8.}
\label{fig:bbgraphs}
\end{figure}

Another natural question is to ask for the largest $k$ such that for all but finitely many quartic planar graphs, possibly with some connectivity conditions, the trivial divisibility condition is also sufficient for admitting a $P_k$-decomposition. For connected quartic planar graphs, Theorem~\ref{thm:p5decomp} says that $k\geq 5$. We can also deduce that $k< 7$ since for any graph in the class that contains $F_6$, a $P_7$-decomposition would require a path on 7 vertices in $F_6$.

Finally, it would be nice to drop planarity from Theorem~\ref{thm:connected}. That is, which quartic graphs admit a 4-locally self-avoiding Eulerian circuit? In addition to $F_6$, the obstruction $K_5-e$ is known from the characterisations by Adelgren~\cite{Ade95} and Heinrich, Liu and Yu~\cite{HLY99} in the 3-locally self-avoiding case. There are already recursive generation theorems for this class (see \cite{DKS10, BJF83}), but to avoid having to prove an analogue of Lemma~\ref{pegging} without planarity, it may be necessary to use different expansion operations.

 \section*{Acknowledgements}
I wish to thank Brendan McKay, Scott Morrison, and Catherine Greenhill for their guidance, generous support, and helpful comments. I would also like to thank the University of New South Wales for hosting me during part of this work.

\bibliography{refs}
\bibliographystyle{abbrv}

\end{document}